\DeclareMathOperator{\Gal}{Gal}
\DeclareMathOperator{\GL}{GL}
\DeclareMathOperator{\GSp}{GSp}
\DeclareMathOperator{\GU}{GU}
\DeclareMathOperator{\GO}{GO}
\DeclareMathOperator{\PGL}{PGL}
\DeclareMathOperator{\End}{End}
\DeclareMathOperator{\Aut}{Aut}
\DeclareMathOperator{\Ind}{Ind}
\newcommand{\ZZ}{\mathbb{Z}}
\newcommand{\QQ}{\mathbb{Q}}
\newcommand{\PP}{\mathbb{P}}
\newcommand{\CC}{\mathbb{C}}
\newcommand{\C}{\mathcal{C}}
\newcommand{\A}{\mathbb{A}}
\newcommand{\Tl}{\mathrm{T}_\ell}
\newcommand{\Vl}{\mathrm{V}_\ell}
\newcommand{\rhoA}{\rho_{A,\lambda}}
\newcommand{\rhoAt}{\widetilde{\sigma_{A,\lambda}}}
\newtheorem{theorem}{Theorem}[section]
\newtheorem{proposition}[theorem]{Proposition}
\newtheorem{lemma}[theorem]{Lemma}
\newtheorem{corollary}[theorem]{Corollary}
\newtheorem{definition}[theorem]{Definition}
\newtheorem{remark}[theorem]{Remark}
\newcommand{\lmfdbgenustwo}[3]{\href{http://www.lmfdb.org/Genus2Curve/Q/#1/#2/#1/#3}{#1-#2-#3}}
\begin{document}
\title{K-varieties and Galois representations}

\author{Enric Florit}

\address{Departament de Matemàtiques i Informàtica, Universitat de
  Barcelona, Gran via de les Corts Catalanes, 585, 08007 Barcelona,
  Spain}
\email{enricflorit@ub.edu}

\author{Ariel Pacetti}

\address{Center for Research and Development in Mathematics and
  Applications (CIDMA), Department of Mathematics, University of
  Aveiro, 3810-193 Aveiro, Portugal} \email{apacetti@ua.pt}
\thanks{Funded by the Portuguese Foundation for
  Science and Technology (FCT) Individual Call to Scientific Employment Stimulus
  (https://doi.org/10.54499/2020.02775.CEECIND/CP1589/CT0032). This
  work was supported by CIDMA and is funded by the FCT, under grant
  UIDB/04106/2020 (https://doi.org/10.54499/UIDB/04106/2020). Florit was supported by the Spanish Ministry of Universities (FPU20/05059) and by grants PID2022-137605NB-I00 and 2021 SGR 01468.}

\keywords{$K$-varieties, Galois representations}
\subjclass[2020]{11G10,14K15,11F80}

\begin{abstract}
  In a remarkable article Ribet showed how to attach rational
  $2$-dimensional representations to elliptic $\QQ$-curves. An abelian
  variety $A$ is a (weak) $K$-variety if it is isogenous to all of its
  $\Gal_K$-conjugates. In this article we study the problem of
  attaching an absolutely irreducible $\ell$-adic representation of
  $\Gal_K$ to an abelian $K$-variety, which sometimes has smaller
  dimension than expected. When possible, we also construct a
  Galois-equivariant pairing, which restricts the image of this
  representation. As an application of our construction, we prove
  modularity of abelian surfaces over $\QQ$ with potential
  quaternionic multiplication.
\end{abstract}

\maketitle

\section*{Introduction}
Let $L$ be a number field, and let $A/L$ be an abelian variety of
dimension $g$. Given a prime number $\ell$, the Tate module $T_\ell A$
is obtained as the inverse limit (over all positive integers $n$) of
the subgroup of $\ell^n$-torsion points of $A$. Since addition is a
rational map, the Tate module acquires a structure of
$\ZZ_\ell[\Gal_L]$-module. Let
$\Vl A:= \Tl A\otimes_{\ZZ_\ell}\QQ_\ell$ be the usual extension
of scalars, a $\QQ_\ell$-vector space of dimension $2g$. The action of
$\Gal_L$ on $\Vl A$ gives the usual $2g$-dimensional Galois
representation $\rho_{A,\ell}:\Gal_L \to \GL_{2g}(\QQ_\ell)$. Let
$\End^0(A)$ denote the $\QQ$-algebra of endomorphisms of $A$ defined
over $L$. A well known result states that if $\End^0(A)$ is larger
than $\QQ$, then there exists a subrepresentation $\rho_{A,\lambda}$
of $\rho_{A,\ell}$ obtained by considering $\Vl A$ as an
$\End^0(A)\otimes_\QQ \QQ_\ell$-module (we recall this construction  in
\S~\ref{section:endomorphisms}).

Let $L/K$ be a Galois extension of number fields. The variety $A$ is
called a (weak) $K$-variety if it is $L$-isogenous to all of its
$\Gal(L/K)$-conjugates, namely for all $\sigma \in \Gal(L/K)$ there
exists an isogeny $\mu_\sigma: {}^\sigma A \to A$. The prototypical
example of a $K$-variety is that of an elliptic curve $A$ which is
isogenous to all of its Galois conjugates (the so called $\QQ$-curves)
as studied by Ribet in \cite{MR2058653}. The main goal of the present
article is to study the following question.

\vspace{3pt}
{\bf \noindent Question:} What Galois representations can
be naturally attached to a (weak) $K$-variety?
\vspace{3pt}

The reason to study $K$-varieties is that although the variety $A$ is
defined over $L$, its Galois representation (up to a twist) should
extend to an $n$-dimensional Galois representation of $\Gal_K$ for
some $n \le 2\dim(A)$ (this is indeed the case, as proven in
Theorem~\ref{thm:irreducibility-dimension}). In the particular case of
$\dim(A)=1$ (i.e. $A$ being an elliptic curve) without complex
multiplication, this result was proven by Ribet in \cite{MR2058653}.
Existence of endomorphisms makes the situation more subtle.  It is
important to remark that as part of the Langlands program, we also
expect the resulting Galois representation of $\Gal_K$ to match an
automorphic representation of $\GL_n(\A_K)$. In the case of
$\QQ$-curves, this follows from Serre's conjectures (as described in
\cite{MR2058653}).

An abelian variety $A/L$ is called a \emph{strong} $K$-variety if
furthermore for all $\sigma \in \Gal(L/K)$ the isogeny $\mu_\sigma:{}^\sigma A\to A$ satisfies a natural commutation
relation with all endomorphisms defined over $L$, namely 
\[
  \mu_\sigma {}^\sigma \varphi = \varphi \mu_\sigma \; \forall \sigma \in \Gal(L/K), \; \forall \varphi \in \End(A_L).
  \]
  We want to emphasize that sometimes in the literature, the notion of
  a $K$-variety involves the stronger condition that the
  previous relation holds for all
  $\varphi \in \End(A_{\overline{\QQ}})$. Strong $K$-varieties (with the more restrictive definition) were
  studied in \cite{guitart12}, in analogy to the $\QQ$-curves
  situation. It is not hard to prove that if $A/L$ is a strong
  $K$-variety of dimension $g$ with endomorphism ring $\ZZ$, then a
  twist of $\rho_{A,\ell}$ can be extended to a $2g$-dimensional
  Galois representation of $\Gal_K$ (see
  Theorem~\ref{theorem:extension}). In some instances (see
  Remark~\ref{remark:strong-bilinear}), the extended representation
  also preserves a non-degenerate bilinear form.

  In general, the $\ell$-adic representation $\rho_{A,\ell}$ of the
  Galois group $\Gal_L$ has an irreducible subrepresentation
  $\rho_{\lambda}$ (coming from the endomorphism ring). Then we can
  prove (Theorem~\ref{thm:strong-field}) the existence of a maximal
  subfield $L' \subset L$ such that $A$ is a strong $L'$-variety. This
  allows to extend the representation $\rho_{A,\lambda}$ to a
  representation $\widetilde{\rho_{\lambda}}$ of $\Gal_{L'}$ of the
  same dimension (well defined up to twists by characters of
  $\Gal_{L'}$). To obtain a representation of $\Gal_K$ one can just
  consider its induction from $\Gal_{L'}$ to $\Gal_K$ (as done in
  \cite{MR4381226} for strong abelian varieties of $\GL_2$-type over
  $\overline{\QQ}$).  As an example, let $E/\QQ$ be an elliptic curve
  with complex multiplication by an imaginary quadratic field $K$. The
  curve $E/K$ is clearly a $K$-elliptic curve, though it is not a
  strong $K$-elliptic curve (which is consistent with the fact that it
  does not have a $1$-dimensional representation of $\Gal_\QQ$
  attached to it). The representation attached to $E/K$ decomposes as
  the sum of two $1$-dimensional representations (coming from a Hecke
  character $\chi$ and its complex conjugate), and the induction of
  $\chi$ from $\Gal_K$ to $\Gal_\QQ$ matches $\rho_{E,\ell}$.

  Our construction is well suited to study how the Galois
  representation of $A$ behaves while extending scalars from $L$ to a
  field $M$ where it gets extra endomorphisms.  In
  Theorem~\ref{thm:dimension} we compare the dimension of the
  different constructions, which do not always match. We present two
  interesting applications of our construction: one of them is related
  to abelian fourfolds whose $4$-dimensional Galois representation is
  contained in $\GU_4(\QQ_\ell) \cap \GSp_4(F_\lambda)$, for $F$ an
  imaginary quadratic field, and $\lambda$ a prime of $F$ dividing
  $\ell$. Our second application proves that any abelian surface over
  $\QQ$ with potential quaternionic multiplication (QM for short)
  comes from a Siegel modular form of weight $2$.

  The article is organized as follows:
  Section~\ref{section:endomorphisms} recalls some well known facts on
  irreducible constituents $\rho_{\lambda}$ of Galois representations
  attached to abelian varieties in terms of its endomorphism
  ring. Section~\ref{section:pairings-and-Albert-types} recalls
  results on the existence of non-degenerate, bilinear pairings
  (symplectic, symmetric or hermitian) invariant under the action of
  $\rho_{\lambda}$. Section~\ref{section:K-varieties} contains the
  main results of the article. We start studying strong $K$-varieties
  $A/L$. Our main result (Theorem~\ref{theorem:extension}) proves that
  the representation $\rho_{\lambda}$ (after possible a twist) can be
  extended to a representation $\widetilde{\rho_{\lambda}}$ of
  $\Gal_K$ of the same dimension. In
  Section~\ref{subsection:general-k-varieties} we study general
  $K$-varieties. We prove that the group $\Gal(L/K)$ has a natural
  action on the center of the endomorphism algebra. The kernel of the
  action fixes a subextension $L'$, and it happens that the variety
  $A$ is a strong $L'$-variety (see
  Theorem~\ref{thm:strong-field}). Then the previous result applies
  and provides an extension of our representation to $\Gal_{L'}$. In
  Section~\ref{subsection:dimension} we study how the dimension our
  constructed representation varies when we enlarge the base field
  $L$. In Theorem~\ref{thm:dimension2} we prove that in some cases the
  dimension halves (depending on how many new extra endomorphisms
  the variety gains).

Section~\ref{section:pairings} gives the necessary (and sufficient)
condition for the existence of an invariant pairing for the
constructed representation extending that of $\rho_\lambda$. The last
section contains some applications (including the stated modularity result of abelian
surfaces with potential QM). 

\vspace{3pt}
\noindent{\bf Notation:} for the reader's convenience, we include some
notation used during the article.
\begin{itemize}
\item $L$ denotes a number field and $\overline{L}$ an algebraic closure of it.
  
\item $A$ denotes a simple abelian variety defined over $L$.
  
\item $R = \End(A_L)$ and $D = \End^0(A) = R \otimes_\ZZ \QQ$. $E$ denotes the center
  of $D$ and $E^+$ its maximal totally real subfield.
  
\item $d$ denotes the Schur index of $\End^0(A)$.
  
\item If $\varphi \in D$ and $\sigma \in \Gal_K$, we denote by
  ${}^\sigma \varphi$ the morphism obtained by applying $\sigma$ to
  the coefficients of the endomorphism.
\end{itemize}

\vspace{2pt}
\noindent{\bf Acknowledgments:} We would like to thank Alex Bartel,
Francesc Fité and Xevi Guitart for many helpful conversations as well
as many suggestions that improved the quality of the article.

\section{Endomorphisms and splitting}
\label{section:endomorphisms}
Let $L$ be a number field, and let $A/L$ be a simple abelian variety of dimension $g$. Let
$\End^0(A) := \End(A_L)\otimes_\ZZ \QQ$ be its endomorphism algebra, a
division algebra with center a number field $E$. Then
$\dim_E \End^0(A) = d^2$, where the number $d$ is called the Schur
index of $\End^0(A)$ (see \cite[\S13]{MR0674652}).
\begin{theorem}\label{theorem:albert}
  The endomorphism algebra falls in one of the following types in the Albert
  classification:
\begin{itemize}
\item Type {\rm I}. $E=\End^0(A)$ (so $d=1$); $E$ is a totally real number
  field and $[E:\QQ]\mid g$.
\item Type {\rm II}. $E$ is a totally real number field, $\End^0(A)$
  is a totally indefinite quaternion algebra over $K$ (so $d=2$) and
  $2[E:\QQ]\mid g$.
\item Type {\rm III}. $E$ is a totally real number field, $\End^0(A)$
  is a totally definite quaternion algebra over $K$ (so $d=2$) and
  $2[E:\QQ]\mid g$.
\item Type {\rm IV}. $E$ is a CM field, $\End^0(A)$ is a division algebra and $\frac{[E:\QQ]}{2}d^2\mid g$.
\end{itemize}
\end{theorem}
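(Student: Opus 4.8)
The plan is to derive the classification from the existence of the \emph{Rosati involution}. Fix a polarization of $A$; it induces on $D = \End^0(A)$ an anti-involution $x \mapsto x'$ which is \emph{positive}, meaning $\Tr_{D/\QQ}(x x') > 0$ for every nonzero $x \in D$, where $\Tr_{D/\QQ}$ denotes $\Tr_{E/\QQ}$ composed with the reduced trace. Since $A$ is simple, $D$ is a division algebra, so the problem reduces to classifying the division algebras of finite dimension over $\QQ$ that admit a positive anti-involution, and then reading off the numerical constraints imposed by the $D$-action on $H_1(A,\QQ)$.

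First I would study the restriction of $'$ to the center $E$. Set $E_0 = \{x \in E : x' = x\}$; either $E_0 = E$ (the \emph{first kind}) or $[E:E_0] = 2$ (the \emph{second kind}). Extending scalars to $\mathbb{R}$, a positive anti-involution must remain positive on every archimedean completion, and an involution acting trivially on $\mathbb{C}$ cannot be positive, since then $\Tr_{\mathbb{C}/\mathbb{R}}(i \cdot i') = \Tr_{\mathbb{C}/\mathbb{R}}(-1) = -2 < 0$. In the first-kind case this forces $E = E_0$ to be totally real; in the second-kind case it forces $E$ to be a CM field with maximal totally real subfield $E_0 = E^+$, and $'|_E$ to be complex conjugation.

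The heart of the argument is the local analysis over $\mathbb{R}$ of the pair $(D \otimes_\QQ \mathbb{R}, ')$, which decomposes as a product of pairs $(D_v, ')$ indexed by the real places $v$ of $E^+$. Each $D_v$ is a matrix algebra over $\mathbb{R}$, $\mathbb{C}$ or $\mathbb{H}$, and the positive anti-involutions on such algebras are classified: up to conjugation they are $X \mapsto \overline{X}^{\,t}$ (orthogonal type on $M_n(\mathbb{R})$, unitary type on $M_n(\mathbb{C})$, symplectic type on $M_n(\mathbb{H})$ with quaternionic conjugation), and crucially no symplectic-type positive involution exists on $M_{2k}(\mathbb{R})$ and no orthogonal-type one on $M_k(\mathbb{H})$. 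Combined with Albert's theorem that a central division algebra over a number field admitting an involution of the first kind has exponent dividing $2$ — hence, by Albert--Brauer--Hasse--Noether, Schur index $d$ dividing $2$ — this yields: in the first-kind case either $D = E$ (Type I), or $D$ is a quaternion algebra over $E$ whose Rosati is of orthogonal type, which forces $D \otimes_\QQ \mathbb{R} \cong M_2(\mathbb{R})^{[E:\QQ]}$, i.e.\ $D$ totally indefinite (Type II), or $D$ is a quaternion algebra whose Rosati is of symplectic type, which forces $D \otimes_\QQ \mathbb{R} \cong \mathbb{H}^{[E:\QQ]}$, i.e.\ $D$ totally definite (Type III); in the second-kind case $D$ is an arbitrary division algebra over the CM field $E$ (Type IV). I expect the main obstacle to be writing this matching cleanly, that is, showing that the algebraic type of the involution (orthogonal versus symplectic) pins down the splitting behaviour of the quaternion algebra at every real place.

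Finally I would extract the divisibility conditions. Being a division algebra, $D$ makes the faithful module $H_1(A,\QQ)$ free of some rank $m$, so $2g = \dim_\QQ H_1(A,\QQ) = m\, d^2 [E:\QQ]$. For Type IV this reads $g = m\, d^2 [E^+:\QQ]$ since $[E:\QQ] = 2[E^+:\QQ]$, giving $\frac{[E:\QQ]}{2} d^2 \mid g$; for Types II and III, $d = 2$ gives $2g = 4m[E:\QQ]$, so $2[E:\QQ] \mid g$. For Type I the count only yields $[E:\QQ] \mid 2g$, and the missing factor of $2$ comes from the complex structure $J$ presenting $A$ as a complex torus: $H_1(A,\mathbb{R})$ is a module over $E \otimes_\QQ \mathbb{R} \cong \mathbb{R}^{[E:\QQ]}$ commuting with $J$, hence splits into $[E:\QQ]$ mutually isomorphic $J$-stable real subspaces, each necessarily of even dimension; therefore $m$ is even, say $m = 2n$, and $g = n[E:\QQ]$, i.e.\ $[E:\QQ] \mid g$.
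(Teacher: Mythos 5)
Your proposal is correct and is essentially the same argument as the paper's, which simply cites Theorem 2 of \cite[\S 21]{mumford74}: that proof is exactly the Albert--Rosati classification you sketch (positivity of the Rosati involution, first versus second kind on the center, the local archimedean classification of positive involutions together with exponent-two for involutions of the first kind, and the free-module dimension count refined for Type I by the $J$-stability of the $E\otimes\mathbb{R}$-eigenspaces). The one step you flag as delicate is handled in that source by noting that the orthogonal/symplectic type of a first-kind involution is preserved under scalar extension, so positivity at each real place forces the splitting behaviour uniformly, just as you indicate.
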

\begin{proof}
  See for example Theorem 2 of \cite[\S~21]{mumford74}.
\end{proof}
\begin{remark}
  Even when $A_{\bar L}$ is simple, it is perfectly possible that $A_L$
  and $A_{\bar L}$ have different Albert types. For example a
  rational curve with complex multiplication by an imaginary quadratic
  field $K$ has Albert type {\rm I} over $\QQ$, but type {\rm IV} over $K$. The
  same phenomenon occurs in higher dimensions, for example with the Jacobian of
  the genus 2 curve with LMFDB label \lmfdbgenustwo{20736}{a}{1} (with complex
  multiplication over $\QQ(\zeta_9)^+$).
\end{remark}

Fix a prime number $\ell$.  Let
$\Vl(A):=\Tl(A)\otimes_{\ZZ_\ell} \QQ_\ell$ be the rational Tate
module. Fix a polarization $\phi:A \to A^\vee$. The Weil pairing and
the choice of polarization gives a non-degenerate alternating pairing
\[
  \Phi: \Vl A\times \Vl A\to \QQ_\ell.
\]
This pairing is compatible with the action of Galois: for
$\sigma\in\Gal_L$, $u,v\in\Vl A$, we have
\begin{equation}
  \label{eq:Weil}
  \Phi({}^\sigma u,{}^\sigma v) = {}^\sigma
  \Phi(u,v)=\chi_\ell(\sigma) \Phi(u,v)
\end{equation}
with $\chi_\ell:\Gal_L\to\QQ_\ell^\times$ the $\ell$-adic cyclotomic
character. Another way to state property (\ref{eq:Weil}) is to say
that $\Phi$ is $\Gal_L$-equivariant with similitude character
$\chi_\ell$. The pairing (and its $\Gal_L$-invariance) allow to
construct a continuous Galois representation
\[
  \rho_{A,\ell}:\Gal_L\to\GSp(\Vl(A),\Phi),
\]
with similitude character $\chi_\ell$, the $\ell$-th cyclotomic
character. This representation is not irreducible in general.
To ease notation, let $R := \End(A_L)$ and let
$D:=\End^0(A) = R\otimes_\ZZ \QQ$. When $D$ is non-commutative define the
ramification set
\begin{equation}
  \label{eq:ramification}
  \mathrm{Ram}(D)=\{ \lambda\text{ prime of }E\ \colon D\otimes_E
  E_\lambda\not\simeq M_d(E_\lambda) \}.
\end{equation}
To avoid multiple statements, when $D=E$ set
$\mathrm{Ram}(D)=\emptyset$. In both cases, $\mathrm{Ram}(D)$ is a
finite set with even cardinality \cite[\S18.5]{MR0674652}. The algebra
$\End^0(A)$ acts on $\Vl A$, so we can consider it as an
$\End^0(A)\otimes_\QQ \QQ_\ell$-module, and in particular as an
$E\otimes_\QQ \QQ_\ell$-module.  There is a (categorical product)
decomposition
\begin{equation}
  \label{eq:E-decomp}
E\otimes_\QQ\QQ_\ell \simeq \bigtimes_{\lambda\mid\ell}E_\lambda,  
\end{equation}
where $\lambda$ runs over the primes of $E$ over $\ell$. Let
$e_\lambda$ be the idempotent corresponding to $E_\lambda$ in (\ref{eq:E-decomp}) and set
$\mathrm{V}_\lambda := e_\lambda\cdot \Vl A$. Then
\[
  \Vl A\simeq \bigoplus_{\lambda\mid\ell} \mathrm{V}_\lambda.
\]
This is an isomorphism of $E\otimes_\QQ \QQ_\ell[\Gal_L]$-modules, since
$\End^0(A)$ is the algebra of endomorphisms defined over $L$, so
they commute with the action of the Galois group $\Gal_L$.

\begin{lemma}
  We have
  $\dim_{E_\lambda}\mathrm{V}_\lambda=\frac{2g}{[E:\QQ]}$, which is
  independent of $\ell$.
\end{lemma}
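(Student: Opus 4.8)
The plan is to trace the claimed dimension back to the fact that $\Vl A$ is, compatibly with the $D$-action, the $\ell$-adic completion of a $\QQ$-rational $D$-module, and then to exploit that modules over a division ring behave essentially like vector spaces.

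Concretely, I would fix an embedding $L\hookrightarrow\CC$ and set $H := H_1(A(\CC),\QQ)$, a $\QQ$-vector space of dimension $2g$ carrying a left $D = \End^0(A)$-module structure coming from the functoriality of homology under endomorphisms of $A$. Since $D$ is a division algebra, $H$ is a free $D$-module; let $m$ be its rank. Comparing $\QQ$-dimensions and using $\dim_\QQ D = d^2[E:\QQ]$ gives $2g = m\,d^2[E:\QQ]$, so that $m = \frac{2g}{d^2[E:\QQ]}$ (a positive integer, by the divisibilities in Theorem~\ref{theorem:albert}); in particular $m$ depends only on $A$.

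Next, for each prime $\ell$ there is a natural isomorphism $\Vl A\simeq H\otimes_\QQ\QQ_\ell$ — for a complex torus $\CC^g/\Lambda$ one has $\Tl A\simeq\Lambda\otimes_\ZZ\ZZ_\ell$, compatibly with endomorphisms — and this is an isomorphism of $D\otimes_\QQ\QQ_\ell$-modules because every endomorphism in $D$ is defined over $L$. Hence $\Vl A$ is free of rank $m$ over $D\otimes_\QQ\QQ_\ell$. Applying the idempotent $e_\lambda$ and noting that $e_\lambda\cdot(D\otimes_\QQ\QQ_\ell) = D\otimes_E E_\lambda$, the direct summand $\mathrm{V}_\lambda = e_\lambda\cdot\Vl A$ is a free $D\otimes_E E_\lambda$-module of rank $m$. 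Since $\dim_{E_\lambda}(D\otimes_E E_\lambda) = d^2$, we conclude $\dim_{E_\lambda}\mathrm{V}_\lambda = m\,d^2 = \frac{2g}{[E:\QQ]}$, which is visibly independent of $\ell$ (and of the choice of $\lambda\mid\ell$).

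I do not expect a genuine obstacle: the two structural inputs are standard — a finitely generated module over a division ring is free with well-defined rank, and the identification $\Vl A\simeq H_1(A(\CC),\QQ)\otimes_\QQ\QQ_\ell$ is elementary for complex tori and manifestly $\End^0(A)$-equivariant. If one prefers to avoid choosing a complex embedding, one can argue entirely $\ell$-adically: write $D\otimes_\QQ\QQ_\ell\simeq\prod_{\lambda\mid\ell}(D\otimes_E E_\lambda)$, a product of central simple $E_\lambda$-algebras acting faithfully on $\Vl A$. The only slightly delicate point in that route is to see that each $\mathrm{V}_\lambda$ is free over $D\otimes_E E_\lambda$ — equivalently, that the multiplicities of the simple summands are the same for every $\lambda$ — which again comes down to $\Vl A$ being extended from a $D$-module; for that reason the homological argument above is the most economical.
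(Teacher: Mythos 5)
Your argument is correct, and it is in substance the proof behind the paper's one-line citation: the paper refers to Ribet's Theorem~2.1.1 in \cite{MR0457455}, whose proof is exactly this comparison of $\Vl A$ with the Betti realization $H_1(A(\CC),\QQ)\otimes_\QQ\QQ_\ell$, compatibly with the action of the endomorphism algebra. The only stylistic difference is that you route the count through the full division algebra $D$: this is harmless but unnecessary, since $H_1(A(\CC),\QQ)$ is already an $E$-vector space of dimension $\frac{2g}{[E:\QQ]}$, so $\Vl A\simeq H_1(A(\CC),\QQ)\otimes_\QQ\QQ_\ell$ is free over $E\otimes_\QQ\QQ_\ell$ of that rank, and applying $e_\lambda$ gives $\dim_{E_\lambda}\mathrm{V}_\lambda=\frac{2g}{[E:\QQ]}$ directly; your extra step via $D$-freeness just recovers the same number as $m\,d^2$. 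Note also that no Galois equivariance of the comparison isomorphism is needed (and none is claimed), since the statement is purely about $E_\lambda$-dimensions, so your proof is complete as written.
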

\begin{proof}
	This is Theorem 2.1.1 in \cite{MR0457455}, the argument works the same since $\mathrm{V}_\lambda$ is a free $E_\lambda$-module.
\end{proof}

\begin{lemma}
  We have
  $\End_{E_\lambda[\Gal_L]}(\mathrm{V}_\lambda)=\End^0(A)\otimes_E
  E_\lambda$. Furthermore, if $\End^0(A)=E$ is commutative, each
  $\mathrm{V}_\lambda$ is an absolutely irreducible representation of
  $\Gal_L$.
\end{lemma}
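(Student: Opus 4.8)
The plan is to derive both assertions from Faltings' theorem, which asserts that $\Vl A$ is a semisimple $\QQ_\ell[\Gal_L]$-module and that the natural map $\End^0(A)\otimes_\QQ\QQ_\ell\to\End_{\QQ_\ell[\Gal_L]}(\Vl A)$ is an isomorphism. Write $D=\End^0(A)$. Since $E$ is the \emph{center} of $D$, the idempotents $e_\lambda$ of the decomposition (\ref{eq:E-decomp}) are central in $D\otimes_\QQ\QQ_\ell$, and multiplication by them decomposes $D\otimes_\QQ\QQ_\ell$ into a (categorical) product of central simple $E_\lambda$-algebras $D_\lambda:=D\otimes_E E_\lambda$, compatibly with the decomposition $\Vl A=\bigoplus_{\lambda\mid\ell}\mathrm{V}_\lambda$.

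For the first assertion, $e_\lambda$ is a central idempotent of the ring $\End_{\QQ_\ell[\Gal_L]}(\Vl A)$ cutting out the direct summand $\mathrm{V}_\lambda$, so
\[
  \End_{\QQ_\ell[\Gal_L]}(\mathrm{V}_\lambda)=e_\lambda\cdot\End_{\QQ_\ell[\Gal_L]}(\Vl A)=e_\lambda\cdot(D\otimes_\QQ\QQ_\ell)=D_\lambda.
\]
Under this identification the ring of scalars $E_\lambda$ sits in $D_\lambda$ as its center; in particular every $\QQ_\ell[\Gal_L]$-endomorphism of $\mathrm{V}_\lambda$ already commutes with the $E_\lambda$-action, so
\[
  \End_{E_\lambda[\Gal_L]}(\mathrm{V}_\lambda)=\End_{\QQ_\ell[\Gal_L]}(\mathrm{V}_\lambda)=D_\lambda=\End^0(A)\otimes_E E_\lambda,
\]
which is the claim. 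Equivalently, one may argue symmetrically: $\End_{(E\otimes_\QQ\QQ_\ell)[\Gal_L]}(\Vl A)$ is the centralizer of $E\otimes_\QQ\QQ_\ell$ inside $\End_{\QQ_\ell[\Gal_L]}(\Vl A)=D\otimes_\QQ\QQ_\ell$, hence all of $D\otimes_\QQ\QQ_\ell$ because $E\otimes_\QQ\QQ_\ell$ is its center, and then split along the idempotents $e_\lambda$.

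Suppose now that $D=E$ is commutative. Then $D_\lambda=E_\lambda$, so $\End_{\QQ_\ell[\Gal_L]}(\mathrm{V}_\lambda)=E_\lambda$. As a direct summand of the semisimple $\QQ_\ell[\Gal_L]$-module $\Vl A$, the module $\mathrm{V}_\lambda$ is itself semisimple over $\QQ_\ell[\Gal_L]$; but a semisimple module whose endomorphism ring is a division ring (here the field $E_\lambda$) is necessarily simple, so $\mathrm{V}_\lambda$ is an irreducible $\QQ_\ell[\Gal_L]$-module, and hence an irreducible $E_\lambda[\Gal_L]$-module (an $E_\lambda[\Gal_L]$-submodule being in particular a $\QQ_\ell[\Gal_L]$-submodule). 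Its commutant as an $E_\lambda[\Gal_L]$-module is contained in $\End_{\QQ_\ell[\Gal_L]}(\mathrm{V}_\lambda)=E_\lambda$, hence equals $E_\lambda$. By the Jacobson density theorem applied to the simple module $\mathrm{V}_\lambda$ with commutant $E_\lambda$, and since $\dim_{E_\lambda}\mathrm{V}_\lambda<\infty$, the $E_\lambda$-subalgebra of $\End_{E_\lambda}(\mathrm{V}_\lambda)$ generated by the image of $\Gal_L$ is all of $\End_{E_\lambda}(\mathrm{V}_\lambda)$; this property is inherited by every scalar extension $E_\lambda\hookrightarrow F$, whence $\mathrm{V}_\lambda\otimes_{E_\lambda}F$ remains irreducible and $\mathrm{V}_\lambda$ is absolutely irreducible.

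There is no genuine obstacle beyond invoking Faltings' theorem; granting it, the rest is formal manipulation with central idempotents together with the standard equivalence ``simple with commutant equal to the ground field $\iff$ absolutely irreducible''. The point most worth keeping in view is the role of the hypothesis that $E$ is the \emph{center} of $\End^0(A)$: this is exactly what makes the centralizer of $E\otimes_\QQ\QQ_\ell$ equal to all of $D\otimes_\QQ\QQ_\ell$, and hence what pins down $\End_{E_\lambda[\Gal_L]}(\mathrm{V}_\lambda)$.
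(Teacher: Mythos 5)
Your proof is correct and follows essentially the same route as the paper: invoke Faltings' semisimplicity and isogeny theorem (\cite[Satz 4]{MR0718935}), cut out $\mathrm{V}_\lambda$ with the central idempotent $e_\lambda$ to identify $\End_{E_\lambda[\Gal_L]}(\mathrm{V}_\lambda)$ with $\End^0(A)\otimes_E E_\lambda$, and conclude absolute irreducibility in the commutative case from the criterion ``semisimple with commutant equal to the ground field''. You merely spell out the details (the idempotent bookkeeping and the Burnside/Jacobson density step) that the paper's one-line proof leaves implicit.
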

\begin{proof}
  By \cite[ Satz 4]{MR0718935}, $\Vl A$ is a semisimple
  $\QQ_\ell[\Gal_L]$-module and
  $\End_{\Gal_L}(\Vl A)\simeq \End^0(A)\otimes_\QQ \QQ_\ell$. Therefore
  each $\mathrm{V}_\lambda$ is semisimple and satisfies
  $\End_{E_\lambda[\Gal_L]}(\mathrm{V}_\lambda)=\End^0(A)\otimes_E
  E_\lambda$.  Hence when $\End^0(A)=E$, $\mathrm{V}_\lambda$ is
  simple over $E_\lambda$, and so it is absolutely irreducible.
\end{proof}

When $\End^0(A)$ is non-commutative, the module $\mathrm{V}_\lambda$
cannot be simple. The reason is that
$\End^0(A)\otimes_E E_\lambda \simeq M_d(E_\lambda)$ for
$\lambda\not\in\mathrm{Ram}(D)$, and this ring has $d$ orthogonal
idempotents that further break down $\mathrm{V}_\lambda$. Since we
want the simple factors in a decomposition of $\mathrm V_\lambda$ to
be nondegenerate with respect to a certain pairing, we introduce them
in a separate section.

\section{Pairings and irreducible constituents}
\label{section:pairings-and-Albert-types}

\begin{definition}
  Let $K$ be a field of characteristic different from 2 and let $V$ be
  a finite dimensional $K$-vector space. A nondegenerate biadditive
  form
    \[
    \Psi:V\times V\to K
    \]
    is called
    \begin{itemize}
    \item \emph{symplectic}, if it is $K$-bilinear and $\Psi(v,v)=0$
      for all $v\in V$.
    \item \emph{symmetric}, if it is $K$-bilinear and
      $\Psi(v,w)=\Psi(w,v)$ for all $v,w\in V$.
    \item \emph{hermitian}, if $K$ has an automorphism
      $\bar\cdot:K\to K$ which is an involution, and satisfies that
      $\Psi$ is $K$-linear in the first entry and
    \[
    \Psi(v,w)=\overline{\Psi(w,v)}
    \]
    for all $v,w\in V$.
    \end{itemize}
  \end{definition}

\begin{definition}
  Suppose that $A$ either has Albert type {\rm I}, {\rm II} or {\rm III}. Let $\ell$ be a
  rational prime and let $\lambda\mid\ell$ in $\mathcal O_E$. The
  prime $\lambda$ has \emph{Property (P)} if $\lambda\not\in\mathrm{Ram}(D)$.
\end{definition}

Denote by $x\mapsto x'$ the Rosati involution of $D=\End^0(A)$,
which is obtained from the polarization $\phi:A\to A^\vee$.

\begin{lemma}
  Let $A$ have Albert type {\rm IV}. Let $E^+$ be the maximal totally real
  subfield of $E$. Then there exist a finite Galois extension $L/E^+$
  containing $E$, an element $\gamma\in D$ with $\gamma'=\gamma$ and
  an $L$-algebra isomorphism
  \[
    s: D\otimes_E L\overset{\sim}{\to} M_d(L)
  \]
  such that the positive involution
  $x\mapsto x^*:=\gamma x'\gamma^{-1}$ on $s(D)$ is the restriction of
  the involution $X\mapsto X^*:=\overline{X}^{\top}$ on $M_d(L)$.
\end{lemma}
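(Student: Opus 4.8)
The plan is to combine the structure theory of central simple algebras with the positivity of the Rosati involution. As $A$ has Albert type IV, the Rosati involution $x\mapsto x'$ acts on the center $E$ as the nontrivial automorphism of $E/E^{+}$ (complex conjugation), so $(D,\,x\mapsto x')$ is a division algebra of degree $d^{2}$ over $E$ carrying a positive involution of the second kind relative to $E/E^{+}$. We want to split $D$ over a well-chosen field and identify a twist of the Rosati involution with the conjugate--transpose involution of $M_{d}$.

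First I would build the field $L$. Take a maximal subfield $F\subset D$; then $[F:E]=d$ and $F$ splits $D$, and the Galois closure of $F$ over $\QQ$ is a number field which is Galois over $\QQ$ (hence over $E^{+}$), contains $E$, and still splits $D$. Fixing an embedding of this field into $\CC$, complex conjugation restricts to an involution $\overline{\;\cdot\;}$ on it which, because $E$ is CM, restricts in turn to complex conjugation on $E$; call this field $L$. Choose any $L$-algebra isomorphism $s_{0}\colon D\otimes_{E}L\overset{\sim}{\to}M_{d}(L)$ and extend the Rosati involution to the $\overline{\;\cdot\;}$-semilinear involution $x\otimes\ell\mapsto x'\otimes\overline{\ell}$ of $D\otimes_{E}L$; transporting through $s_{0}$ gives a $\overline{\;\cdot\;}$-semilinear involution $\tau$ of $M_{d}(L)$. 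Since $\tau$ and $J\colon X\mapsto{}^{t}\overline{X}$ have the same semilinearity, $\tau\circ J$ is an $L$-linear algebra automorphism of $M_{d}(L)$, hence inner by Skolem--Noether: $\tau(X)=G\cdot{}^{t}\overline{X}\cdot G^{-1}$ for some $G\in\GL_{d}(L)$. The relation $\tau^{2}=\mathrm{id}$ forces $G^{-1}\,{}^{t}\overline{G}$ to be a scalar of norm $1$ for $L/L^{\overline{\;\cdot\;}}$, so Hilbert 90 lets us rescale $G$ so that ${}^{t}\overline{G}=G$; that is, $\tau$ is the adjoint involution of the Hermitian form on $L^{d}$ with Gram matrix $G$.

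Now the positivity of the Rosati involution enters. Base changing to the archimedean places, at each complex place of $E$ the resulting involution on $M_{d}(\CC)$ is adjunction by a positive definite Hermitian matrix, and comparing with $G$ shows that $G$ is (up to sign) positive definite at every archimedean place of $L^{\overline{\;\cdot\;}}$ lying below a complex place of $L$. It follows that after enlarging $L$ further --- still keeping it Galois over $E^{+}$ and stable under complex conjugation, which is possible because the relevant square roots one adjoins are real --- the Hermitian form with Gram matrix $G$ becomes isometric to the standard one, i.e.\ $G=Q\cdot{}^{t}\overline{Q}$ for some $Q\in\GL_{d}(L)$. Replacing $s_{0}$ by $s:=\mathrm{int}(Q)^{-1}\circ s_{0}$ then transports the Rosati involution itself to $J$, so the lemma holds with $\gamma=1$; allowing a general $\gamma\in D$ with $\gamma'=\gamma$ (necessarily totally positive, so that $x\mapsto\gamma x'\gamma^{-1}$ is again a positive involution) simply encodes the option of not enlarging $L$ all the way and instead absorbing the residual positive definite Hermitian matrix into $\gamma$.

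The step I expect to be the main obstacle is translating ``positive involution'' into ``positive definite Gram matrix at all the relevant archimedean places'' and checking that the field extensions used to trivialize the resulting Hermitian form can be carried out without destroying the constraints that $L$ be Galois over $E^{+}$ and carry an involution extending complex conjugation on $E$; the remaining ingredients (Skolem--Noether, Hilbert 90, and diagonalization of Hermitian forms) are routine.
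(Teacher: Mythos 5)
A preliminary remark: the paper does not prove this lemma at all --- its ``proof'' is a citation to Lemma~2.1 of \cite{bk-IV} --- so your proposal is attempting strictly more than the paper does, and the only question is whether your argument stands on its own. Its skeleton does: extending the Rosati involution $\overline{\,\cdot\,}$-semilinearly to $D\otimes_E L$, transporting it through a splitting $s_0$, and using Skolem--Noether plus Hilbert~90 to realize it as the adjoint involution of a Hermitian matrix $G$ is the standard reduction and is carried out correctly; likewise the translation of positivity into definiteness of $G$ at the archimedean places of $L^{\overline{\,\cdot\,}}$ that ramify in $L$ is sound (a positive involution of the second kind on $M_d(\CC)$ is adjunction by a positive definite Hermitian matrix, and two Hermitian matrices with the same adjoint involution differ by a real scalar, whence only ``definite up to sign'').

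The gap is exactly the step you defer, and your one-line justification for it is not right as stated. Since the sign of definiteness of $G$ may differ from one ramified real place to another, a diagonalization $\langle a_1,\dots,a_d\rangle$ can have $a_i<0$ at some such place, so ``the relevant square roots one adjoins are real'' fails, and no enlargement of $L$ can produce $G=Q\,\overline{Q}^{\top}$ as long as that place stays real. Two standard repairs are available: (a) first rescale $G$ by an element of $(L^{\overline{\,\cdot\,}})^{\times}$ with prescribed signs at the finitely many ramified real places (harmless, since the Gram matrix of an adjoint involution is only defined up to such scalars; weak approximation provides the element), then kill the remaining finite-place obstruction via Landherr's Hasse principle for Hermitian forms after a quadratic extension making the discriminant a norm; or (b) adjoin $\sqrt{a_i}$ declared fixed by the involution, which makes each $a_i$ a norm and simultaneously turns every real place where $a_i<0$ into a complex (hence split, hence harmless) place. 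In either case one must still do the bookkeeping you flag --- pass to a Galois closure over $\QQ$, re-embed into $\CC$, and check that the new complex conjugation extends the chosen involution --- which is routine but has to be written. Finally, the closing assertion that a general $\gamma$ ``absorbs the residual positive definite Hermitian matrix'' is incorrect as stated: $\gamma$ must be a symmetric element of $D$ itself, and an arbitrary Hermitian matrix in $\GL_d(L)$ need not be of the form $s(\gamma)$; fortunately this remark is not needed, since establishing the statement with $\gamma=1$ (which your strategy, once repaired as above, does) is already an instance of the lemma.
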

\begin{proof}
	This is Lemma~2.1 in \cite{bk-IV}.
\end{proof}

\begin{definition}
  Suppose $A$ has Albert type {\rm IV} with $d>1$. Let $\ell$ be a rational
  prime, let $\lambda\mid\ell$ in $\mathcal O_E$, and let
  $\lambda_0:=\lambda\cap \mathcal O_{E^+}$. The prime $\lambda$ has
  \emph{Property (P)} if it satisfies the following properties:
	\begin{itemize}
        \item $\lambda\not\in\mathrm{Ram}(D)$.
        \item $\lambda$ is inert over $\lambda_0$, and $\lambda$
          splits in $\mathcal O_L$.
	\end{itemize}
\end{definition}

\begin{remark}
As explained in \cite[pg. 1250]{bk-IV}, when $A$ has Albert type {\rm IV} there is a positive density of primes $\lambda$ of $E$ with property (P). 
\end{remark}

\begin{theorem}
\label{theorem:pairing-existence}
Let $A/L$ be a simple abelian variety of dimension $g$, and let
$n = \frac{2g}{d[E:\QQ]}$. Let $\ell$ be a prime number with Property
(P), and let $\lambda\mid \ell$ in $E$. Then there exist an absolutely
irreducible $E_\lambda[\Gal_L]$-module
$\mathrm{W}_\lambda$ of rank $n$, and a pairing
$\Psi_\lambda:\mathrm{W}_\lambda\times\mathrm{W}_\lambda\to
E_\lambda$, satisfying the following properties:
\begin{itemize}
\item The pairing $\Psi_\lambda$ is non-degenerate and $E_\lambda$-linear in the first coordinate.
\item The pairing is symplectic if $A$ is of type {\rm I} or {\rm II}.
  
\item The pairing is symmetric if $A$ is of type {\rm III}.

\item The pairing is hermitian if $A$ is of type {\rm IV}.

\item The form $\Psi$ is invariant (up to a similitude character) under
  the Galois action: for $u,v\in\mathrm{W}_\lambda$ and
  $\sigma\in\Gal_L$,
  $\Psi_\lambda({}^\sigma u,{}^\sigma
  v)={}^\sigma\Psi_\lambda(u,v)=\chi_\ell(\sigma)\Psi_\lambda(u,v)$.
  
\item There is an isomorphism
  \[
    V_\lambda \simeq \mathrm{W}_\lambda^{\oplus d}
  \]
  of $E_\lambda[\Gal_L]$-modules. Furthermore, if every
  prime ideal $\lambda\mid\ell$ of $E$ has Property (P), then
  there is a decomposition
\[
	\Vl A \simeq \bigoplus_{\lambda'\mid\ell}\mathrm{W}_{\lambda'}^{\oplus d}
\]
of $E\otimes_\QQ \QQ_\ell[\Gal_L]$-modules.
\end{itemize}
\end{theorem}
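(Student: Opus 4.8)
The plan is to build $\mathrm{W}_\lambda$ by cutting $\mathrm{V}_\lambda$ with a primitive idempotent of $D\otimes_E E_\lambda \simeq M_d(E_\lambda)$ (using Property (P)), transport the Weil pairing $\Phi$ through this idempotent, and then check that the Rosati involution controls the symmetry type according to the Albert classification. First I would fix $\lambda$ with Property (P) and recall from the previous section that $\mathrm{V}_\lambda$ is a semisimple $E_\lambda[\Gal_L]$-module with $\End_{E_\lambda[\Gal_L]}(\mathrm{V}_\lambda) = D\otimes_E E_\lambda$. For types I, II, III, Property (P) says $\lambda\notin\mathrm{Ram}(D)$, so $D\otimes_E E_\lambda \simeq M_d(E_\lambda)$ ($d=1$ or $2$); for type IV with $d>1$ the extra conditions (inert over $\lambda_0$, split in $\mathcal O_L$) ensure both that $D\otimes_E E_\lambda\simeq M_d(E_\lambda)$ and that the involution $x\mapsto x^*=\gamma x'\gamma^{-1}$ of the Lemma becomes, after the isomorphism $s$, conjugate-transpose on $M_d(E_\lambda)$ with respect to the quadratic unramified extension giving the CM structure on $E_\lambda$ — this is exactly what makes the resulting pairing hermitian rather than split. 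Let $e$ be a primitive idempotent of $M_d(E_\lambda)$ and set $\mathrm{W}_\lambda := e\cdot\mathrm{V}_\lambda$. Since the idempotents $e$ (for the $d$ columns) are $\Gal_L$-stable (they lie in $D\otimes_E E_\lambda$, which commutes with $\Gal_L$) and pairwise conjugate in $M_d(E_\lambda)$, one gets $\mathrm{V}_\lambda\simeq\mathrm{W}_\lambda^{\oplus d}$ as $E_\lambda[\Gal_L]$-modules, and $\End_{E_\lambda[\Gal_L]}(\mathrm{W}_\lambda) = e(D\otimes_E E_\lambda)e = E_\lambda$, so $\mathrm{W}_\lambda$ is absolutely irreducible; its rank is $\tfrac1d\dim_{E_\lambda}\mathrm{V}_\lambda = \tfrac{2g}{d[E:\QQ]} = n$.

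Next I would construct $\Psi_\lambda$. The Weil pairing $\Phi$ restricts to a non-degenerate pairing $\Phi_\lambda$ on $\mathrm{V}_\lambda$ (the $\mathrm{V}_{\lambda'}$ for $\lambda'\neq\lambda$ are in the radical or one pairs $\lambda$ against $\bar\lambda$; in the commutative-center situation one restricts to a self-dual piece), $E_\lambda$-bilinear up to the $E_\lambda/\QQ_\ell$ structure, and Galois-equivariant with similitude $\chi_\ell$ by \eqref{eq:Weil}. The adjoint of any $\varphi\in D$ with respect to $\Phi$ is its Rosati conjugate $\varphi'$; hence with respect to the involution $*$ twisted by $\gamma$ (type IV) or directly by $'$ (types I–III) the idempotent $e$ has a well-behaved adjoint, and one can choose $e$ so that $e^* = e$ (a "symmetric" primitive idempotent — a diagonal matrix unit $E_{11}$ after $s$). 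Then $\Psi_\lambda := \Phi_\lambda|_{\mathrm{W}_\lambda\times\mathrm{W}_\lambda}$ (possibly precomposed with $\gamma$ in the type IV case to make it land in the fixed field correctly) is non-degenerate on $\mathrm{W}_\lambda$ because $e^*=e$ means $\mathrm{W}_\lambda$ is not isotropic, and it inherits Galois equivariance with similitude $\chi_\ell$. The symmetry type then follows from the standard dictionary (see the Albert classification and the pairing results recalled in this section): types I and II give a symplectic form, type III a symmetric form, and type IV a hermitian form for the involution $\bar\cdot$ on $E_\lambda$ coming from complex conjugation on the CM field $E$ — precisely the content of the Lemma above combined with the "inert over $\lambda_0$" hypothesis in Property (P). Finally, when every $\lambda\mid\ell$ has Property (P), applying the above to each $\lambda$ and using $\Vl A\simeq\bigoplus_{\lambda\mid\ell}\mathrm{V}_\lambda$ gives $\Vl A\simeq\bigoplus_{\lambda\mid\ell}\mathrm{W}_\lambda^{\oplus d}$ as $E\otimes_\QQ\QQ_\ell[\Gal_L]$-modules.

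The main obstacle is the type IV case: one must verify that the restriction of $\Phi$ to $\mathrm{W}_\lambda$ is genuinely hermitian and non-degenerate, which requires the interplay between the Rosati involution, the element $\gamma$ and the isomorphism $s$ of the Lemma to behave well after the base change to $E_\lambda$ — and this is where the two conditions in Property (P) ($\lambda$ inert over $\lambda_0$ so that $E_\lambda/E^+_{\lambda_0}$ is the relevant quadratic extension, and $\lambda$ split in $\mathcal O_L$ so that $s$ descends to $E_\lambda$) are used in an essential way. One also has to make sure the chosen primitive idempotent can be taken $*$-symmetric, which is a small linear-algebra fact about positive involutions on matrix algebras but needs to be stated carefully. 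For types I–III the argument is comparatively routine once $D\otimes_E E_\lambda\simeq M_d(E_\lambda)$ is known, since the Rosati involution on a matrix algebra over a field with a symplectic or symmetric involution is conjugate to the standard transpose-type involution, and cutting by a symmetric matrix unit produces the advertised symplectic (for the canonical involution of even type) or symmetric form on the $n$-dimensional piece.
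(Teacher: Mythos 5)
Note first that the paper itself does not prove this theorem: its ``proof'' is a citation (Theorem~5.4 of the reference for types I--II, Theorem~3.23 of the reference for type III, and Chi resp.\ the type-IV reference for the CM case), so your proposal is really an attempt to reprove those results. Your strategy --- cut $\mathrm{V}_\lambda$ by a primitive idempotent of $D\otimes_E E_\lambda\simeq M_d(E_\lambda)$, transport the Weil pairing, and read off the symmetry type from the Rosati involution --- is indeed the strategy of those references, and the module-theoretic half of your argument is correct: semisimplicity plus $\End_{E_\lambda[\Gal_L]}(\mathrm{V}_\lambda)\simeq M_d(E_\lambda)$ gives $\mathrm{V}_\lambda\simeq \mathrm{W}_\lambda^{\oplus d}$ with $\End_{E_\lambda[\Gal_L]}(\mathrm{W}_\lambda)=E_\lambda$ and $\dim_{E_\lambda}\mathrm{W}_\lambda=n$, and the final global decomposition follows formally.

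There is, however, a genuine gap in your construction of $\Psi_\lambda$, concentrated in type III. For a totally definite quaternion algebra the Rosati involution is the canonical involution, which is of \emph{symplectic} type and is not conjugate to transpose on $M_2(E_\lambda)$; for any primitive idempotent $e$ one has $e'=1-e$ (its reduced trace is $1$), so no $*$-symmetric primitive idempotent exists, contrary to your claim, and $\mathrm{W}_\lambda=e\mathrm{V}_\lambda$ is totally isotropic for the transported Weil form --- the restriction you propose is identically zero, not a nondegenerate symmetric form. The correct route (as in the cited type-III paper) is a tensor decomposition $\mathrm{V}_\lambda\simeq \mathrm{W}_\lambda\otimes_{E_\lambda}H$ with $\dim H=2$, factoring the $E_\lambda$-alternating form as a symmetric form on $\mathrm{W}_\lambda$ tensored with an alternating form on $H$, equivalently pairing $e\mathrm{V}_\lambda$ against $(1-e)\mathrm{V}_\lambda$ through a $D$-isomorphism between them; this symmetry ``swap'' is precisely what your restriction mechanism cannot produce. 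Beyond this, the two steps you defer are exactly the substantive content of the cited theorems: (i) producing an $E_\lambda$-valued form from the $\QQ_\ell$-valued Weil pairing (orthogonality of the $\mathrm{V}_{\lambda'}$ under the Rosati action on $E$ and the trace-form descent; in type IV the Rosati involution induces complex conjugation on $E$, which is where ``inert over $\lambda_0$'' enters), and (ii) in type IV, upgrading the resulting form to a genuinely hermitian nondegenerate one via $\gamma$ and the splitting $s$ --- you yourself flag (ii) as the main obstacle without resolving it. So as written the proposal establishes the decomposition statement but not the existence and typing of $\Psi_\lambda$ in types III and IV.
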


\begin{proof}
  This is a summary of \cite[Theorem~5.4]{bgk-I-II} and
  \cite[Theorem~3.23]{bgk-III}. If $A$ has Albert type I, then this
  follows from \cite[\S~2.3]{MR1120395} if $d=1$, and from
  \cite[Theorem~1.1]{bk-IV} when $d>1$.
\end{proof}

\section{$K$-varieties}
\label{section:K-varieties}

\begin{definition}
  Let $L/K$ be an extension of number fields and let $A$ be an abelian
  variety defined over $L$. The variety $A$ is called a
  \emph{$K$-variety} if for every $\sigma\in \Gal_K$ there exists an
  isogeny $\mu_\sigma:{}^\sigma A\to A$. The variety $A$ is called a
  \emph{strong $K$-variety} if, in addition, for every $\varphi\in \End(A_L)$
  the following equality holds
  \begin{equation}
    \label{eq:strong-k-var}
  \mu_\sigma {}^\sigma \varphi = \varphi \mu_\sigma.
  \end{equation}
\end{definition}
\begin{remark}
  If $L'/L$ is a field extension, and $A/L$ is a $K$-variety, then the
  variety $A/L'$ is also a $K$-variety. This is not true for strong
  $K$-varieties. For example if $E$ is an elliptic curve over $\QQ$
  with CM by an imaginary quadratic field $K$, then $E/\QQ$ is clearly
  a strong $\QQ$-curve, but $E/K$ is not.
\end{remark}

\subsection{Strong $K$-varieties}
Let $A$ be a strong $K$-variety defined over a number field
$L$. Following the notation of
Theorem~\ref{theorem:pairing-existence}, let $\rho_{\lambda}$ be the
Galois representation attached to $A/L$.

\begin{theorem}\label{theorem:extension}
  Suppose that the extension $L/K$ is Galois. Then there exists a
  finite order character $\psi:\Gal_L \to \overline{\QQ_\ell}^\times$
  such that the twisted representation $\rho_{\lambda} \otimes \psi$
  extends to a representation of $\Gal_K$.
\end{theorem}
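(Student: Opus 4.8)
The plan is to use the isogenies $\mu_\sigma$ to build, for each $\sigma \in \Gal(L/K)$, an automorphism of the representation space $\mathrm{W}_\lambda$ that intertwines $\rho_\lambda$ with its conjugate $\rho_\lambda^\sigma$, and then to patch these into a genuine representation of $\Gal_K$ at the cost of a finite-order twist, following Ribet's strategy for $\QQ$-curves. First I would fix, for each $\sigma \in \Gal(L/K)$, an isogeny $\mu_\sigma : {}^\sigma A \to A$; the strong $K$-variety condition \eqref{eq:strong-k-var} guarantees that the induced map on Tate modules is $\End^0(A) \otimes \QQ_\ell$-linear, hence respects the idempotent decomposition \eqref{eq:E-decomp} and the further splitting $\mathrm{V}_\lambda \simeq \mathrm{W}_\lambda^{\oplus d}$, so it descends to an isomorphism of $E_\lambda[\Gal_L]$-modules $\mathrm{W}_\lambda^\sigma \xrightarrow{\sim} \mathrm{W}_\lambda$ (here $\mathrm{W}_\lambda^\sigma$ denotes $\mathrm{W}_\lambda$ with the twisted Galois action). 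Concretely this means there is an invertible matrix $c(\sigma) \in \GL_n(E_\lambda)$ with $\rho_\lambda({}^\sigma\tau\sigma^{-1}\text{-lift}) \ldots$; more cleanly, choosing an extension $\widetilde\sigma \in \Gal_K$ of each $\sigma$, the map $\tau \mapsto c(\sigma)^{-1}\rho_\lambda(\widetilde\sigma\,\tau\,\widetilde\sigma^{-1})c(\sigma)$ equals $\rho_\lambda(\tau)$ for $\tau \in \Gal_L$, by absolute irreducibility of $\mathrm{W}_\lambda$ and Schur's lemma the intertwiner $c(\sigma)$ is unique up to $E_\lambda^\times$-scalar.

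Next I would define a candidate extension. For $g \in \Gal_K$, write $g = \widetilde\sigma\,\tau$ with $\sigma = g|_L$ and $\tau \in \Gal_L$, and set $\widetilde{\rho_\lambda}(g) := \rho_\lambda(\tau)\,c(\sigma)$. Checking the cocycle identity $\widetilde{\rho_\lambda}(g_1 g_2) = \widetilde{\rho_\lambda}(g_1)\widetilde{\rho_\lambda}(g_2)$ reduces, via the intertwining property above, to the statement that the function $c(\sigma_1)c(\sigma_2) c(\sigma_1\sigma_2)^{-1}$ is a scalar in $E_\lambda^\times$; this is exactly where Schur's lemma (absolute irreducibility, from Theorem~\ref{theorem:pairing-existence}) is used, and it produces a $2$-cocycle $\xi \in Z^2(\Gal(L/K), E_\lambda^\times)$ (with trivial action, if the scalars are chosen Galois-equivariantly — otherwise one is careful about the module structure). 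The obstruction to $\widetilde{\rho_\lambda}$ being an honest homomorphism is the class $[\xi] \in H^2(\Gal(L/K), E_\lambda^\times)$.

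The final step is to kill this obstruction by a twist. Since $\Gal(L/K)$ is finite, $[\xi]$ is a torsion class; a standard argument (inflation to $\Gal_K$, the fact that $H^2$ of a profinite group with coefficients in $\overline{\QQ_\ell}^\times$ is handled by finiteness of the relevant cohomology, exactly as in Ribet's treatment of $\QQ$-curves, or by Tate's theorem that $H^2(\Gal_K, \overline{\QQ}^\times)$ has no nontrivial elements of a given order after suitable adjustment) shows that after enlarging scalars to $\overline{\QQ_\ell}$ the cocycle $\xi$ becomes a coboundary of a function $\psi : \Gal(L/K) \to \overline{\QQ_\ell}^\times$ valued in roots of unity; rescaling each $c(\sigma)$ by $\psi(\sigma)^{-1}$ makes the cocycle trivial. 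Equivalently, one obtains a finite-order character $\psi$ of $\Gal_L$ (by composing with a splitting/transfer) such that $\rho_\lambda \otimes \psi$ extends. The main obstacle I anticipate is precisely the cohomological bookkeeping in this last step: being careful that the coefficient module is $E_\lambda^\times$ (or $\overline{\QQ_\ell}^\times$) with the possibly nontrivial Galois action coming from how $\mathrm{W}_\lambda^\sigma$ relates to $\mathrm{W}_\lambda$, verifying that the class is genuinely torsion, and ensuring that the character $\psi$ one extracts can be taken of finite order and defined on $\Gal_L$ rather than only on $\Gal(L/K)$ — this requires the analogue of Ribet's lemma that $2$-cocycles valued in $\overline{\QQ_\ell}^\times$ over a finite quotient are split by a finite-order function, which I would either cite from \cite{MR2058653} or reprove using that $H^2(G, \mu_\infty) \hookrightarrow H^2(G,\overline{\QQ_\ell}^\times)$ is an isomorphism for finite $G$.
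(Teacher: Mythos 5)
Your first two steps (producing intertwiners $c(\sigma)$ from the isogenies via the strong-$K$-variety relation, and invoking Schur's lemma through the absolute irreducibility of $\mathrm{W}_\lambda$ from Theorem~\ref{theorem:pairing-existence} to get a $2$-cocycle obstruction) are exactly the paper's argument, recast in cocycle language rather than as an extension of the projective representation $\PP\rho_\lambda$. The problem is the final, decisive step. You claim that the class $[\xi]$ becomes a coboundary of a function $\psi:\Gal(L/K)\to\overline{\QQ_\ell}^\times$, i.e.\ that it dies in $H^2(\Gal(L/K),\overline{\QQ_\ell}^\times)$. That group is the Schur multiplier of the finite group $\Gal(L/K)$ and is nontrivial in general (e.g.\ it is $\ZZ/2$ for $C_2\times C_2$); divisibility of the coefficients or torsionness of the class does not force vanishing, and the isomorphism $H^2(G,\mu_\infty)\simeq H^2(G,\overline{\QQ_\ell}^\times)$ only says the class is represented by a root-of-unity-valued cocycle, not that it is trivial. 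Worse, if your splitting over $\Gal(L/K)$ were available, rescaling the $c(\sigma)$ would extend $\rho_\lambda$ itself with no twist at all, which is false: the quaternion-group example recorded right after Remark~\ref{remark:strong-unicity} is precisely a case where every conjugate of $\rho$ is isomorphic to $\rho$ and yet $\rho$ does not extend, so the obstruction class on the finite quotient is genuinely nonzero there. Rescaling intertwiners by scalars only changes the cocycle by a coboundary and never touches $\rho_\lambda|_{\Gal_L}$, so your ``equivalently, one obtains a finite-order character $\psi$ of $\Gal_L$'' does not follow from, and is not equivalent to, the splitting you assert.

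The correct way to finish (and what the paper does, following Ribet \cite{MR2058653} and Tate's theorem as in \cite[\S 6.5]{MR450201}) is to inflate the obstruction to the full profinite group: $H^2(\Gal_K,\overline{\QQ_\ell}^\times)=0$ for locally constant cochains with trivial action, so the inflated cocycle equals $d\alpha$ for a continuous map $\alpha:\Gal_K\to\overline{\QQ_\ell}^\times$. Since the inflated cocycle is trivial on $\Gal_L\times\Gal_L$, the restriction $\alpha|_{\Gal_L}$ is a homomorphism, locally constant hence of finite order; this restriction is the twisting character $\psi$, and $g\mapsto\alpha(g)^{-1}\widetilde{\rho_\lambda}(g)$ (in your notation) gives the extension of $\rho_\lambda\otimes\psi$. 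The essential point you are missing is that the splitting happens over $\Gal_K$, not over $\Gal(L/K)$, and $\psi$ is the restriction to $\Gal_L$ of that splitting function rather than anything inflated from the finite quotient; equivalently, in the paper's formulation, one extends the projective representation to $\Gal_K$ and lifts it there using Tate's vanishing, the character $\psi$ measuring the discrepancy between the lift and $\rho_\lambda$ on $\Gal_L$.
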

\begin{proof}
The proof is quite standard (see for example Theorem 11.2 of \cite{MR2270898}).
  For $\tau \in \Gal(L/K)$ let $^\tau\rho_{\lambda}$ denote the
  representation defined by
  \[
^\tau\rho_\lambda(\sigma) = \rho_\lambda(\tau \sigma \tau^{-1}).
  \]
  Use the same notation for the Tate representation
  $\rho_{A,\ell}$. The fact that $A$ is a $K$-variety implies that
  $^\tau \rho_{A,\ell}$ is isomorphic to $\rho_{A,\ell}$. The fact
  that $A$ is a strong $K$-variety (i.e. the fact that isogenies
  commute with endomorphisms as in (\ref{eq:strong-k-var})), imply
  that the same holds for the representation $\rho_\lambda$, namely
  $^\tau\rho_\lambda \simeq \rho_\lambda$.  Then there exists a matrix
  $A_\tau \in \GL_n(\overline{\QQ_\ell})$ such that for all
  $\sigma \in \Gal_L$
  \[
^\tau\rho_\lambda(\sigma) = A_\tau \rho_\lambda(\sigma)A_\tau^{-1}.
\]
Since the representation $\rho_\lambda$ is irreducible (by
Theorem~\ref{theorem:pairing-existence}), Schur's lemma implies that
the matrix $A_\tau$ is unique up to scalars, i.e. it determines a
unique element in $\PGL_n(\overline{\QQ_\ell})$.

Consider the projective representation
$\PP\rho_{\lambda}:\Gal_L \to \PGL_n(\overline{\QQ_\ell})$ (obtained
as the composition of $\rho_\lambda$ with the natural quotient map
$\pi:\GL_n(\overline{\QQ_\ell}) \to \PGL_n(\overline{\QQ_\ell})$).
Extend the representation $\PP\rho_\ell$ to a map
$\widetilde{\PP\rho_\ell}:\Gal_K \to \PGL_n(\overline{\QQ_\ell})$ by
defining $\widetilde{\PP\rho_\lambda(\tau)} := A_\tau$ for
$\tau \in \Gal_K$. An elementary computation proves that
$\widetilde{\PP\rho_\lambda}$ is a group morphism (hence a projective
representation).  Note that if $\tau \in \Gal_L$, then
\[
^\tau\rho_\lambda(\sigma) = \rho_{\lambda}(\tau \sigma \tau^{-1}) = \rho_{\lambda}(\tau)\rho_{\lambda}(\sigma)\rho_{\lambda}(\tau)^{-1},
\]
hence $A_\tau = \rho_{\lambda}(\tau)$ (up to a scalar matrix), so the map
$\widetilde{\PP\rho_\ell}$ really extends the map $\PP\rho_\ell$.

The obstruction to lift the projective representation to a continuous
homomorphism $\rho:\Gal_K \to \GL_2(\overline{\QQ_\ell})$ lies in
$H^2(\Gal_K,\overline{\QQ_\ell^\times})$, which by a result of Tate
(see \cite[\S 6.5]{MR450201}) is trivial.
\end{proof}

\begin{remark}
  It follows from the proof of the previous theorem that the extension
  is unique up to a twist by a character of $\Gal_K$. Note that when
  $L/K$ is cyclic, a similar argument proves that $\rho_\lambda$
  admits an extension without needing to twist.
\label{remark:strong-unicity}
\end{remark}

\begin{remark}
  When $L/K$ is not cyclic, the twist might be really needed. The
  following elementary example was provided to us by Alex Bartel: let
  $Q$ be the group of quaternions, and let $H$ be the subgroup
  $H=\{\pm 1\}$. Let $\rho:H \to \CC^\times$ the representation
  sending $-1$ to $-1$. It is clear that this representation cannot be
  extended to a $1$-dimensional representation of $Q$ (because the
  commutator subgroup of $Q$ is $H$), but it is true that for any
  $g \in Q$, the representation $^q\rho$ is isomorphic to $\rho$
  (because $H$ is the center of $Q$).
\end{remark}

We will denote by $\widetilde{W_\lambda}$ the vector space underlying
the extended representation $\widetilde{\rho_\lambda}$, and by
$\widetilde{E_\lambda}$ its coefficient field (so
$\dim_{E_\lambda}W_\lambda =
\dim_{\widetilde{E_\lambda}}\widetilde{W_\lambda}$).

\subsection{General $K$-varieties}
\label{subsection:general-k-varieties}
Let $L/K$ be an extension of number
fields, which we assume is Galois (otherwise enlarge $L$). Let $A/L$
be an isotypic $K$-variety, so $A\sim B^n$ for some simple abelian
variety $B$.  Let $D = \End(A_L)\otimes_\ZZ \QQ$ be the endomorphism
algebra of $A$ and let $E$ be its center, so $E$ is a field and $D$ is
a simple central $E$-algebra.  For $\sigma\in\Gal(L/K)$ and
$\varphi\in E$, define the ``action''
\begin{equation}
  \label{eq:action}
\sigma.\varphi := \mu_\sigma {}^\sigma\varphi \mu_\sigma^{-1} \in D.
\end{equation}

\begin{lemma}
For every $\sigma\in\Gal(L/K)$ and $\varphi\in E$, $\mu_\sigma {}^\sigma \varphi \mu_\sigma^{-1}\in E$.
\end{lemma}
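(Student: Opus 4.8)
The plan is to show that conjugation by $\mu_\sigma$ sends the center $E$ into itself by checking that the image lands in the centralizer of $D$ inside $D$, which is $E$ itself since $D$ is central simple over $E$. First I would fix $\sigma\in\Gal(L/K)$, $\varphi\in E$, and set $\psi := \mu_\sigma\,{}^\sigma\varphi\,\mu_\sigma^{-1}$; a priori this is an element of $D$, since $\mu_\sigma:{}^\sigma A\to A$ is an isogeny and ${}^\sigma\varphi\in\End^0({}^\sigma A)$, so conjugating gives an element of $\End^0(A)=D$ (strictly one works with the rational endomorphism algebra, where $\mu_\sigma$ is invertible). The goal is then to prove $\psi\in E$.

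The key step is the observation that applying $\sigma$ to endomorphisms is a ring isomorphism $\End^0(A)\to\End^0({}^\sigma A)$, so it carries the center of $\End^0(A)$ onto the center of $\End^0({}^\sigma A)$; that is, ${}^\sigma E = Z(\End^0({}^\sigma A))$. Since $\varphi\in E$ is central, ${}^\sigma\varphi$ is central in $\End^0({}^\sigma A)$, i.e. it commutes with every $\eta\in\End^0({}^\sigma A)$. Now for an arbitrary $\beta\in D=\End^0(A)$, the element $\mu_\sigma^{-1}\beta\,\mu_\sigma$ lies in $\End^0({}^\sigma A)$, hence commutes with ${}^\sigma\varphi$; conjugating this commutation relation back by $\mu_\sigma$ yields $\beta\psi = \psi\beta$. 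As $\beta\in D$ was arbitrary, $\psi$ lies in the centralizer of $D$ in $D$, which equals the center $E$ because $D$ is a central simple $E$-algebra. Therefore $\psi = \mu_\sigma\,{}^\sigma\varphi\,\mu_\sigma^{-1}\in E$, as claimed.

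The main point to be careful about is that $\mu_\sigma$ is an isogeny rather than an isomorphism, so all the conjugations must be interpreted in the $\QQ$-algebra $\End^0(-)$ where $\mu_\sigma$ becomes a unit; there is no obstacle here since $A\sim B^n$ and every isogeny is invertible up to the factor $\deg(\mu_\sigma)$ in $\End^0$. A secondary point worth spelling out is that ${}^\sigma(\cdot)$ is genuinely a ring homomorphism $\End^0(A)\to\End^0({}^\sigma A)$ compatible with composition and addition — this is immediate from the definition of ${}^\sigma\varphi$ as the base-changed endomorphism — which is what licenses the identification of ${}^\sigma E$ with the center of $\End^0({}^\sigma A)$. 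I expect the only real content to be this centralizer argument; everything else is formal bookkeeping, and the statement follows in a few lines.
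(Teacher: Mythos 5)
Your argument is correct and is essentially the paper's own proof: both show that $\mu_\sigma\,{}^\sigma\varphi\,\mu_\sigma^{-1}$ commutes with an arbitrary $\alpha\in D$ by transporting $\alpha$ to $\mu_\sigma^{-1}\alpha\mu_\sigma\in\End^0({}^\sigma A)$ and using that ${}^\sigma\varphi$ is central there (the paper phrases this via $\beta={}^\sigma({}^{\sigma^{-1}}\beta)$ and centrality of $\varphi$ in $D$, which is the same observation as your ``${}^\sigma(\cdot)$ is a ring isomorphism carrying center to center,'' both implicitly using that $L/K$ is Galois so $\sigma$-conjugate endomorphisms are again defined over $L$). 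Your concluding step, that the centralizer of $D$ in $D$ is $E$, matches the paper's conclusion as well.
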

\begin{proof}
  It is clear from its definition that
  $\mu_\sigma {}^\sigma \varphi \mu_\sigma^{-1}\in D$, we just need to
  prove that it belongs to the center.  Let $\alpha\in D$ and define
  $\beta := \mu_\sigma^{-1}\alpha\mu_\sigma\in \End({}^\sigma
  A)\otimes \QQ$. We have $\beta={}^\sigma({}^{\sigma^{-1}}\beta)$, so
  that
  \begin{align*}
    \mu_\sigma {}^\sigma\varphi \mu_\sigma^{-1}\alpha
    &=\mu_\sigma {}^\sigma\varphi \beta\mu_\sigma^{-1}
      =\mu_\sigma {}^\sigma(\varphi ({}^{\sigma^{-1}}\beta)) \mu_\sigma^{-1}\\
    &=\mu_\sigma {}^\sigma({}^{\sigma^{-1}}\beta \varphi) \mu_\sigma^{-1}
      =\alpha\mu_\sigma {}^\sigma\varphi \mu_\sigma^{-1}.
  \end{align*}
  Therefore $\mu_\sigma {}^\sigma\varphi \mu_\sigma^{-1}\in E$.
\end{proof}

\begin{lemma}
  Under the previous hypothesis, the value $\sigma \cdot \varphi$ does
  not depend on the choice of the isogeny $\mu_\sigma$.
\label{lemma:action-invariance}
\end{lemma}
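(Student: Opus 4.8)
The plan is to show that any two choices of isogeny ${}^\sigma A\to A$ differ by left multiplication by a unit of $D$, and that conjugation by such a unit fixes every element of the center $E$; combined with the previous lemma this gives the claim at once.

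First I would fix one isogeny $\mu_\sigma:{}^\sigma A\to A$ and let $\mu_\sigma'$ be any other isogeny with the same source and target. Since $\mu_\sigma$ is an isogeny it is invertible in the category of abelian varieties up to isogeny, so $u:=\mu_\sigma'\mu_\sigma^{-1}$ lies in $\End^0(A)=D$; it is moreover invertible, with inverse $\mu_\sigma(\mu_\sigma')^{-1}$, so $u\in D^\times$ and $\mu_\sigma'=u\mu_\sigma$. (Here the only input is that $A$ is isogenous to ${}^\sigma A$, which is precisely the $K$-variety hypothesis; isotypicity ensures as before that $D$ is simple with center the field $E$.)

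Now for $\varphi\in E$ we compute
\[
  \mu_\sigma'\,{}^\sigma\varphi\,(\mu_\sigma')^{-1}
  = u\mu_\sigma\,{}^\sigma\varphi\,\mu_\sigma^{-1}u^{-1}
  = u\bigl(\mu_\sigma\,{}^\sigma\varphi\,\mu_\sigma^{-1}\bigr)u^{-1}.
\]
By the previous lemma $\mu_\sigma\,{}^\sigma\varphi\,\mu_\sigma^{-1}\in E$, and since $E$ is the center of $D$ this element commutes with $u$. Hence the right-hand side equals $\mu_\sigma\,{}^\sigma\varphi\,\mu_\sigma^{-1}$, so $\sigma\cdot\varphi$ is independent of the chosen isogeny. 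There is essentially no obstacle beyond the standard fact, recalled above, that the difference of two isogenies with equal source and target is a unit in the endomorphism algebra.
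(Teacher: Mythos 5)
Your proof is correct and is essentially the paper's own argument: both reduce to the observation that two isogenies ${}^\sigma A\to A$ differ by a unit of the endomorphism algebra, so the two conjugates of ${}^\sigma\varphi$ differ by conjugation by an element that commutes with a central element. The only (immaterial) difference is that you factor $\mu_\sigma'=u\mu_\sigma$ with $u\in D^\times$ and invoke the previous lemma ($\mu_\sigma\,{}^\sigma\varphi\,\mu_\sigma^{-1}\in E$), whereas the paper factors on the other side, $\widetilde{\mu_\sigma}=\mu_\sigma(\mu_\sigma^{-1}\widetilde{\mu_\sigma})$, and uses centrality of $\varphi$ directly.
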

\begin{proof}
  Let $\widetilde{\mu_\sigma}:{}^\sigma A \to A$ be another
  isogeny. Then
  \[
\widetilde{\mu_\sigma}{}^\sigma\varphi \widetilde{\mu_\sigma}^{-1}= \mu_\sigma (\mu_\sigma^{-1}\widetilde{\mu_\sigma}) {}^\sigma \varphi (\mu_\sigma^{-1} \widetilde{\mu_\sigma})^{-1}\mu_\sigma^{-1}.
\]
The result follows from the fact that
$\mu_\sigma^{-1} \widetilde{\mu_\sigma} \in D$ and $\varphi$ is in its
center.
\end{proof}

\begin{lemma}
  The map $\Gal(L/K) \times E \to E$ defined by (\ref{eq:action})
  gives an action of $\Gal(L/K)$ on $E$.
\end{lemma}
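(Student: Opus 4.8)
The plan is to verify the two axioms of a group action for the map $(\sigma,\varphi)\mapsto \sigma.\varphi = \mu_\sigma\, {}^\sigma\varphi\, \mu_\sigma^{-1}$. By the previous two lemmas this map is well-defined (it lands in $E$ and is independent of the choice of $\mu_\sigma$), so only the identity and compatibility conditions remain.

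First I would check that the identity $e\in\Gal(L/K)$ acts trivially. One may take $\mu_e = \mathrm{id}_A$, and since ${}^e\varphi = \varphi$, we get $e.\varphi = \varphi$; by Lemma~\ref{lemma:action-invariance} this holds regardless of the chosen isogeny.

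Next, and this is the only step requiring a small computation, I would verify $\sigma.(\tau.\varphi) = (\sigma\tau).\varphi$ for $\sigma,\tau\in\Gal(L/K)$. The key observation is that ${}^\sigma\mu_\tau : {}^{\sigma\tau}A \to {}^\sigma A$ is an isogeny (apply $\sigma$ to the coefficients of $\mu_\tau$), so $\mu_\sigma\cdot {}^\sigma\mu_\tau : {}^{\sigma\tau}A \to A$ is an isogeny and by Lemma~\ref{lemma:action-invariance} can be used to compute $(\sigma\tau).\varphi$. Then
\[
(\sigma\tau).\varphi = (\mu_\sigma\, {}^\sigma\mu_\tau)\, {}^{\sigma\tau}\varphi\, (\mu_\sigma\, {}^\sigma\mu_\tau)^{-1}
= \mu_\sigma\, {}^\sigma\!\big(\mu_\tau\, {}^\tau\varphi\, \mu_\tau^{-1}\big)\, \mu_\sigma^{-1}
= \mu_\sigma\, {}^\sigma(\tau.\varphi)\, \mu_\sigma^{-1} = \sigma.(\tau.\varphi),
\]
using that applying $\sigma$ to coefficients is a ring homomorphism that commutes with inversion, together with ${}^\sigma({}^\tau\varphi) = {}^{\sigma\tau}\varphi$.

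The main (and essentially only) obstacle is a bookkeeping one: being careful that ${}^\sigma\mu_\tau$ really is a morphism with the correct source and target, so that the product $\mu_\sigma\, {}^\sigma\mu_\tau$ is a legitimate isogeny ${}^{\sigma\tau}A\to A$ to which Lemma~\ref{lemma:action-invariance} applies. Once that is in place the computation above is formal. I should also note that the action is by field automorphisms of $E$ (each $\sigma.(-)$ is a ring automorphism since conjugation by $\mu_\sigma$ and the Galois coefficient-action both are), so in fact $\Gal(L/K)$ acts on $E$ through a finite quotient by $\Aut(E/\QQ)$-valued maps; this is what will be used subsequently to produce the subfield $L'$.
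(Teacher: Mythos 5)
Your proof is correct and takes essentially the same route as the paper: both verify associativity by conjugating ${}^{\sigma\tau}\varphi$ by the isogeny $\mu_\sigma\,{}^\sigma\mu_\tau:{}^{\sigma\tau}A\to A$ and unfolding. The only cosmetic difference is that you justify identifying this with $(\sigma\tau).\varphi$ by invoking Lemma~\ref{lemma:action-invariance}, whereas the paper redoes that step in place via the element $c(\sigma,\tau)=\mu_\sigma\,{}^\sigma\mu_\tau\,\mu_{\sigma\tau}^{-1}\in D$ and the centrality of $(\sigma\tau).\varphi$, which is the same underlying argument.
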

\begin{proof}
  To easy notation, if $\sigma,\tau\in\Gal(L/K)$, we denote by
  $c(\sigma,\tau)=\mu_\sigma {}^\sigma\mu_\tau
  \mu_{\sigma\tau}^{-1}\in D$. Let $\sigma, \tau \in \Gal(L/K)$ and
  $\varphi \in E$. Then
\begin{align*}
  \sigma.(\tau.\varphi) 
  &= \mu_\sigma {}^\sigma(\mu_\tau {}^\tau \varphi \mu_\tau^{-1})\mu_\sigma^{-1}
    = (\mu_\sigma {}^\sigma\mu_\tau) {}^{\sigma\tau}\varphi (\mu_\sigma {}^\sigma\mu_\tau)^{-1}\\
  &= c(\sigma,\tau)\mu_{\sigma\tau} {}^{\sigma\tau}\varphi \mu_{\sigma\tau}^{-1}c(\sigma,\tau)^{-1}
    = \mu_{\sigma\tau} {}^{\sigma\tau}\varphi \mu_{\sigma\tau}^{-1}
    = (\sigma\tau).\varphi.
\end{align*}
In the second-to-last equality, we have used that
$\mu_{\sigma\tau} {}^{\sigma\tau}\varphi \mu_{\sigma\tau}^{-1}\in
E$ (the center), and in particular is an element that commutes with
$c(\sigma,\tau)$. 
\end{proof}
It is easy to verify that
$\sigma \cdot (\varphi + \psi) = \sigma \cdot \varphi + \sigma \cdot
\psi$, that
$\sigma \cdot (\varphi \psi) = (\sigma \cdot \varphi)(\sigma \cdot
\psi)$ and that the action on elements of $\QQ$ is trivial. In
particular, the previous action defines a group homomorphism
\begin{equation}
  \label{eq:morphism}
  \psi: \Gal(L/K) \to \Aut(E).
\end{equation}
Let $G_0$ be the kernel of $\psi$, and let $E_0$ be the subfield of
$E$ fixed by the action of $\Gal(L/K)$.

\begin{proposition}
  The field extension $E/E_0$ is Galois, $G_0$ is a normal
  subgroup of $\Gal(L/K)$, and we have an isomorphism
  \[
    \Gal(L/K)/G_0\simeq \Gal(E/E_0).
  \]
  Moreover, the isomorphism induces an order-reversing bijection
  between subgroups $H$ of $\Gal(L/K)$ containing $G_0$, and
  subextensions of $E/E_0$.
\label{prop:Galois-theory}
\end{proposition}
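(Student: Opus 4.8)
The plan is to show that the homomorphism $\psi\colon\Gal(L/K)\to\Aut(E)$ from \eqref{eq:morphism} lands inside $\Aut(E/E_0)$ and then realize $\Gal(L/K)/G_0$ as the full Galois group of $E/E_0$, after which the Galois correspondence is automatic. First I would observe that $E_0$ is by construction fixed pointwise by every element of $\Gal(L/K)$ acting through $\psi$, so $\psi$ factors through a homomorphism $\overline\psi\colon\Gal(L/K)/G_0\to\Aut(E/E_0)$, and $\overline\psi$ is injective since $G_0=\ker\psi$. The key arithmetic point is that $E/E_0$ is a \emph{finite} extension: indeed $E$ is a number field and $G_0$ has finite index in the finite group $\Gal(L/K)$, so the image of $\psi$ is a finite group of automorphisms of $E$. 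By Artin's lemma, a field $E$ equipped with a finite group $\Gamma$ of automorphisms is Galois over its fixed field $E^{\Gamma}$ with Galois group exactly $\Gamma$ and $[E:E^{\Gamma}]=|\Gamma|$. Applying this with $\Gamma=\psi(\Gal(L/K))\cong\Gal(L/K)/G_0$ and noting $E^{\Gamma}=E_0$ by definition, we conclude that $E/E_0$ is Galois and $\overline\psi$ is an isomorphism onto $\Gal(E/E_0)$.

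Normality of $G_0$ in $\Gal(L/K)$ is immediate, since it is the kernel of the homomorphism $\psi$. For the correspondence statement, I would simply transport the standard Galois correspondence for $E/E_0$ through the isomorphism $\Gal(L/K)/G_0\simeq\Gal(E/E_0)$. Subgroups $H$ of $\Gal(L/K)$ containing $G_0$ are in order-preserving bijection with subgroups of the quotient $\Gal(L/K)/G_0$ (correspondence theorem for groups), which via $\overline\psi$ correspond to subgroups of $\Gal(E/E_0)$, which in turn correspond in an order-reversing way to subextensions of $E/E_0$ by the fundamental theorem of Galois theory. Composing these gives the asserted order-reversing bijection; concretely, $H$ corresponds to the subfield $E^{\psi(H)}$ of elements of $E$ fixed by the action of $H$.

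The only step requiring genuine care — rather than citation of standard theory — is verifying that $\psi$ really produces \emph{field automorphisms} of $E$ fixing a common subfield, i.e. that the action is by ring automorphisms and is $\QQ$-linear; but this was already checked in the paragraph preceding \eqref{eq:morphism} (additivity, multiplicativity, and triviality on $\QQ$), together with the earlier lemma that $\sigma\cdot\varphi\in E$ and Lemma~\ref{lemma:action-invariance} guaranteeing well-definedness independently of the choice of $\mu_\sigma$. I do not anticipate a serious obstacle: the substance of the proposition is the bookkeeping that lets us invoke Artin's lemma, and the main thing to get right is that $E_0$ as defined (the fixed subfield of the $\Gal(L/K)$-action) coincides with the fixed field of the image group $\psi(\Gal(L/K))$, which is true essentially by unwinding the definitions.
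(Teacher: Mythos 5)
Your proof is correct and follows essentially the same route as the paper: the paper's own proof simply exhibits the fixed-field map $H\mapsto E^H$ and appeals to the usual Galois correspondence, which is exactly what you do, with the standard details (factoring $\psi$ through $\Gal(L/K)/G_0$ and invoking Artin's lemma to see that $E/E_0$ is Galois with group the image of $\psi$) written out explicitly.
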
 
\begin{proof}
  The correspondence associates to a subgroup $H$ of $\Gal(L/K)$ the
  field
\begin{equation}
  \label{eq:fixed}
  E^H=\{ \varphi \in E \; : \; \mu_\sigma {}^\sigma\varphi = \varphi \mu_\sigma, \; \forall \sigma \in H\}.
\end{equation}
The proof that the map gives the stated bijection is standard, and
follows the usual correspondence in Galois theory.
\end{proof}

\begin{theorem}
  There exists an intermediate field $K\subseteq L'\subseteq L$ such
  that $L/L'$ and $L'/K$ are Galois extensions, and such that $A$ is a
  strong $L'$-variety.
\label{thm:strong-field}
\end{theorem}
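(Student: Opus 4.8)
The plan is to set $L' := L^{G_0}$, the subfield of $L$ fixed by the kernel $G_0$ of the homomorphism $\psi\colon\Gal(L/K)\to\Aut(E)$ from \eqref{eq:morphism}. By Proposition~\ref{prop:Galois-theory}, $G_0$ is normal in $\Gal(L/K)$, so $L/L'$ is Galois with group $G_0$ and $L'/K$ is Galois with group $\Gal(L/K)/G_0$; hence all that remains is to show that $A$ is a \emph{strong} $L'$-variety. Since $K\subseteq L'$ and $A$ is a $K$-variety, for each $\sigma\in\Gal_{L'}$ an isogeny ${}^\sigma A\to A$ already exists; what must be shown is that one can be chosen so that $\mu_\sigma\,{}^\sigma\varphi=\varphi\,\mu_\sigma$ for every $\varphi\in\End(A_L)$.

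If $\sigma\in\Gal_{L'}$ fixes $L$, then ${}^\sigma A=A$ and $\mu_\sigma=\operatorname{id}$ works, since $\sigma$ fixes the coefficients of every $\varphi\in\End(A_L)$. Otherwise, fix any isogeny $\mu_\sigma\colon{}^\sigma A\to A$ and consider the map $c_\sigma\colon D\to D$ given by $\varphi\mapsto\mu_\sigma\,{}^\sigma\varphi\,\mu_\sigma^{-1}$. Because $L/L'$ is Galois we have $\sigma(L)=L$, so $\varphi\mapsto{}^\sigma\varphi$ is a ring isomorphism $D\to\End^0({}^\sigma A)$, and conjugation by $\mu_\sigma$ carries $\End^0({}^\sigma A)$ isomorphically onto $D$; thus $c_\sigma$ is a ring automorphism of $D$. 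Since $\sigma|_L\in\Gal(L/L')=G_0=\ker\psi$, the action of $\sigma$ on $E$ defined in \eqref{eq:action} is trivial, i.e.\ $c_\sigma$ is the identity on the center $E$; being a ring automorphism fixing $E$ pointwise, $c_\sigma$ is an $E$-algebra automorphism of the central simple $E$-algebra $D$.

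By the Skolem--Noether theorem, $c_\sigma$ is inner: there is $u_\sigma\in D^\times$ with $\mu_\sigma\,{}^\sigma\varphi\,\mu_\sigma^{-1}=u_\sigma\varphi u_\sigma^{-1}$ for all $\varphi\in D$. Equivalently, the quasi-isogeny $u_\sigma^{-1}\mu_\sigma\colon{}^\sigma A\to A$ satisfies $(u_\sigma^{-1}\mu_\sigma)\,{}^\sigma\varphi=\varphi\,(u_\sigma^{-1}\mu_\sigma)$ for all $\varphi\in\End(A_L)$. Choosing a positive integer $N$ with $Nu_\sigma^{-1}\in\End(A_L)$, the element $Nu_\sigma^{-1}$ is an isogeny $A\to A$ (it is invertible in $D$), so $\widetilde\mu_\sigma:=Nu_\sigma^{-1}\mu_\sigma$ is an honest isogeny ${}^\sigma A\to A$ which still commutes with all of $\End(A_L)$, as $N$ is central. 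Taking $\mu_\sigma=\widetilde\mu_\sigma$ in this case (and $\mu_\sigma=\operatorname{id}$ in the first) exhibits $A$ as a strong $L'$-variety.

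The Galois-theoretic part is routine given Proposition~\ref{prop:Galois-theory}; the crux is the observation that $\sigma$ acting trivially on the center $E$ forces --- via Skolem--Noether --- the whole $\sigma$-conjugation $c_\sigma$ on $D$ to be inner, so that the inner part can be absorbed into the non-unique choice of $\mu_\sigma$. The only technical wrinkle is replacing the resulting quasi-isogeny $u_\sigma^{-1}\mu_\sigma$ by a genuine isogeny, handled by the clearing-of-denominators step above.
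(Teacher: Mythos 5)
Your proof is correct and follows essentially the same route as the paper: take $L'=L^{G_0}$, observe that triviality of the $G_0$-action on the center $E$ makes conjugation by $\mu_\sigma$ an $E$-algebra automorphism of $D$, and apply Skolem--Noether to absorb the inner automorphism into a modified system of isogenies. Your extra step of clearing denominators to replace the quasi-isogeny $u_\sigma^{-1}\mu_\sigma$ by a genuine isogeny is a small refinement the paper leaves implicit, but the argument is the same.
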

\begin{proof}
  Let $L':=L^{G_0}$.  It is clear that $L/L'$ and $L'/K$ are Galois
  extensions. For each $\sigma\in\Gal(L/L')$ we consider the algebra
  automorphism $\Psi:D\to D$, defined by
  $\Psi(\varphi)=\mu_\sigma {}^\sigma\varphi \mu_\sigma^{-1}$.  Since
  $\Gal(L/L')\simeq G_0$, $\Psi$ is an $E$-algebra homomorphism. Then
  Skolem-Noether's theorem implies the existence of
  $\alpha_\sigma\in D^\times$ such that
  \[
    \mu_\sigma {}^\sigma\varphi \mu_\sigma^{-1} = \alpha_\sigma^{-1}
    \varphi \alpha_\sigma
  \]
  for each $\varphi\in D$. Then $A$ is a strong $L'$-variety with
  respect to the system of isogenies
  $\{\alpha_\sigma \mu_\sigma\}_{\sigma\in G_0}$.
\end{proof}

\begin{corollary}\label{corollary:strong-K-varieties}
  Let $L/K$ be a Galois extension, and let $A/K$ be a
  $K$-variety. Then $A$ is a strong $K$-variety if and only if for all
  elements $\varphi \in E$ relation (\ref{eq:strong-k-var})
  holds.
\end{corollary}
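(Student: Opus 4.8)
The plan is to prove the two implications separately. The forward implication ("strong $K$-variety $\Rightarrow$ relation (\ref{eq:strong-k-var}) holds on $E$") is purely formal, and the reverse implication is, modulo one small bookkeeping point, essentially a restatement of Theorem~\ref{thm:strong-field} — or, if one unwinds that theorem, of the Skolem--Noether theorem applied to the central simple $E$-algebra $D$.

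For the forward direction I would argue as follows. Assume $A$ is a strong $K$-variety, so there is a system of isogenies $\{\mu_\sigma\}_{\sigma\in\Gal(L/K)}$ with $\mu_\sigma{}^\sigma\varphi=\varphi\mu_\sigma$ for every $\varphi\in\End(A_L)=R$. For fixed $\sigma$, the assignment $\varphi\mapsto \mu_\sigma{}^\sigma\varphi-\varphi\mu_\sigma$ is additive and commutes with multiplication by rational scalars (which are central in $D$), so it extends to a $\QQ$-linear map $D\to D$ that vanishes on $R$; since $R$ spans $D=R\otimes_\ZZ\QQ$ over $\QQ$, it vanishes on all of $D$, hence in particular on $E\subseteq D$. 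So relation (\ref{eq:strong-k-var}) holds for every $\varphi\in E$.

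For the reverse direction I would first note that, by Lemma~\ref{lemma:action-invariance}, for $\varphi\in E$ the element $\mu_\sigma{}^\sigma\varphi\mu_\sigma^{-1}$ equals $\sigma\cdot\varphi$ and is independent of the choice of $\mu_\sigma$; thus the hypothesis ``(\ref{eq:strong-k-var}) holds for all $\varphi\in E$'' is literally the statement that $\sigma\cdot\varphi=\varphi$ for all $\sigma\in\Gal(L/K)$ and all $\varphi\in E$, i.e. that the homomorphism $\psi$ of (\ref{eq:morphism}) attached to the action (\ref{eq:action}) is trivial, i.e. $G_0=\ker\psi=\Gal(L/K)$. Consequently the field $L'=L^{G_0}$ furnished by Theorem~\ref{thm:strong-field} equals $L^{\Gal(L/K)}=K$, and that theorem asserts that $A$ is a strong $L'$-variety, i.e. a strong $K$-variety. (Concretely: for each $\sigma$ the $E$-algebra automorphism $\varphi\mapsto\mu_\sigma{}^\sigma\varphi\mu_\sigma^{-1}$ of $D$ is inner by Skolem--Noether, given by conjugation by some $\alpha_\sigma^{-1}$ with $\alpha_\sigma\in D^\times$, and then $\{\alpha_\sigma\mu_\sigma\}_\sigma$ is a strong system of isogenies.)

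The only point needing a word of care in the reverse direction — and the closest thing to an obstacle — is that the Skolem--Noether element $\alpha_\sigma$ a priori lies in $D^\times$ rather than in $R$, so $\alpha_\sigma\mu_\sigma$ is only a quasi-isogeny; but relation (\ref{eq:strong-k-var}) is homogeneous in the isogeny, so replacing $\alpha_\sigma$ by a suitable positive integer multiple makes $\alpha_\sigma\mu_\sigma$ an honest isogeny ${}^\sigma A\to A$ without affecting the commutation relation. Beyond this routine adjustment there is nothing substantial to do; the content of the statement is precisely that verifying the commutation relation on the center $E$ already forces it on all of $\End(A_L)$, which is exactly the mechanism exploited in the proof of Theorem~\ref{thm:strong-field}.
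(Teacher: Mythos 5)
Your argument is correct and follows the paper's own route: the forward implication is the formal $\QQ$-linear extension of the commutation relation from $\End(A_L)$ to $D\supseteq E$ (which the paper dismisses as clear), and the reverse implication observes that the hypothesis forces $E=E_0$, hence $G_0=\Gal(L/K)$ and $L'=K$, so Theorem~\ref{thm:strong-field} applies. Your extra remark about scaling the Skolem--Noether element $\alpha_\sigma$ to turn the quasi-isogeny $\alpha_\sigma\mu_\sigma$ into an honest isogeny is a reasonable (and harmless) point of care that the paper leaves implicit, but it does not change the substance of the argument.
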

\begin{proof}
  One implication is clear. For the other, suppose that
  (\ref{eq:strong-k-var}) holds for all elements in $E$. Then
  $E = E_0$, so $G_0 = \Gal(L/K)$ and the result follows from last
  theorem.
\end{proof}

Let $L':=L^{G_0}$ be as in Theorem~\ref{thm:strong-field}, so that $A$
is a strong $L'$-variety. Let $\ell$ be a rational prime such that all
prime ideals $\lambda$ of $E$ dividing $\ell$ satisfy Property
(P). Fix one such $\lambda$ and let $W_\lambda$ be the
$E_\lambda$-vector space of dimension $n = \frac{2g}{d[E:\QQ]}$ of
Theorem~\ref{theorem:pairing-existence} and let
$\rho_\lambda:\Gal_L \to \GL(W_\lambda)$ be the irreducible
representation. Let $\sigma \in \Gal(L/K)$, and as
in the previous section, denote by ${}^\sigma\rho_{\lambda}$ the
representation defined by
${}^\sigma\rho_{\lambda}(\tau)=\rho_{\lambda}(\sigma^{-1}\tau\sigma)$.

\begin{lemma}
\label{lemma:extension-K-varieties}
  The representation ${}^{\sigma}\rho_{\lambda}$ is isomorphic to
  $\rho_{\lambda}$ for all $\sigma \in \Gal_{L'}$.
\end{lemma}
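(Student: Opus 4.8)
The plan is to run the argument from the proof of Theorem~\ref{theorem:extension} with $K$ replaced by $L'$, using that, by Theorem~\ref{thm:strong-field}, $A$ is a \emph{strong} $L'$-variety. Fix $\sigma\in\Gal_{L'}$. Since $L/L'$ is Galois, $\sigma$ restricts to an automorphism of $L$ fixing $L'$, so ${}^\sigma A$ is again an abelian variety over $L$, and we may choose the isogeny $\mu_\sigma\colon{}^\sigma A\to A$ coming from the strong $L'$-variety structure, for which $\mu_\sigma\,{}^\sigma\varphi=\varphi\,\mu_\sigma$ for every $\varphi\in D=\End^0(A)$. Consider the $\QQ_\ell$-linear automorphism
\[
  \Phi_\sigma := (\mu_\sigma)_*\circ\sigma_A\colon \Vl A\longrightarrow \Vl({}^\sigma A)\longrightarrow\Vl A,
\]
where $\sigma_A$ is induced by applying $\sigma$ to $\ell$-power torsion points and $(\mu_\sigma)_*$ is induced by the isogeny. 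I will check that $\Phi_\sigma$ (i) intertwines ${}^\sigma\rho_{A,\ell}$ with $\rho_{A,\ell}$, and (ii) commutes with the action of $D\otimes_\QQ\QQ_\ell$ on $\Vl A$.

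For (i): the map $\sigma_A$ satisfies $\sigma_A\circ\rho_{A,\ell}(\sigma^{-1}\tau\sigma)=\rho_{{}^\sigma A,\ell}(\tau)\circ\sigma_A$ for $\tau\in\Gal_L$ (note $\sigma^{-1}\tau\sigma\in\Gal_L$, since $\Gal_L$ is normal in $\Gal_{L'}$), which is exactly the standard isomorphism ${}^\sigma\rho_{A,\ell}\simeq\rho_{{}^\sigma A,\ell}$; and $(\mu_\sigma)_*$ is $\Gal_L$-equivariant because $\mu_\sigma$ is defined over $L$, giving $\rho_{{}^\sigma A,\ell}\simeq\rho_{A,\ell}$. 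Composing yields (i). For (ii): $\sigma_A$ carries the action of $\varphi\in\End(A_L)$ to the action of ${}^\sigma\varphi$, while the strong relation $\mu_\sigma\,{}^\sigma\varphi=\varphi\,\mu_\sigma$ says that $(\mu_\sigma)_*$ carries the action of ${}^\sigma\varphi$ back to the action of $\varphi$; hence $\Phi_\sigma$ commutes with each $\varphi_*$, and therefore with all of $D\otimes_\QQ\QQ_\ell$ by $\QQ_\ell$-linearity.

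To conclude, since $\Phi_\sigma$ commutes with $E\otimes_\QQ\QQ_\ell$ it commutes with the idempotent $e_\lambda$, hence preserves $\mathrm V_\lambda=e_\lambda\,\Vl A$, and its restriction intertwines ${}^\sigma(\rho_{A,\ell}|_{\mathrm V_\lambda})$ with $\rho_{A,\ell}|_{\mathrm V_\lambda}$. By Theorem~\ref{theorem:pairing-existence} we have $\rho_{A,\ell}|_{\mathrm V_\lambda}\simeq\rho_\lambda^{\oplus d}$, so $({}^\sigma\rho_\lambda)^{\oplus d}\simeq\rho_\lambda^{\oplus d}$; as $\rho_\lambda$ (and hence ${}^\sigma\rho_\lambda$) is absolutely irreducible, uniqueness of the decomposition into irreducibles forces ${}^\sigma\rho_\lambda\simeq\rho_\lambda$. (Equivalently, for $\lambda$ with Property (P) the module $W_\lambda$ is cut out of $\mathrm V_\lambda$ by a primitive idempotent of $\End^0(A)\otimes_E E_\lambda$, which also lies in $D\otimes_\QQ\QQ_\ell$, and $\Phi_\sigma$ restricts to it directly.)

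The main obstacle is point (ii): being a $K$-variety only guarantees ${}^\sigma\rho_{A,\ell}\simeq\rho_{A,\ell}$, and such an isomorphism could a priori permute the isotypic pieces $\{\mathrm V_{\lambda'}\}_{\lambda'\mid\ell}$; it is the strong $L'$-variety property of Theorem~\ref{thm:strong-field} that forces the intertwiner $\Phi_\sigma$ to commute with $E$ and hence to stabilize $\mathrm V_\lambda$. Once this is in place, descending the isomorphism from $\Vl A$ to $\rho_\lambda$ is routine bookkeeping with idempotents and the irreducibility of $\rho_\lambda$.
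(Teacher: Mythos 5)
Your proof is correct and follows essentially the same route as the paper: the paper's one-sentence argument is exactly that the strong $L'$-variety structure lets one take $\mu_\sigma$ to commute with the endomorphism algebra, so the induced map on Tate modules is $E_\lambda$-linear and yields the isomorphism ${}^\sigma\rho_\lambda \simeq \rho_\lambda$. You have simply made explicit the intertwiner $(\mu_\sigma)_*\circ\sigma_A$, its compatibility with the idempotents, and the descent from $\mathrm{V}_\lambda\simeq \mathrm{W}_\lambda^{\oplus d}$ to $\mathrm{W}_\lambda$, all of which the paper leaves implicit.
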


\begin{proof}
The fact that $A$ is a strong $L'$-variety implies that if
  $\sigma \in \Gal_{L'}$ then we can assume that $\mu_{\sigma}$ is a
  map of $E_\lambda$-vector spaces, giving an isomorphism between
  $\rho_\lambda$ and ${}^{\sigma}\rho_\lambda$.
\end{proof}

By Theorem~\ref{theorem:extension} there exists a character
$\theta:\Gal_L \to \overline{\QQ}^\times$ such that the twisted
representation $\rho_\lambda \otimes \theta$ can be extended to a
representation $\widetilde{\rho_\lambda}$ of $\Gal_{L'}$.

\begin{remark}
  \label{remark:irreducible-extension}
  It is not hard to prove a converse of
  Lemma~\ref{lemma:extension-K-varieties}, so the representation
  $\rho_\lambda$ cannot be extended any further. However, the
  representation $\widetilde{\rho_\lambda}$ is defined up to a twist,
  and it might be the case that a twist of the representation does
  extend a little further. For example if $E/\QQ$ is an elliptic
  curve, $K$ is a number field, and $\theta$ is a quadratic character
  of $K$ that does not extend to any subfield, then the Galois
  representation attached to $E \otimes \theta$ does not extend to a
  Galois group larger than $\Gal_K$, but a twist of it does. To unify
  statements, we assume from now on that the representation
  $\widetilde{\rho_\lambda}$ is not isomorphic to
  ${}^\sigma\widetilde{\rho_\lambda}$ for any $\sigma \in \Gal(L'/K)$
  different from the identity (we can always take a quadratic twist for this to be true).
\end{remark}

\begin{definition} Let $A/L$ be an abelian $K$-variety.  A Galois
  representation of $\Gal_K$ attached to it is
  $\rho_{A,\lambda}:=\Ind_{\Gal_{L'}}^{\Gal_K}\widetilde{\rho_{\lambda}}$.
\label{definition:Galois-reps}
\end{definition}

Denote by $\mathcal V_\lambda$ the vector space of the induced
representation, namely
\begin{equation}
  \label{eq:direct-sum}
  \mathcal V_\lambda = \bigoplus_{\sigma\in \Gal(L'/K)}\sigma \widetilde{W_\lambda}.  
\end{equation}
  \label{lemma:dimension}

  \begin{theorem}
\label{thm:irreducibility-dimension}
The Galois representation $\rho_{A,\lambda}$ is
absolutely irreducible of dimension $\frac{2g}{d[E_0:\QQ]}$. 
\end{theorem}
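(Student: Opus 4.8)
The plan is to prove the dimension count and the absolute irreducibility separately, as they require different inputs.

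\textbf{Dimension.} The dimension of $\rho_{A,\lambda} = \Ind_{\Gal_{L'}}^{\Gal_K}\widetilde{\rho_\lambda}$ is $[L':K]\cdot \dim \widetilde{\rho_\lambda}$. By construction $\dim \widetilde{\rho_\lambda} = \dim \rho_\lambda = n = \frac{2g}{d[E:\QQ]}$, so it suffices to show $[L':K] = \frac{[E:\QQ]}{[E_0:\QQ]}$. But $L' = L^{G_0}$, so $[L':K] = [\Gal(L/K):G_0] = |\Gal(L/K)/G_0|$, and by Proposition~\ref{prop:Galois-theory} this quotient is isomorphic to $\Gal(E/E_0)$, which has order $[E:E_0] = \frac{[E:\QQ]}{[E_0:\QQ]}$. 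Multiplying gives $\dim \rho_{A,\lambda} = \frac{[E:\QQ]}{[E_0:\QQ]}\cdot\frac{2g}{d[E:\QQ]} = \frac{2g}{d[E_0:\QQ]}$, as claimed.

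\textbf{Absolute irreducibility.} Here I would use Mackey's irreducibility criterion: an induced representation $\Ind_H^G V$ (with $V$ irreducible) is irreducible if and only if for every $g \in G\setminus H$ the representations $V$ and ${}^g V$ are disjoint (share no common constituent) as representations of $H \cap gHg^{-1}$. In our situation $H = \Gal_{L'}$ is normal in $G = \Gal_K$ (since $L'/K$ is Galois by Theorem~\ref{thm:strong-field}), so $H\cap gHg^{-1} = H$ and the condition becomes: $\widetilde{\rho_\lambda} \not\simeq {}^\sigma\widetilde{\rho_\lambda}$ for all $\sigma \in \Gal(L'/K)$ not the identity. This is precisely the normalization imposed in Remark~\ref{remark:irreducible-extension} (which we may assume after a quadratic twist, noting such a twist does not change the dimension). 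Since $\widetilde{\rho_\lambda}$ restricts to $\rho_\lambda\otimes\theta$ on $\Gal_L$, and $\rho_\lambda$ is absolutely irreducible by Theorem~\ref{theorem:pairing-existence}, the representation $\widetilde{\rho_\lambda}$ is absolutely irreducible over $\Gal_{L'}$; applying Mackey over $\overline{\QQ_\ell}$ then gives absolute irreducibility of the induction.

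\textbf{Main obstacle.} The essential verification is that the Mackey disjointness hypothesis really holds, i.e.\ that the non-conjugacy assumption in Remark~\ref{remark:irreducible-extension} is legitimately available and that it implies the genuinely \emph{absolute} (not just $E_\lambda$-rational) version of the criterion. I would argue that $\widetilde{\rho_\lambda}$ and ${}^\sigma\widetilde{\rho_\lambda}$ have the same restriction to $\Gal_L$ up to a character twist only when $\sigma$ fixes the relevant data; since $\sigma$ acts nontrivially on $E$ for $\sigma\notin G_0$, the two extensions differ on $\Gal_L$ already in a way no character twist can repair, so after the quadratic twist of Remark~\ref{remark:irreducible-extension} we indeed have ${}^\sigma\widetilde{\rho_\lambda}\not\simeq\widetilde{\rho_\lambda}$ for all nontrivial $\sigma\in\Gal(L'/K)$. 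One subtlety to handle carefully is that $\widetilde{\rho_\lambda}$ may be defined over a field $\widetilde{E_\lambda}$ larger than $E_\lambda$; but absolute irreducibility of $\rho_\lambda$ is preserved under this base extension, and Mackey's criterion applies verbatim over $\overline{\QQ_\ell}$, so no difficulty arises there.
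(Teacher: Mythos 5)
Your proposal is correct and follows essentially the same route as the paper: the dimension count via $[L':K]=[E:E_0]$ from Proposition~\ref{prop:Galois-theory}, and irreducibility via Mackey's criterion combined with the non-conjugacy normalization of Remark~\ref{remark:irreducible-extension}. Your extra discussion of why that normalization holds is more than the paper provides (it simply assumes the remark, arranged by a quadratic twist), but it does not change the substance of the argument.
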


\begin{proof}
  The representation $\rho_\lambda$ is irreducible by
  \ref{theorem:pairing-existence}, so the same holds for its extension
  $\widetilde{\rho_\lambda}$ to $\Gal_{L'}$. By
  Remark~\ref{remark:irreducible-extension},
  ${}^\sigma\widetilde{\rho_\lambda}$ is not isomorphic to
  $\widetilde{\rho_\lambda}$ if $\sigma \in \Gal(L'/K)$ is not
  trivial, so irreducibility follows from Mackey's irreducibility
  criterion.

  The dimension of the Galois representation $\rho_{A,\lambda}$ equals
  $\frac{2g}{d[E:\QQ]}[L':K]$, which by
  Proposition~\ref{prop:Galois-theory} equals
  $\frac{2g}{d[E:\QQ]}[E:E_0] = \frac{2g}{d[E_0:\QQ]}$ as claimed.
\end{proof}

\subsection{Dimension}
\label{subsection:dimension}
A priory the dimension of the representation $\rho_{A,\lambda}$
depends on the extension $L/K$. A natural problem is to
understand its dependence, expecting that the
dimension should only decrease when the variety attains extra
endomorphisms. Let $\tilde{L}/L$ be a finite Galois extension, let
$\tilde{D}$ denote the endomorphism algebra of $A/\tilde{L}$ (tensored
with $\QQ$), let $\widetilde{E}$ denote its center, and let
$\widetilde{E}_0$ denote the subfield of $\widetilde{E}$ fixed by
$\Gal(\tilde{L}/K)$. Let $d$ and $\tilde d$ be the Schur indices of
$D$ and $\tilde D$, respectively. By Theorem~\ref{theorem:albert},
these indices can a priory be any positive integer. A short exercise
(or Lemma~\ref{lemma:degree E0 tildeE0} below) shows that $d\mid \tilde d$
whenever $A_{\tilde L}$ is simple.

\begin{lemma}\label{lemma:degree E0 tildeE0}
  We have the inclusion $\widetilde{E}_0 \subseteq E_0$. If
  $A_{\tilde L}$ is simple, then
  $[E_0:\widetilde{E}_0] \mid \frac{\tilde{d}}{d}$.
\end{lemma}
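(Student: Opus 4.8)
The plan is to prove the two assertions separately, combining the Galois-theoretic behaviour of the isogenies $\mu_\sigma$ with a double-centralizer computation for the pair of division algebras $D \subseteq \tilde D$.

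For the inclusion $\widetilde{E}_0 \subseteq E_0$ — in fact for the equality $\widetilde{E}_0 = \widetilde{E} \cap E_0$, which I will also need below — I would argue as follows. Take $\varphi \in \widetilde{E}_0$. For $\sigma \in \Gal(\tilde L/L)$ one may take the isogeny ${}^\sigma A \to A$ over $\tilde L$ to be the identity (since $A$ is defined over $L$), so the $\tilde L$-action of such $\sigma$ on $\varphi$ is simply ${}^\sigma\varphi$; as $\varphi$ is fixed, $\varphi$ is $\Gal(\tilde L/L)$-invariant, hence $\varphi \in \End^0(A_L) = D$ by Galois descent for endomorphisms. Since $\varphi$ also lies in $\widetilde{E} = Z(\tilde D)$, it centralizes $D$, so $\varphi \in Z(D) = E$. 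Finally, for arbitrary $\sigma \in \Gal(L/K)$ lift $\sigma$ to $\tilde\sigma \in \Gal(\tilde L/K)$ and note that the $L$-isogeny $\mu_\sigma \colon {}^\sigma A \to A$ is an admissible choice of isogeny over $\tilde L$ (with ${}^{\tilde\sigma}A = {}^\sigma A$); since $\varphi$ is central in $\tilde D$, Lemma~\ref{lemma:action-invariance} applied over $\tilde L$ gives $\tilde\sigma\cdot\varphi = \mu_\sigma\,{}^{\tilde\sigma}\varphi\,\mu_\sigma^{-1} = \mu_\sigma\,{}^{\sigma}\varphi\,\mu_\sigma^{-1} = \sigma\cdot\varphi$, and the left-hand side is $\varphi$. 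Hence $\varphi \in E_0$. Running the same chain of equalities for $\varphi \in \widetilde{E} \cap E_0$ shows $\widetilde{E} \cap E_0 \subseteq \widetilde{E}_0$, so $\widetilde{E}_0 = \widetilde{E} \cap E_0$.

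For the divisibility, assume $A_{\tilde L}$ simple; then $A_L$ is simple (otherwise $A \sim B^n$ with $n \geq 2$ would give $A_{\tilde L} \sim B_{\tilde L}^n$), so $D$ and $\tilde D$ are division algebras with centres $E$ and $\widetilde{E}$, and $D \subseteq \tilde D$. Since $\widetilde{E} = Z(\tilde D)$ centralizes $D$, the subring $F := E\widetilde{E}$ of $\tilde D$ is commutative and finite-dimensional, hence a field, and it contains $\widetilde{E}$. The $\widetilde{E}$-subalgebra $D\widetilde{E}$ of $\tilde D$ is the homomorphic image, hence (by simplicity) an isomorphic copy, of the central simple $F$-algebra $D \otimes_E F$, of $F$-dimension $d^2$; being a finite-dimensional subalgebra of the division ring $\tilde D$, it is itself a division algebra, so of Schur index $d$. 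Applying the double-centralizer theorem to the simple $\widetilde{E}$-subalgebra $D\widetilde{E} \subseteq \tilde D$, the centralizer $C_{\tilde D}(D\widetilde{E})$ is a division algebra with centre $F$ and $F$-dimension $\tilde d^{\,2}/(d^2[F:\widetilde{E}]^2)$; in particular $\tilde d/(d[F:\widetilde{E}])$ is a positive integer, so $d\,[E\widetilde{E}:\widetilde{E}] \mid \tilde d$. This recovers $d \mid \tilde d$ and yields $[E\widetilde{E}:\widetilde{E}] \mid \tilde d/d$.

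It remains to compare $[E_0:\widetilde{E}_0]$ with $[E\widetilde{E}:\widetilde{E}]$. By Proposition~\ref{prop:Galois-theory} applied to $\tilde L/K$, the extension $\widetilde{E}/\widetilde{E}_0$ is Galois; $\widetilde{E}$ is central in $\tilde D$, so it commutes with $E_0$, and $\widetilde{E} \cap E_0 = \widetilde{E}_0$ by the first part. Hence $\widetilde{E}$ and $E_0$ are linearly disjoint over $\widetilde{E}_0$ inside the field $E_0\widetilde{E} \subseteq \tilde D$, so $[E_0\widetilde{E}:\widetilde{E}] = [E_0:\widetilde{E}_0]$. Since $E_0\widetilde{E} \subseteq E\widetilde{E}$, this gives $[E_0:\widetilde{E}_0] \mid [E\widetilde{E}:\widetilde{E}] \mid \tilde d/d$, completing the proof. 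I expect the double-centralizer step to be the main technical point: one must carefully record that $E\widetilde{E}$ is a field, that $D\widetilde{E} \cong D \otimes_E (E\widetilde{E})$ is central simple over it, and that every finite-dimensional subalgebra of the division ring $\tilde D$ — such as $D\widetilde{E}$ and its centralizer — is again a division algebra, so that Schur indices can be read off from dimensions.
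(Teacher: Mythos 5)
Your proof is correct, and while the overall architecture (prove $\widetilde{E}_0=\widetilde{E}\cap E_0$, bound a compositum degree by $\tilde d/d$, then transfer to $[E_0:\widetilde{E}_0]$ by Galois theory) parallels the paper, your key divisibility step is genuinely different. The paper bounds $[E\widetilde{E}:\widetilde{E}]$ by invoking the Grunwald--Wang theorem to produce a maximal subfield $M\subset D$ that is $E$-linearly disjoint from $E\widetilde{E}$, and then uses that the subfield $M\widetilde{E}\subset\widetilde{D}$ containing the center has degree dividing $\tilde d$, so $d\,[E\widetilde{E}:\widetilde{E}]\mid\tilde d$. You instead apply the double centralizer theorem in $\widetilde{D}$ directly to the division subalgebra $D\widetilde{E}\cong D\otimes_E E\widetilde{E}$ (central simple of dimension $d^2$ over the field $F=E\widetilde{E}$), and read the divisibility off from $\dim_{\widetilde{E}}(D\widetilde{E})\cdot\dim_{\widetilde{E}}C_{\widetilde{D}}(D\widetilde{E})=\tilde d^{\,2}$ together with the fact that the centralizer is a central division algebra over $F$, hence of square $F$-dimension; this yields $d\,[F:\widetilde{E}]\mid\tilde d$ without any appeal to Grunwald--Wang, and recovers $d\mid\tilde d$ along the way. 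What the paper's route buys is a concrete maximal subfield (useful elsewhere in arguments of this type); what yours buys is a purely algebraic, choice-free argument whose only inputs are the double centralizer theorem and the observation that finite-dimensional subalgebras of a division ring are division algebras. Your transfer step, compositing $E_0$ with $\widetilde{E}$ and using linear disjointness of the Galois extension $\widetilde{E}/\widetilde{E}_0$ from $E_0$ (via $\widetilde{E}\cap E_0=\widetilde{E}_0$), is also slightly cleaner than the paper's detour through $E\cap\widetilde{E}$, and the first part of your argument (choosing $\mu_\sigma=\mathrm{id}$ on $\Gal(\tilde L/L)$ and an $L$-rational $\mu_\sigma$ for lifts of $\Gal(L/K)$, using Lemma~\ref{lemma:action-invariance}) is just an explicit unwinding of the paper's chain of equalities.
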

\begin{proof}
  For the inclusion, note that
  \[
    \widetilde{E}_0=\widetilde{E}^{\Gal(\widetilde{L}/K)} =
    (\widetilde{E}^{\Gal(\widetilde{L}/L)})^{\Gal(L/K)} =
    (\widetilde{E}\cap E)^{\Gal(\widetilde{L}/K)}=\widetilde{E}_0\cap
    E_0.
  \]
	
  \noindent
  \begin{minipage}{0.7\textwidth}
    \setlength{\parindent}{12.0pt}
    To bound the degree
    $[E_0:\widetilde{E}_0]$, the reader might find the displayed field
    relations useful. First, we bound $[E:\widetilde{E}\cap
    E]$. Consider the compositum $E\tilde E$ in $\tilde D$. By
    applying the Grunwald-Wang theorem \cite[\S~18.6]{MR0674652} (see
    e.g. \cite[Lemma~7.4.1]{MR3904148}), $D$ contains a maximal
    subfield $M$ which is $E$-linearly disjoint from $E\tilde
    E$. Since $[M:E]=d$, we have $[M\tilde E:E\tilde E]=d$. The degree
    $[M\tilde E:\tilde E]$ divides $\tilde{d}$, therefore,
    $[E\tilde E:\tilde E]$ divides $\frac{\tilde{d}}{d}$.
	
    Now, the extension $\widetilde{E}/\widetilde{E}_0$ is Galois,
    therefore, the inclusion
    $\widetilde{E}_0\subseteq E \cap \widetilde{E}$ implies we have an
    equality $[\widetilde{E}:\widetilde{E}\cap E]=[E\widetilde{E}:E]$,
    and so $[E\widetilde{E}:\widetilde{E}]=[E:E \cap
    \widetilde{E}]$. It follows that $[E:E \cap \widetilde{E}]$
    divides $\frac{\tilde{d}}{d}$. 
    
    Apply now a similar argument to the extension
    $[E_0:\widetilde{E}_0]$. We have seen above that
    $\widetilde{E}_0 = (E\cap \widetilde E)^{\Gal(\widetilde L/K)}$,
    so $(E\cap \widetilde E) / \widetilde{E}_0$ is Galois, and
    therefore 
    $[E:E_0] = [E\cap\widetilde E:\widetilde{E}_0]$ and
    $[E:E\cap\widetilde E]= [E_0:\widetilde{E}_0]$. Hence
    $[E_0:\widetilde{E}_0]$ divides $\frac{\tilde d}{d}$.
  \end{minipage}
  \hfill\begin{minipage}{0.30\textwidth}
\[
\xymatrix{
& M\widetilde{E} \ar@{-}[dl] \ar@{-}[dd]_{d} \ar@/^1.6pc/@{-}[dddr]^{\mid\ \tilde d}\\
M \ar@{-}[dd]_{d}\\
& E\widetilde{E} \ar@{-}[dl]|-{=} \ar@{-}[dr]^{\mid\ \frac{\tilde d}{d}}|-{\equiv}\\
E \ar@{-}[dr]|-{\equiv} \ar@{-}[dd] && \widetilde{E} \ar@{-}[dl]|-{=} \ar@/^1pc/@{-}[dddl]^{\text{Galois}}\\
& E\cap \widetilde{E} \ar@{-}[dd]\\
E_0 \ar@{-}[dr]\\
& {\widetilde{E}_0}
}
\]
\end{minipage}
\end{proof}

Let $\widetilde{\sigma_{A,\lambda}}$ be the Galois representation of
$\Gal_K$ attached to $A/\tilde{L}$ as in
Definition~\ref{definition:Galois-reps}.

\begin{theorem}
  If $A_{\tilde L}$ is simple, then
  \(
    \dim(\widetilde{\sigma_{A,\lambda}})\mid \dim(\rho_{A,\lambda}).
  \)
  Moreover, equality holds if and only if
  $[E_0:\widetilde{E}_0]=\tilde d/d$, and in particular whenever $\tilde d = d$.
\label{thm:dimension}
\end{theorem}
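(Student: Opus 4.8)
The plan is to compute both dimensions using the formula from Theorem~\ref{thm:irreducibility-dimension} and reduce the divisibility claim to the field-degree statement of Lemma~\ref{lemma:degree E0 tildeE0}. By that theorem, $\dim(\rho_{A,\lambda}) = \frac{2g}{d[E_0:\QQ]}$ and $\dim(\widetilde{\sigma_{A,\lambda}}) = \frac{2g}{\tilde d[\widetilde{E}_0:\QQ]}$. Taking the ratio,
\[
  \frac{\dim(\rho_{A,\lambda})}{\dim(\widetilde{\sigma_{A,\lambda}})} = \frac{\tilde d[\widetilde{E}_0:\QQ]}{d[E_0:\QQ]} = \frac{\tilde d}{d}\cdot\frac{1}{[E_0:\widetilde{E}_0]},
\]
where I have used $\widetilde{E}_0\subseteq E_0$ (the inclusion from Lemma~\ref{lemma:degree E0 tildeE0}) to write $[E_0:\QQ] = [E_0:\widetilde{E}_0][\widetilde{E}_0:\QQ]$. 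So $\dim(\widetilde{\sigma_{A,\lambda}})\mid\dim(\rho_{A,\lambda})$ if and only if $[E_0:\widetilde{E}_0]\mid \tilde d/d$, which is exactly the conclusion of Lemma~\ref{lemma:degree E0 tildeE0} under the hypothesis that $A_{\tilde L}$ is simple.

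Next I would address the equality clause. From the displayed ratio, $\dim(\rho_{A,\lambda}) = \dim(\widetilde{\sigma_{A,\lambda}})$ precisely when $\frac{\tilde d}{d} = [E_0:\widetilde{E}_0]$, which is the stated criterion. The parenthetical ``in particular whenever $\tilde d = d$'' then follows immediately: if $\tilde d = d$ then $\tilde d/d = 1$, and since $[E_0:\widetilde{E}_0]$ is a positive integer dividing $\tilde d/d = 1$ (again by Lemma~\ref{lemma:degree E0 tildeE0}), we get $[E_0:\widetilde{E}_0] = 1 = \tilde d/d$, so the two dimensions agree.

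One small technical point to be careful about: the quantities $\dim(\rho_{A,\lambda})$ and $\dim(\widetilde{\sigma_{A,\lambda}})$ should be read as the dimensions over $\QQ_\ell$ (equivalently, as $E_\lambda$-dimension of $W_\lambda$ times $[E_0:\QQ]$, matching the normalization in Theorem~\ref{thm:irreducibility-dimension}), so that the formula $\dim = \frac{2g}{d[E_0:\QQ]}$ applies verbatim to both representations with their respective invariants $(d, E_0)$ and $(\tilde d, \widetilde{E}_0)$. I do not expect a genuine obstacle here; the entire content of the theorem has been pushed into Lemma~\ref{lemma:degree E0 tildeE0}, and the only work left is the elementary arithmetic of multiplicativity of field degrees combined with the observation that a positive integer dividing $1$ equals $1$. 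The proof is therefore short.

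\begin{proof}
  By Theorem~\ref{thm:irreducibility-dimension} we have
  $\dim(\rho_{A,\lambda}) = \frac{2g}{d[E_0:\QQ]}$ and, applying the
  same theorem to $A/\tilde L$,
  $\dim(\widetilde{\sigma_{A,\lambda}}) = \frac{2g}{\tilde
    d[\widetilde{E}_0:\QQ]}$. By Lemma~\ref{lemma:degree E0 tildeE0},
  $\widetilde{E}_0 \subseteq E_0$, so $[E_0:\QQ] =
  [E_0:\widetilde{E}_0]\,[\widetilde{E}_0:\QQ]$, and hence
  \[
    \frac{\dim(\rho_{A,\lambda})}{\dim(\widetilde{\sigma_{A,\lambda}})}
    = \frac{\tilde d\,[\widetilde{E}_0:\QQ]}{d\,[E_0:\QQ]}
    = \frac{\tilde d}{d\,[E_0:\widetilde{E}_0]}.
  \]
  Since $A_{\tilde L}$ is simple, Lemma~\ref{lemma:degree E0
    tildeE0} gives $[E_0:\widetilde{E}_0] \mid \frac{\tilde
    d}{d}$, so the right-hand side is a positive integer; that is,
  $\dim(\widetilde{\sigma_{A,\lambda}}) \mid
  \dim(\rho_{A,\lambda})$. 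Moreover the two dimensions are equal if
  and only if the displayed ratio equals $1$, i.e. if and only if
  $[E_0:\widetilde{E}_0] = \frac{\tilde d}{d}$. Finally, if $\tilde d
  = d$ then $\frac{\tilde d}{d} = 1$, and as $[E_0:\widetilde{E}_0]$
  is a positive integer dividing $1$ we get $[E_0:\widetilde{E}_0] = 1
  = \frac{\tilde d}{d}$, so equality holds.
\end{proof}
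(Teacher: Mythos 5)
Your proof is correct, and for the two main claims it follows the same route as the paper: both compute $\dim(\rho_{A,\lambda})=\frac{2g}{d[E_0:\QQ]}$ and $\dim(\widetilde{\sigma_{A,\lambda}})=\frac{2g}{\tilde d[\widetilde{E}_0:\QQ]}$ and reduce divisibility, as well as the criterion ``equality $\iff [E_0:\widetilde{E}_0]=\tilde d/d$'', to Lemma~\ref{lemma:degree E0 tildeE0} together with $\widetilde{E}_0\subseteq E_0$. Where you genuinely diverge is the final clause ``in particular whenever $\tilde d=d$'': you dispatch it in one line by noting that $[E_0:\widetilde{E}_0]$ divides $\tilde d/d=1$, again by the lemma, whereas the paper proves three auxiliary facts — that $\widetilde{E}^{\Gal(\widetilde{L}/L)}=\widetilde{E}\cap E$, that $E\subseteq\widetilde{E}$ forces $E_0=\widetilde{E}_0$ and hence $\dim(\rho_{A,\lambda})=\frac{\tilde d}{d}\dim(\widetilde{\sigma_{A,\lambda}})$, and that $\tilde d=d$ forces $E\subseteq\widetilde{E}$ — and then chains them together. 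Your shortcut is perfectly valid because the lemma already packages the needed divisibility; the paper's longer detour buys structural information (the behaviour of the centers under base change and the exact dimension ratio when $E\subseteq\widetilde{E}$) that is not needed here but is cited again, as ``Fact 2'' and ``Fact 3'', in the proof of Theorem~\ref{thm:dimension2}. So your argument is shorter and self-contained for this statement, at the cost of not setting up those reusable ingredients.
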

\begin{proof}
  By Lemma~\ref{lemma:dimension} we have
  $\dim(\rhoA)=\frac{2g}{d[E_0:\QQ]}$ and
  $\dim(\rhoAt)=\frac{2g}{\tilde{d}[\widetilde{E}_0:\QQ]}$. Hence
  the divisibility between dimensions holds by Lemma~\ref{lemma:degree
    E0 tildeE0}. For the second statement, we need three facts.

\vspace{2pt}
\noindent {\bf Fact 1:}
$\widetilde{E}^{\Gal(\widetilde{L}/L)}=\widetilde{E}\cap
D=\widetilde{E}\cap E$.
\vspace{2pt}

Since the variety $A/L$ is isogenous to all of its Galois conjugates,
we can assume that $\mu_{\sigma}$ is the identity for all
$\sigma \in \Gal(\widetilde{L}/L)$ (recall that the action of
$\Gal(\widetilde{L}/K)$ on $\widetilde{E}$ is independent of the
choice, by Lemma~\ref{lemma:action-invariance}). Then if
$\sigma \in \Gal(\widetilde{L}/K)$ and $\varphi \in \widetilde{E}$,
\[
\sigma \cdot \varphi = {}^\sigma \varphi,
\]
i.e. the action is given by the Galois action on the coefficients of
the morphism $\varphi$. By Galois theory,
$\varphi \in \widetilde{E}^{\Gal(\widetilde{L}/L)}$ is and only if the
coefficients of the morphism can be chosen to be in $L$, i.e. if
$\varphi \in D$, hence the first equality. The second follows from the
fact that an element in the center of $\widetilde{D}$ is also in the
center of $D$ (because $D \subset \widetilde{D}$).

\vspace{2pt}
\noindent{\bf Fact 2:} if $E \subset \widetilde{E}$ then $\dim(\rhoA) = \dim(\rhoAt) \frac{\tilde d}{d}$.
\vspace{2pt}

After choosing an extension of each element in $\Gal(L/K)$ to
$\widetilde{L}$ (any will do),
$\Gal(\widetilde{L}/K)=\Gal(\widetilde{L}/L)\Gal(L/K)$, hence
\[
\widetilde{E}_0=\widetilde{E}^{\Gal(\widetilde{L}/K)}=(\widetilde{E}^{\Gal(\widetilde{L}/L)})^{\Gal(L/K)} = E^{\Gal(L/K)}=E_0,
\]
where the middle equality comes from Fact 1 (since
$\widetilde{E}\cap E=E$ under our hypothesis). The statement follows
from Lemma~\ref{lemma:dimension}.

\vspace{2pt}
\noindent{\bf Fact 3:} If $\tilde d = d$, then $E\subseteq \tilde E$.
\vspace{2pt}

This is \cite[Lemma~7.4.2]{MR3904148}, but the proof is much simpler
for division algebras. Consider the subalgebra
$D\otimes_E E\tilde E\subset \tilde D$ obtained by extending scalars
to the field compositum $E\tilde E$, and note that
\[
	\dim_{\tilde E} D\otimes_E E\tilde E =\dim_E D\cdot [E\tilde E:\tilde E] = d^2[E\tilde E:\tilde E] \leq \dim_{\tilde E}\tilde D=\tilde d^2.
\]
The equality $d=\tilde d$ implies $[E\tilde E:\tilde E]=1$, hence
$E\subseteq \tilde E$.

The previous three facts imply that $\dim(\rhoA) = \dim(\rhoAt)$ if
and only if either $d=\tilde d$ or $[E_0:\widetilde{E}_0]=\tilde d/d$:
suppose that the equality of dimensions holds, then 
$[E_0:\widetilde{E}_0]=\tilde d/d$.

Conversely, if $[E_0:\widetilde{E}_0]=\tilde d/d$, then comparing the
dimensions again yields the equality. Finally, suppose that
$\tilde d=d$. Then by Fact 3 we have $E\subseteq \tilde E$, and Fact 2
shows $\dim(\rhoA)=\dim(\rhoAt)$.
\end{proof}

\begin{theorem}
  Suppose that $d$ and $\tilde d\leq 2$.
  Keeping the previous notation, we get the following relation between
  $\dim(\rhoA)$ and $\dim(\rhoAt)$:
  \begin{itemize}
  \item They are the same if either: $\dim_{\tilde E}\tilde{D} = \dim_E D$ or $[E_0:\widetilde{E}_0]=2$,
  \item $\dim(\rhoA)=2\dim(\rhoAt)$ if
    $\dim_{\tilde E}\tilde{D}\neq \dim_E D$ and either
    $E \subset \widetilde{E}$ or $E_0 = \widetilde{E}_0$.
  \end{itemize}
\label{thm:dimension2}
\end{theorem}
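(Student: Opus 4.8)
This is a case-by-case specialization of Theorem~\ref{thm:dimension} under the hypothesis $d,\tilde d\le 2$, so the strategy is to unwind what the two possibilities $\tilde d/d\in\{1,2\}$ force, using the three Facts established in the proof of Theorem~\ref{thm:dimension} and Lemma~\ref{lemma:degree E0 tildeE0}. Recall from Lemma~\ref{lemma:dimension} that $\dim(\rhoA)=\frac{2g}{d[E_0:\QQ]}$ and $\dim(\rhoAt)=\frac{2g}{\tilde d[\widetilde E_0:\QQ]}$, so the ratio is
\[
\frac{\dim(\rhoA)}{\dim(\rhoAt)}=\frac{\tilde d}{d}\cdot\frac{1}{[E_0:\widetilde E_0]}.
\]
By Lemma~\ref{lemma:degree E0 tildeE0} we know $[E_0:\widetilde E_0]\mid \tilde d/d$, and since $d,\tilde d\le 2$ the only nontrivial value of $\tilde d/d$ is $2$ (attained precisely when $\dim_{\tilde E}\tilde D=\dim_E D\cdot\text{[something]}$ forces $\tilde d=2,d=1$; note $\dim_E D = d^2$ and $\dim_{\tilde E}\tilde D=\tilde d^2$, so $\dim_{\tilde E}\tilde D=\dim_E D$ is equivalent to $\tilde d=d$ here). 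Hence the ratio is a priori $1$ or $2$, and the whole theorem is the bookkeeping of which case occurs.

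\textbf{First bullet.} If $\dim_{\tilde E}\tilde D=\dim_E D$, then $\tilde d=d$ (as both are $\le 2$), so $\tilde d/d=1$ and the ratio is $1/[E_0:\widetilde E_0]$; but $[E_0:\widetilde E_0]\mid \tilde d/d=1$ forces $E_0=\widetilde E_0$ and the ratio is $1$. (Alternatively invoke Fact~3 and Fact~2 directly, exactly as at the end of the proof of Theorem~\ref{thm:dimension}.) If instead $[E_0:\widetilde E_0]=2$, then since $[E_0:\widetilde E_0]\mid\tilde d/d\le 2$ we must have $\tilde d/d=2$, so the ratio is $2/2=1$. In both sub-cases $\dim(\rhoA)=\dim(\rhoAt)$.

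\textbf{Second bullet.} Assume $\dim_{\tilde E}\tilde D\ne\dim_E D$, i.e. $\tilde d\ne d$, which under $d,\tilde d\le 2$ means $d=1$, $\tilde d=2$, so $\tilde d/d=2$. It remains to show $[E_0:\widetilde E_0]=1$ in either of the two stated situations. If $E\subset\widetilde E$, this is exactly Fact~2 (which concludes $\widetilde E_0=E_0$). If $E_0=\widetilde E_0$, this is immediate by hypothesis. In either case the ratio is $2/1=2$, so $\dim(\rhoA)=2\dim(\rhoAt)$.

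\textbf{Main obstacle.} There is essentially no obstacle here: the statement is a corollary of Theorem~\ref{thm:dimension} plus the arithmetic constraint $d,\tilde d\le 2\Rightarrow \tilde d/d\in\{1,2\}$, and all the hard work (Facts~1--3, the Grunwald--Wang argument in Lemma~\ref{lemma:degree E0 tildeE0}) is already done. The only thing to be careful about is phrasing the hypothesis $\dim_{\tilde E}\tilde D=\dim_E D$ correctly as equivalent to $\tilde d=d$ in this bounded range, and noting that the two conditions in the first bullet are genuinely different triggers (one forcing $\tilde d=d$, the other forcing $\tilde d=2d$ with a compensating field degree), both landing on equal dimensions. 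I would therefore write the proof as a two- or three-line reduction to the formula above together with a citation to Theorem~\ref{thm:dimension} and Lemma~\ref{lemma:degree E0 tildeE0}.
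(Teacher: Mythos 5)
Your proposal is correct and follows essentially the same route as the paper: both reduce to the ratio $\dim(\rhoA)/\dim(\rhoAt)=\frac{\tilde d}{d}\cdot[E_0:\widetilde{E}_0]^{-1}$ and settle the cases using Facts 2--3 from the proof of Theorem~\ref{thm:dimension} (with $A_{\tilde L}$ simple understood as a standing hypothesis, exactly as the paper needs). The only cosmetic difference is that where you invoke the divisibility $[E_0:\widetilde{E}_0]\mid\tilde d/d$ from Lemma~\ref{lemma:degree E0 tildeE0} to handle the case $[E_0:\widetilde{E}_0]=2$, the paper instead re-derives the bound $[E_0:\widetilde{E}_0]\le 2$ directly from the fixed-field identities in the subcase $E\cap\widetilde{E}\subsetneq E$; both arguments are valid.
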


\begin{proof}
  If $D$ and $\widetilde{D}$ have the same type, then
  $E=Z(D)\subset Z(\widetilde{D})=\widetilde{E}$ and
  $d = \tilde d$ so the statement follows from Fact
  2 (of the previous theorem). Otherwise, it must be the case that $\widetilde{D}$ is a
  quaternion algebra, while $D$ is abelian (in which case
  $\tilde d=2$ and $d=1$). If $E \subset \widetilde{E}$
  then the statement follows once again from Fact 2. Suppose then that
  $E \cap \widetilde{E} \subsetneq E$ (so
  $[E : E \cap \widetilde{E}]=2$). Then
\[
  [E_0:E_0 \cap \widetilde{E}_0] = [E^{\Gal(\widetilde{L}/K)}:E^{\Gal(\widetilde{L}/K)} \cap \widetilde{E}^{\Gal(\widetilde{L}/K)}] \le [E:E \cap \widetilde{E}]=2.
\]
On the other hand,
\[
\widetilde{E}_0=\widetilde{E}^{\Gal(\widetilde{L}/K)} = (\widetilde{E}^{\Gal(\widetilde{L}/L)})^{\Gal(L/K)} = (\widetilde{E}\cap E)^{\Gal(\widetilde{L}/K)}=\widetilde{E}_0\cap E_0.
\]
In particular, $[E_0:\widetilde{E}_0] \le 2$. If it equals $1$, then
$\dim(\rho_{A,\lambda})=2\dim(\widetilde{\sigma_{A,\lambda}})$ by
Lemma~\ref{lemma:dimension}, while if it equals $2$, then both
dimensions are the same as claimed.
\end{proof}

\section{Equivariant pairings}
\label{section:pairings}
It is natural to wonder whether/when
the constructed representation preserves a bilinear form.
Let $K$ be a field. A subgroup $G\subseteq \GL_n(K)$ is
called irreducible if there are no non-trivial proper $G$-invariant
subspaces of $K^n$. We have the following variant of Schur's lemma.

\begin{proposition}\label{lemma:pairing-schur}
  Let $G$ be an irreducible group, and let $\chi:G\to K^\times$,
  ${}^\dagger:G\to GL_n(K)$ be any two maps. Let $M,N\in \GL_n(K)$ be
  matrices satisfying
  \[
    \begin{cases}
      B^\dagger MB &= \chi(B) M\\
      B^\dagger NB &= \chi(B) N.
    \end{cases}
  \]
  for all $B\in G$.  Then, there exists a unique $\lambda\in K^\times$
  such that $M=\lambda N$.
\end{proposition}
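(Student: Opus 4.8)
The plan is to eliminate the auxiliary map ${}^\dagger$ from the two hypotheses, producing a single matrix that centralizes $G$, after which the conclusion follows from Schur's lemma. Since $M$ is invertible and $\chi(B)\in K^\times$, the relation $B^\dagger M B=\chi(B)M$ can be solved for $B^\dagger$, giving $B^\dagger=\chi(B)\,MB^{-1}M^{-1}$; in the same way the second relation gives $B^\dagger=\chi(B)\,NB^{-1}N^{-1}$. Equating these two expressions and cancelling the scalar $\chi(B)$ yields $MB^{-1}M^{-1}=NB^{-1}N^{-1}$ for every $B\in G$.

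Next I would multiply this identity on the left by $N^{-1}$ and on the right by $M$, obtaining $(N^{-1}M)B^{-1}=B^{-1}(N^{-1}M)$. Since $B\mapsto B^{-1}$ is a bijection of $G$ onto itself, the matrix $P:=N^{-1}M$ commutes with every element of $G$, i.e.\ $P\in\End_{K[G]}(K^n)$. By irreducibility of $G$ and Schur's lemma, this centralizer is a division algebra over $K$; in the situations where we apply the proposition the underlying $G$-module is in fact \emph{absolutely} irreducible (it is one of the representations $W_\lambda$, $\widetilde{W_\lambda}$ or $\mathcal V_\lambda$ appearing in Theorems~\ref{theorem:pairing-existence} and~\ref{thm:irreducibility-dimension}), so the centralizer equals $K$ and hence $P=\lambda\cdot\mathrm{Id}$ for some $\lambda\in K$. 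As $M$ is invertible, $\lambda\neq0$, so $M=\lambda N$, and $\lambda$ is unique because $N$ is invertible.

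The argument is almost entirely formal, and the only step that deserves attention is this last appeal to Schur's lemma: over a field $K$ that is not algebraically closed, irreducibility alone forces $P$ only into some division algebra $D\supseteq K$, so it is absolute irreducibility of $K^n$ as a $G$-module that is actually used to conclude that $P$ is a scalar. Since that hypothesis is available each time the proposition is invoked, this is not a genuine obstacle, but it is worth recording where the statement is used.
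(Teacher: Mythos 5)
Your proof is correct, and it takes a genuinely different route from the paper's. You eliminate the auxiliary map ${}^\dagger$ and show that $P=N^{-1}M$ commutes with every element of $G$, then conclude by Schur's lemma; the paper instead sets $p(x)=\det(M-xN)$, picks a root $\lambda\in\overline{K}$, and uses the two similitude relations to check that $\ker(M-\lambda N)$ is $G$-stable, whence by irreducibility it is the whole space, $M=\lambda N$, and uniqueness is read off from $p(x)=(\lambda-x)^n$. The two arguments are close cousins (the roots of $p$ are exactly the eigenvalues of your $P$), but yours separates the formal elimination step from the representation-theoretic input, at the price of needing the centralizer to equal $K$, i.e.\ absolute irreducibility --- a caveat you rightly flag: with only $K$-irreducibility in the paper's sense the statement genuinely fails, e.g.\ $G=\mathrm{SO}_2(\mathbb{R})\subset\GL_2(\mathbb{R})$ with ${}^\dagger$ the transpose, $\chi\equiv 1$, $M$ the identity and $N=\bigl(\begin{smallmatrix}0&1\\-1&0\end{smallmatrix}\bigr)$. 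Note that the paper's own proof carries the same hidden assumption: when $\lambda\notin K$ the subspace $\ker(M-\lambda N)$ lives in $\overline{K}^n$, so one again needs invariance-freeness over $\overline{K}$ (or a root of $p$ in $K$) rather than the stated $K$-irreducibility. Since every invocation in the paper is to images of absolutely irreducible representations ($W_\lambda$, $\widetilde{W_\lambda}$, $\mathcal V_\lambda$), both arguments are sound in context, and your explicit remark about where absolute irreducibility enters is a useful sharpening rather than a defect.
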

\begin{proof}
  Let $p(x)=\det(M-xN)\in K[x]$, a polynomial of degree $n$ and
  leading coefficient $(-1)^n\det N\neq 0$. Let
  $\lambda\in\overline{K}$ be a root of $p(x)$. Clearly
  $\lambda\neq 0$, since $M$ would be singular otherwise. Let
  $v\in\ker(M-\lambda N)$ be any nonzero vector. Then, for every
  $B\in G$ we have
  \[
    (M-\lambda N)Bv = \chi(B) (B^\dagger)^{-1} (M-\lambda N)v = 0,
  \]
  hence $G$ stabilizes $\ker(M-\lambda N)$. But this is a nontrivial
  subspace of $K^n$, and therefore (since $G$ is irreducible)
  $\ker(M-\lambda N)=K^n$, so $M=\lambda N$ and $\lambda\in
  K^\times$. Uniqueness follows from the equality 
  $p(x)=(\lambda-x)^n$.
\end{proof}

The last proposition implies that once a similitude
character and a type are fixed, there exists at most one 
$G$-equivariant pairing with the given type and similitude character.

\begin{corollary}
  Let $G$ be an irreducible subgroup of $\GL_n(K)$, $V=K^n$, and let
  $\Psi_i:V\times V\to K$, $i=1,2$ be two biadditive form. Suppose
  that there exists a character $\chi:G\to K^\times$ such that, for
  every $g\in G$ and all $v,w\in V$,
  \[
    \Psi_i(gv,gw) = \chi(g)\Psi_i(v,w), \text{ for }i=1,2.
  \]
  If both $\Psi_1$ and $\Psi_2$ are alternating (resp. symmetric, or
  hermitian), then there exists $\lambda\in K^\times$ such that
  $\Psi_1=\lambda\Psi_2$.
\end{corollary}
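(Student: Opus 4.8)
The plan is to deduce this from Proposition~\ref{lemma:pairing-schur} by passing to Gram matrices. Since $V=K^{n}$ and $G\subseteq\GL_n(K)$ are already presented in coordinates, each biadditive form $\Psi_i$ is recorded by a Gram matrix. Before passing to matrices I would observe that the hypotheses force each $\Psi_i$ to be nondegenerate unless it vanishes: its left radical $\{v\in V:\Psi_i(v,w)=0\text{ for all }w\}$ is $G$-stable, because the similitude relation $\Psi_i(gv,gw)=\chi(g)\Psi_i(v,w)$ together with the invertibility of $g$ shows that $v$ lies in it if and only if $gv$ does; as $G$ is irreducible this radical equals $0$ or $V$, and in the first case the Gram matrix of $\Psi_i$ is invertible while in the second $\Psi_i=0$. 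Discarding the degenerate possibility, which is not the case of interest, I may assume that $\Psi_1$ and $\Psi_2$ are both nondegenerate.

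In the alternating and symmetric cases, write $\Psi_i(v,w)=v^{\top} M_i w$ with $M_i\in\GL_n(K)$. The identity $\Psi_i(gv,gw)=\chi(g)\Psi_i(v,w)$, valid for all $v,w\in V$, is equivalent to $g^{\top} M_i g=\chi(g) M_i$ for every $g\in G$. I then invoke Proposition~\ref{lemma:pairing-schur} with the map ${}^{\dagger}\colon G\to\GL_n(K)$ given by $g\mapsto g^{\top}$, the character $\chi$, and the pair $(M,N)=(M_1,M_2)$; note that no multiplicativity of ${}^{\dagger}$ or of $\chi$ is needed. It yields a unique $\lambda\in K^{\times}$ with $M_1=\lambda M_2$, equivalently $\Psi_1=\lambda\Psi_2$.

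In the hermitian case, let $x\mapsto\overline{x}$ be the involution of $K$, extended entrywise to vectors and matrices, and write $\Psi_i(v,w)=v^{\top} H_i\overline{w}$ with $H_i\in\GL_n(K)$ satisfying $\overline{H_i}^{\top}=H_i$. Now the similitude relation becomes $g^{\top} H_i\overline{g}=\chi(g) H_i$, which is not literally of the shape $B^{\dagger} M B=\chi(B) M$. To remedy this I would substitute $B:=\overline{g}$ and work with the conjugate group $\overline{G}=\{\overline{g}:g\in G\}$: it is a subgroup of $\GL_n(K)$ because $\overline{g_1 g_2}=\overline{g_1}\,\overline{g_2}$, and it is irreducible because if $W\subseteq K^{n}$ is $\overline{G}$-stable then $\{\overline{w}:w\in W\}$ (again a $K$-subspace, since $a\,\overline{w_1}+\overline{w_2}=\overline{\overline{a}w_1+w_2}$) is $G$-stable, hence $0$ or $V$. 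With ${}^{\dagger}\colon\overline{G}\to\GL_n(K)$ defined by $B\mapsto\overline{B}^{\top}$ and $\chi'\colon\overline{G}\to K^{\times}$ defined by $\chi'(\overline{g})=\chi(g)$, the relation reads $B^{\dagger} H_i B=\chi'(B) H_i$ for all $B\in\overline{G}$, and Proposition~\ref{lemma:pairing-schur} again produces a unique $\lambda\in K^{\times}$ with $H_1=\lambda H_2$, i.e. $\Psi_1=\lambda\Psi_2$. One can moreover note that $\overline{H_i}^{\top}=H_i$ forces $\overline{\lambda}=\lambda$, so $\lambda$ lies in the fixed field of the involution, though this refinement is not part of the statement.

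I do not expect a genuine obstacle: the argument is a bookkeeping reduction to Proposition~\ref{lemma:pairing-schur}. The two points that deserve care are the irreducibility argument that makes the Gram matrices invertible, so that the proposition applies at all, and, in the hermitian case, the harmless passage to the conjugate group $\overline{G}$ needed to cast the invariance identity in the required form $B^{\dagger} M B=\chi(B) M$.
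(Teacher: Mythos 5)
Your argument is correct and is essentially the paper's own proof, which simply states that the corollary follows from Proposition~\ref{lemma:pairing-schur}; you are just supplying the routine bookkeeping (Gram matrices with $\dagger$ the transpose, resp.\ the conjugate-transpose after passing to the conjugate group $\overline{G}$ in the hermitian case, plus the observation that the $G$-stable radical forces each form to be nondegenerate or zero). Note also that nondegeneracy is already built into the paper's definition of symplectic, symmetric and hermitian forms, so the degenerate case you discard does not actually arise.
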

\begin{proof} Follows from the proposition.
\end{proof}

\begin{proposition}\label{proposition:pairing-extension}
  Let $G$ be a subgroup of $\GL_n(K)$ and let $H$ be a normal
  irreducible subgroup of $G$. Let $V={K}^n$. Suppose that
  there exists a biadditive $H$-invariant form $\Psi:V\times V\to K$
  with similitude character $\chi:H\to K^\times$, i.e. for every
  $h\in H$ and all $v,w\in V$,
  \[
    \Psi(hv,hw) = \chi(h)\Psi(v,w).
  \]
  Suppose that $\Psi$ is alternating, symmetric or hermitian.  Then
  the following are equivalent:
  \begin{enumerate}
  \item The form $\Psi$ is $G$-invariant (for some similitude character).
    
  \item The character $\chi$ extends to a character
    $\widetilde{\chi}:G \to {K}^\times$.
    
  \item For all $g \in G$, $h \in H$, $\chi(ghg^{-1})=\chi(h)$.
  \end{enumerate}
\end{proposition}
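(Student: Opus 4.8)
The plan is to prove the chain of implications $(1)\Rightarrow(3)\Rightarrow(2)\Rightarrow(1)$, using Proposition~\ref{lemma:pairing-schur} as the essential tool, together with the hypothesis that $H$ is irreducible.

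For $(1)\Rightarrow(3)$: assume $\Psi$ is $G$-invariant with some similitude character $\widetilde\chi:G\to K^\times$. Restricting to $H$ and invoking uniqueness (Proposition~\ref{lemma:pairing-schur}, or rather the remark following it: a similitude character, once a pairing and type are fixed, is uniquely determined), we get $\widetilde\chi|_H=\chi$. Now for $g\in G$ and $h\in H$, writing the invariance relation for $ghg^{-1}$ and using that $\widetilde\chi$ is a homomorphism gives $\chi(ghg^{-1})=\widetilde\chi(ghg^{-1})=\widetilde\chi(g)\widetilde\chi(h)\widetilde\chi(g)^{-1}=\widetilde\chi(h)=\chi(h)$, which is exactly $(3)$.

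For $(3)\Rightarrow(2)$: this is the step I expect to require the most care. The idea is that for each $g\in G$, the form $\Psi^g(v,w):=\Psi(gv,gw)$ is again an $H$-invariant biadditive form of the same type (alternating, symmetric, or hermitian --- here one checks the type is preserved, which is immediate in the symplectic/symmetric cases and uses that $g$ commutes with the involution appropriately in the hermitian case, or rather that conjugation by $g$ does not disturb the hermitian structure since $H$ is normal), and its similitude character is $h\mapsto \chi(ghg^{-1})=\chi(h)$ by hypothesis $(3)$. By the corollary to Proposition~\ref{lemma:pairing-schur} applied to the irreducible group $H$, there is a unique $\mu(g)\in K^\times$ with $\Psi^g=\mu(g)\Psi$. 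A direct computation shows $g\mapsto\mu(g)$ is a homomorphism $G\to K^\times$ (using $\Psi^{g_1g_2}=(\Psi^{g_2})^{g_1}$ and uniqueness of the scalar), and it restricts to $\chi$ on $H$ since $\Psi$ is $H$-invariant with similitude character $\chi$. Thus $\widetilde\chi:=\mu$ is the desired extension.

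For $(2)\Rightarrow(1)$: suppose $\chi$ extends to $\widetilde\chi:G\to K^\times$. Fix $g\in G$. As above, $\Psi^g$ is an $H$-invariant form of the same type, and by $(3)$ --- which follows from $(2)$ since $\widetilde\chi$ is a homomorphism, $\chi(ghg^{-1})=\widetilde\chi(h)=\chi(h)$ --- its similitude character under $H$ equals $\chi$; hence by uniqueness (corollary to Proposition~\ref{lemma:pairing-schur}) $\Psi^g=\mu(g)\Psi$ for a scalar $\mu(g)$, and one checks $\mu(g)=\widetilde\chi(g)$ by evaluating on $H$ via the cocycle/homomorphism property, so $\Psi$ is $G$-invariant with similitude character $\widetilde\chi$. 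The main obstacle throughout is bookkeeping in the hermitian case: one must make sure that conjugating the pairing by an element of $G$ still yields a hermitian (not, say, anti-hermitian or hermitian for a different involution) form, which is where normality of $H$ in $G$ and the fact that $G$ acts by $K$-linear maps compatibly with the involution on $K$ get used; the symplectic and symmetric cases are formally identical and cause no trouble.
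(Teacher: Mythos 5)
Your proof is correct and uses essentially the same mechanism as the paper: transport the form by $g$ (the paper writes this in matrix form as $g^\top J g$), use normality of $H$ together with hypothesis (3) to see that the transported form is again $H$-invariant of the same type with character $\chi$, and invoke the uniqueness in Proposition~\ref{lemma:pairing-schur} to produce the scalar $\mu(g)$, which is then automatically a character of $G$ extending $\chi$. One small correction to your $(2)\Rightarrow(1)$ step: the similitude character of $\Psi$ on $G$ is the specific extension $\mu$ produced by the argument and need not equal the arbitrarily chosen extension $\widetilde{\chi}$ (the two may differ by a character of $G/H$), but this is harmless because $(1)$ only asks for $G$-invariance with respect to \emph{some} similitude character.
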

\begin{proof} It is clear that $(1) \Rightarrow (2) \Rightarrow (3)$
  (the character $\widetilde{\chi}$ being the similitude character).
  Suppose that the biadditive form $\Psi$ is alternating or
  symmetric. Then, there exists an antisymmetric matrix $J\in\GL_n(K)$
  such that
  \[
    \Psi(v,w)=v^\top J w.
  \]
  The invariance property translates into $h^\top J h = \chi(h)J$ for
  all $h\in H$. Since $H$ is normal in $G$, for all $g\in G$ and all
  $h\in H$ we have $ghg^{-1}\in H$, and hence
  \[
    (ghg^{-1})^\top J (ghg^{-1}) = \chi(ghg^{-1})J=\chi(h)J.
  \]
  By direct manipulation we obtain the equality
  \( h^\top (g^\top J g)h = \chi(h) g^\top Jg \).
  Proposition~\ref{lemma:pairing-schur} (with $h^\dagger = h^\top$,
  $M=g^\top Jg$ and $N=J$) implies that there is a unique
  $\tilde \chi(g)\in K^\times$ such that
  $$g^\top Jg = \tilde \chi(g)J.$$
  The similitude map $g\mapsto \tilde\chi(g)$ is actually a character, since
  $(gg')^\top J gg' = \tilde\chi(gg')J =
  \tilde\chi(g)\tilde\chi(g')J$. By uniqueness of each
  $\tilde\chi(g)$, we obtain $\tilde\chi|_H=\chi$.
	
  The hermitian case works the same replacing $g^\top$ by
  $\overline{g}^\top$, where $\bar\cdot$ is the involution of $K$.
\end{proof}

\begin{remark}
The proposition implies that the representation
$\widetilde{\rho_\lambda}$ attached to a strong $K$-variety preserves
the bilinear form  $\Psi$  from Theorem~\ref{theorem:pairing-existence}
precisely when the twisting character $\psi$ 
(as in Theorem~\ref{theorem:extension}) satisfies that
$\psi^2(ghg^{-1})=\psi^2(h)$ for all $h \in \Gal_L$, $k \in \Gal_K$.

The same remark implies that when $L/K$ is cyclic, $\psi$ can be taken to be trivial,
so in this case the representation always preserves a bilinear form.
\label{remark:strong-bilinear}
\end{remark}

A similar result holds while studying existence of an
invariant bilinear form for the induction of a representation.

\begin{proposition}
  Let $G$ be a group and $H\vartriangleleft G$ be a finite index
  normal subgroup. Let $K$ be a field and let $W$ be a finite
  dimensional $K$-vector space. Let $\rho:H \to \GL(W)$ be a
  representation and let $\Psi:W \times W \to K$ be an additive
  $H$-invariant form with similitude character $\chi$. Let
  \begin{equation}
    \label{eq:induction-space}
V = \sum_{h \in G/H} hW,    
  \end{equation}
be the underlying $K$-vector space of the induced representation
$\Ind_H^G \rho$.  Then the following are equivalent:
\begin{enumerate}
\item There exists a $K$-bilinear, $G$-equivariant non-degenerate
  pairing $\widetilde{\Psi}$ on $V$ whose restriction to $W$ equals $\Psi$.
\item The character $\chi$ extends to a character of $G$.
\end{enumerate}
\label{prop:extension-induction}  
\end{proposition}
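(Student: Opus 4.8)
The implication $(1) \Rightarrow (2)$ is the easy direction: if $\widetilde{\Psi}$ is $G$-equivariant with some similitude character $\widetilde{\chi}:G \to K^\times$, then restricting the relation $\widetilde{\Psi}(gv,gw) = \widetilde{\chi}(g)\widetilde{\Psi}(v,w)$ to $v,w \in W$ and to $g \in H$ shows that $\widetilde{\chi}|_H$ is a similitude character for the restriction $\widetilde{\Psi}|_{W \times W} = \Psi$; by uniqueness of the similitude character of a nondegenerate pairing on the (absolutely irreducible, hence by Proposition~\ref{lemma:pairing-schur}) $H$-module $W$ — or simply by the hypothesis that $\Psi$ has similitude character $\chi$ — we get $\widetilde{\chi}|_H = \chi$, so $\chi$ extends.

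For $(2) \Rightarrow (1)$, suppose $\widetilde{\chi}: G \to K^\times$ extends $\chi$. The plan is to write down an explicit pairing on $V = \bigoplus_{g \in G/H} gW$. Fix a set of coset representatives $\{g_1, \dots, g_r\}$ for $G/H$ with $g_1 = 1$. An element of $V$ is a tuple $(w_i)_i$ with $w_i \in W$, thought of as $\sum_i g_i w_i$; the $G$-action permutes the summands (with an $H$-twist) in the standard way. Define
\[
  \widetilde{\Psi}\Bigl(\sum_i g_i v_i,\ \sum_j g_j w_j\Bigr)
  := \sum_{i} \widetilde{\chi}(g_i)^{-1}\, \Psi(v_i, w_i),
\]
where the choice of the weight $\widetilde{\chi}(g_i)^{-1}$ is exactly what makes the form $G$-equivariant (and is forced, up to the diagonal constant, by the requirement that distinct summands $g_iW$ and $g_jW$ be orthogonal and that $\widetilde{\Psi}|_{W\times W} = \Psi$). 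One then checks: (a) $\widetilde{\Psi}$ is $K$-bilinear (if $\Psi$ is bilinear; in the hermitian case one uses $K$-linearity in the first entry and the involution in the second, and $\widetilde{\chi}$ should be chosen to be compatible with the involution, i.e. land in the fixed field or satisfy $\overline{\widetilde{\chi}} = \widetilde{\chi}$ — this point needs care, see below); (b) $\widetilde{\Psi}$ is nondegenerate, since it is block-diagonal with each block a scalar multiple of the nondegenerate $\Psi$; (c) its restriction to $W = g_1 W$ is $\widetilde{\chi}(1)^{-1}\Psi = \Psi$; and (d) it is $G$-equivariant with similitude character $\widetilde{\chi}$. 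For (d), take $g \in G$: then $g g_i = g_{\pi(i)} h_i$ for a permutation $\pi$ of $\{1,\dots,r\}$ and elements $h_i \in H$, with the cocycle identity $\widetilde{\chi}(g)\widetilde{\chi}(g_i) = \widetilde{\chi}(g_{\pi(i)})\widetilde{\chi}(h_i)$ coming from $\widetilde{\chi}$ being a character extending $\chi$; substituting and using $\Psi(h_i v_i, h_i w_i) = \chi(h_i)\Psi(v_i,w_i)$ collapses the sum to $\widetilde{\chi}(g) \widetilde{\Psi}(\sum g_i v_i, \sum g_j w_j)$ after reindexing by $\pi$.

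The main obstacle I expect is bookkeeping in the hermitian case and the symmetry/alternating type of $\widetilde{\Psi}$. One must verify that $\widetilde{\Psi}$ inherits the \emph{same} type as $\Psi$ (alternating, symmetric, or hermitian): for the diagonal blocks this is immediate, and since off-diagonal blocks vanish there is nothing more to check for the symmetric/alternating cases. In the hermitian case, however, $K$ carries an involution and the coefficients $\widetilde{\chi}(g_i)^{-1}$ must interact correctly with it — one wants $\widetilde{\Psi}(v,w) = \overline{\widetilde{\Psi}(w,v)}$, which on each block reads $\widetilde{\chi}(g_i)^{-1}\Psi(v_i,w_i) = \overline{\widetilde{\chi}(g_i)^{-1}} \cdot \overline{\Psi(w_i,v_i)} = \overline{\widetilde{\chi}(g_i)}^{-1}\Psi(v_i,w_i)$, forcing $\widetilde{\chi}(g_i) = \overline{\widetilde{\chi}(g_i)}$, i.e. $\widetilde{\chi}$ should take values in the fixed field of the involution. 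So the statement implicitly requires (or one should arrange) that the extension $\widetilde{\chi}$ can be chosen with values in $K^{\bar\cdot = 1}$; since $\chi$ itself is the similitude character of a hermitian form it already has this property, and the obstruction to extending it with this constraint again lives in $H^2$ of the relevant quotient and is handled as in Theorem~\ref{theorem:extension}. The remaining verifications (bilinearity, nondegeneracy, restriction, equivariance) are the routine calculations sketched above and I would present them compactly.
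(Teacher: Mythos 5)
Your overall strategy is the same as the paper's: you prove $(1)\Rightarrow(2)$ by restricting the similitude character of $\widetilde{\Psi}$ to $H$ acting on $W\times W$, and $(2)\Rightarrow(1)$ by declaring the distinct translates of $W$ pairwise orthogonal and rescaling $\Psi$ on each diagonal block by a value of the extended character. However, your explicit weight is inverted, and with your own conventions the equivariance check you outline in (d) does not close. Write $x=\sum_i g_iv_i$, $y=\sum_i g_iw_i$ and $gg_i=g_{\pi(i)}h_i$, so the $g_{\pi(i)}W$-component of $g\cdot x$ is $\rho(h_i)v_i$. For any block-diagonal form with $\widetilde{\Psi}(g_iu,g_iv)=c_i\Psi(u,v)$ one gets
\[
\widetilde{\Psi}(gx,gy)=\sum_i c_{\pi(i)}\,\chi(h_i)\,\Psi(v_i,w_i),\qquad \chi(h_i)=\widetilde{\chi}(g_{\pi(i)})^{-1}\widetilde{\chi}(g)\widetilde{\chi}(g_i),
\]
and the sum collapses to $\widetilde{\chi}(g)\widetilde{\Psi}(x,y)$ precisely when $c_i=\widetilde{\chi}(g_i)$ (the paper's choice), not $c_i=\widetilde{\chi}(g_i)^{-1}$. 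With your weights the coefficient of $\Psi(v_i,w_i)$ becomes $\widetilde{\chi}(g)\widetilde{\chi}(g_{\pi(i)})^{-2}\widetilde{\chi}(g_i)$, which is proportional to $\widetilde{\chi}(g_i)^{-1}$ only when $\chi(h_i)^2$ is independent of $i$; this already fails for $H=\langle g^3\rangle\subset G=\langle g\rangle\simeq \ZZ/9\ZZ$ with $\chi$ of order $3$. So the parenthetical claim that the inverse weight is ``forced'' is also wrong: what is forced by orthogonality of the translates, equivariance and the normalization on $W$ is $c_i=\widetilde{\chi}(g_i)$. This is a local, fixable slip, but step (d) as written would not go through.

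Two smaller remarks. In $(1)\Rightarrow(2)$ you do not need irreducibility of $W$ or Proposition~\ref{lemma:pairing-schur} (and $\rho$ is not assumed irreducible here): nondegeneracy of $\Psi$ alone forces $\widetilde{\chi}|_H=\chi$, which is exactly how the paper argues. Your hermitian digression addresses a requirement the proposition does not impose: statement (1) only asks for a $G$-equivariant nondegenerate pairing on $V$ restricting to $\Psi$, not one of the same type, so no condition that $\widetilde{\chi}$ take values in the fixed field of the involution is needed. That condition becomes relevant only if one additionally wants $\widetilde{\Psi}$ to be hermitian; your observation that $\chi$ itself is fixed by the involution is correct, but the assertion that a constrained extension always exists is neither justified nor required for the statement.
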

\begin{proof}
  Start supposing that there exists a $G$-equivariant pairing
  $\widetilde{\Psi}:V\times V\to K$ with similitude character $\widetilde{\chi}$
  such that $\widetilde{\Psi}|_{W \times W} = \Psi$ (identifying $W$ with the
  susbpace corresponding to $h=1$ in \eqref{eq:induction-space}). The
  equivariance property implies that, for all
  $u,v\in W$ and all $g\in G$,
  \[
    \widetilde{\Psi}(g \cdot u,g \cdot v) =
    \widetilde{\chi}(g)\widetilde{\Psi}(u,v) = \widetilde{\chi}(g)\Psi(u,v).
  \]
  If $g \in H$ then the left hand side also equals $\chi(g)\Psi(u,v)$
  so (since $\Psi$ is non-degenerate)
  $\chi(g) = \widetilde{\chi}(g)$ for $g \in H$; $\widetilde{\chi}$
  is the desired extension.
  
  Conversely, suppose that $\chi$ extends to a character
  $\widetilde{\chi}$ of $G$. Let $\{h_1,\ldots,h_n\}$ be
  representatives for $G/H$ with $h_1=1$. Define a bilinear pairing
  $\widetilde{\Psi}:V\times V\to K$ by decreeing that:
  \begin{enumerate}
  \item $\widetilde{\Psi}(h_iW,h_jW) = 0$ if $i \neq j$,
    
  \item $\widetilde{\Psi}(h_i u,h_i v) := \widetilde{\chi}(h_i) \Psi(u,v)$ for $i=1,\ldots,n$.
  \end{enumerate}
  The pairing $\widetilde{\Psi}$ thus defined satisfies the expected properties,
  namely: it is $G$-equivariant, non-degenerate and its restriction to
  $W\times W$ coincides with $\Psi$.

\end{proof}

\begin{remark}
  If the similitude character of $\chi$ extends to $G$, its extension
  need not be unique. There are as many extensions as characters of
  the group $G/H$, and each extension gives a different pairing.
\end{remark}

Propositions~\ref{proposition:pairing-extension} and
\ref{prop:extension-induction} imply that the representation
$\rho_{A,\lambda}$ preserves an extension of the pairing obtained in
Theorem~\ref{theorem:pairing-existence} if and only if the character
$\psi$ of Theorem~\ref{theorem:extension} (or a multiple of it by the square of a character of $\Gal_{L'}$) extends to $\Gal_K$.

\section{Applications}

\subsection{Some atypical images of Galois representations}

Let $A$ be an abelian fourfold defined over a number field $K$. Let
$L/K$ be a finite Galois extension such that
$\End^0(A_{\overline{K}}) = \End^0(A_L)$. Assume the following two
conditions:
\begin{enumerate}
\item $F=\End^0(A)$ is an imaginary quadratic field, and
\item $\End^0(A_L)$ is a (possibly split) indefinite quaternion
  algebra over $\QQ$.
\end{enumerate}
Let $\ell$ be a rational prime that remains inert in $F$ 
and let $\lambda$ be the prime of $F$ over $\ell$, so that $F_\lambda$
is a quadratic extension of $\QQ_\ell$. Let $\gamma$ be the nontrivial
automorphism of $F_\lambda/\QQ_\ell$. The space $V_\ell(A)$ has an
$F_\lambda$-module structure; denote by $W_\lambda$ the vector space
$V_\ell(A)$ with this structure. By
Theorem~\ref{theorem:pairing-existence}, $W_\lambda$ is irreducible as
an $F_\lambda[\Gal_K]$-module and has $F_\lambda$-dimension 4. Denote
by $\rho_{A,\lambda}$ the corresponding $\lambda$-adic
representation.

The space $W_\lambda$ comes with a non-degenerate $F_\lambda$-hermitian
pairing $\Psi_\lambda:W_\lambda\times W_\lambda\to F_\lambda$, with
the property that for all $v,w\in W_\lambda$ and all
$\sigma\in \Gal_K$,
\[
  \Psi_\lambda(\rho_{A,\lambda}(\sigma)v,\rho_{A,\lambda}(\sigma)w) =
  \chi_\ell(\sigma)\cdot\Psi_\lambda(v,w).
\]
Hence the image of $\rho_{A,\lambda}$ is contained in
$\operatorname{GU}_4(\QQ_\ell)$. 

On the other hand, the same theorem can be applied to $A_L$, but now the
center of $\End^0(A_L)$ is $\QQ$. Hence there exists an absolutely
irreducible $\QQ_\ell[\Gal_L]$-submodule $\mathcal W_\ell$ of
$V_\ell(A_L)$, such that
$V_\ell(A_L)\simeq \mathcal W_\ell \times \mathcal W_\ell$, together
with a non-degenerate $\QQ_\ell$-bilinear alternating pairing
$\Phi_\ell: \mathcal W_\ell\times \mathcal W_\ell\to \QQ_\ell$. This
pairing satisfies that, for all $v,w\in \mathcal W_\lambda$ and each
$\tau\in \Gal_L$,
\[
  \Phi_\ell(\rho_{A,\lambda}(\tau)v,\rho_{A,\lambda}(\tau)w) =
  \chi_\ell(\tau)\cdot\Phi_\ell(v,w).
\]
Now there is an isomorphism of $F_\lambda[\Gal_L]$-modules
$W_\lambda\simeq \mathcal W_\ell \otimes_{\QQ_\ell} F_\lambda$. To see this, note that $W_\lambda$ (seen as a $\QQ_\ell[\Gal_L]$-module) is $V_\ell(A_L)$, and hence tensoring with $F_\lambda$ we obtain 
\[
  V_\ell(A_L)\otimes_{\QQ_\ell}F_\lambda \simeq W_\lambda \times
  {}^\gamma W_\lambda.
\]
Here ${}^\gamma W_\lambda$ is $W_\lambda$ with a $\gamma$-linear
action of $F_\lambda$. On the other hand we have
$V_\ell(A_L)\otimes_{\QQ_\ell} F_\lambda \simeq (\mathcal W_\ell
\times \mathcal W_\ell)\otimes_{\QQ_\ell} F_\lambda$, and since all
the modules are absolutely irreducible, we obtain the isomorphism
$W_\lambda\simeq \mathcal W_\ell\otimes_{\QQ_\ell} F_\lambda$.
Hence $W_\lambda$ inherits (by extension of scalars to $F_\lambda$)
the alternating pairing $\Phi_\ell$. By
Proposition~\ref{proposition:pairing-extension} the pairing
$\Phi_\ell$ is also $\Gal_K$-equivariant up to a similitude
character. More explicitly, there exists a finite order character
$\theta:\Gal_K\to \bar\QQ^\times$ which factors through $\Gal(L/K)$,
and such that for all $v,w\in W_\lambda$ and all $\sigma\in \Gal_K$,
we have
\[
  \Phi_\ell(\rho_{A,\lambda}(\sigma)v,\rho_{A,\lambda}(\sigma)w) =
  \theta(\sigma)\chi_\ell(\sigma)\cdot\Phi_\ell(v,w).
\]
This shows that the image of $\rho_{A,\lambda}$ is contained in
$\GSp_4(F_\lambda)$ and has similitude character $\theta\chi_\ell$. We
have thus seen that
\( \rho_{A,\lambda}(\Gal_L) \subseteq
\operatorname{GU}_4(\QQ_\ell)\cap \GSp_4(F_\lambda)\), the
intersection taking place in $\GL_4(F_\lambda)$. Particular examples can be found in the forthcoming preprint \cite{FFG25}.

Observe that $A_L$ is a strong abelian $K$-variety, by
Corollary~\ref{corollary:strong-K-varieties} and the fact that
$Z(\End^0(A_L)) = \QQ\subset F = \End^0(A)$. Therefore the
representation $\rho_{A,\lambda}|_{\Gal_L}$ extends (after a twist) to
a representation of $\Gal_K$ by Theorem~\ref{theorem:extension}, but
the twist is in fact trivial since $\rho_{A,\lambda}$ is already a
representation of $\Gal_K$.

If $\End^0(A_L)$ is a \emph{definite} quaternion algebra (instead of
an indefinite one), the same argument proves that
\( \rho_{A,\lambda}(\Gal_L) \subseteq
\GU_4(\QQ_\ell)\cap \GO_4(F_\lambda) \). An example of
this case is given in \cite{cantoralfarfn2023monodromy}, where $A$ is
the Jacobian of the genus 4 curve
\[
	C: y^2=x(x^4+1)(x^4+x^2+1).
\]

\subsection{Abelian surfaces with potential QM are modular}

Let $A/\QQ$ be an abelian surface such that $\End^0(A_{\overline{\QQ}})$ is
an indefinite (possibly split) quaternion algebra. If $\End^0(A)$ is a
quadratic field, the surface $A$ is modular by Ribet's results
(\cite{MR2058653}), together with Serre's modularity conjecture
(\cite{MR2551763,MR2551764}. We assume that $\End^0(A)=\QQ$, and we claim that $A$ is also modular in this case.
\begin{lemma}\label{lemma:pQM-surface-is-Q-variety}
$A_L$ is a strong $\QQ$-variety.
\end{lemma}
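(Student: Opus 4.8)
The plan is to deduce this directly from Corollary~\ref{corollary:strong-K-varieties}. First I would fix $L$ to be the field of definition of all the endomorphisms of $A$, i.e. the smallest number field with $\End(A_L)=\End(A_{\overline{\QQ}})$. A standard argument shows $L/\QQ$ is Galois: the $\Gal_{\QQ}$-action on the finitely generated group $\End(A_{\overline{\QQ}})$ has open kernel, and that kernel is normal because ${}^{\tau\sigma\tau^{-1}}\varphi={}^{\tau}\big({}^{\sigma}({}^{\tau^{-1}}\varphi)\big)$ and $\sigma$ fixes every element of $\End(A_{\overline{\QQ}})$, in particular ${}^{\tau^{-1}}\varphi$. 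Thus the hypotheses of Section~\ref{subsection:general-k-varieties} are in force. Since $A$ is defined over $\QQ$ we have ${}^{\sigma}A=A$ for every $\sigma\in\Gal(L/\QQ)$, so $A_L$ is trivially a $\QQ$-variety (take $\mu_\sigma=\mathrm{id}_A$).

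Next I would compute the center. By hypothesis $D:=\End^0(A_L)=\End^0(A_{\overline{\QQ}})$ is an indefinite (possibly split) quaternion algebra over $\QQ$, hence its center is $E=\QQ$. For each $\sigma\in\Gal(L/\QQ)$ the map $\varphi\mapsto{}^{\sigma}\varphi$ is a $\QQ$-algebra automorphism of $D$ (it is additive, multiplicative and fixes $\QQ$), so it restricts to the identity on $E=\QQ$; equivalently, the action of $\Gal(L/\QQ)$ on $E$ defined in \eqref{eq:action} is trivial, as it always is on rational elements. Therefore $E_0=E=\QQ$, and by Proposition~\ref{prop:Galois-theory} the subgroup $G_0=\ker(\psi)$ is all of $\Gal(L/\QQ)$, i.e. $L'=L^{G_0}=\QQ$. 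Theorem~\ref{thm:strong-field} (equivalently Corollary~\ref{corollary:strong-K-varieties}, whose hypothesis ``relation \eqref{eq:strong-k-var} holds for all $\varphi\in E$'' is vacuous here since $E=\QQ$) then says exactly that $A_L$ is a strong $\QQ$-variety.

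For the record, the good system of isogenies is the one produced in the proof of Theorem~\ref{thm:strong-field} via Skolem--Noether: for each $\sigma\in\Gal(L/\QQ)$ the automorphism $\varphi\mapsto{}^{\sigma}\varphi$ of the central simple $\QQ$-algebra $D$ is inner, say ${}^{\sigma}\varphi=\alpha_\sigma\varphi\alpha_\sigma^{-1}$ with $\alpha_\sigma\in D^\times$, and then $\mu_\sigma:=m_\sigma\alpha_\sigma^{-1}$ (with $m_\sigma\in\ZZ_{>0}$ chosen so that this lies in $\End(A_L)$) is an isogeny $A_L\to A_L={}^{\sigma}A$ satisfying $\mu_\sigma{}^{\sigma}\varphi=\varphi\mu_\sigma$ for all $\varphi\in\End(A_L)$. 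I do not expect any real obstacle: the content is entirely in the fact that the center $E$ already equals $\QQ$, so that the action \eqref{eq:action} is automatically trivial; the only point requiring (routine) care is checking that $L$ may be taken Galois over $\QQ$.
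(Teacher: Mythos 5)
Your proof is correct and follows essentially the same route as the paper: since $Z(\End^0(A_L))=\QQ$, relation \eqref{eq:strong-k-var} holds trivially for all central $\varphi$, and Corollary~\ref{corollary:strong-K-varieties} (via Theorem~\ref{thm:strong-field} and Skolem--Noether) yields the strong $\QQ$-variety structure. The extra verifications you include (that $L/\QQ$ may be taken Galois, and the explicit system of isogenies) are fine but not a different argument.
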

\begin{proof}
  The relation ${}^\sigma\varphi = \varphi$ holds trivially for all
  $\sigma\in\Gal(L/\QQ)$ and all $\varphi\in Z(\End^0(A_L))=\QQ$. Then
  Corollary~\ref{corollary:strong-K-varieties} implies there are
  isogenies $\mu_\sigma:A_L\to A_L$ such that $A_L$ is a strong
  $\QQ$-variety.
\end{proof}

By \cite[Proposition~2.1]{MR2091964}, there exists a smallest Galois
extension $L/\QQ$ such that $D:=\End^0(A_L)=\End^0(A_{\bar\QQ})$, and
such that $\Gal(L/\QQ)$ is either $C_n$ or $D_n$ with $n=2,3,4$ or
$6$.

\begin{lemma}\label{lemma:quadratic-field}
  For each field $K\subseteq L$ such that $L/K$ is cyclic,
  $\End^0(A_K)$ contains a quadratic field. In particular, the
  assumption $\End^0(A)=\QQ$ implies that $L/\QQ$ is a dihedral
  extension, and there exists a quadratic extension $K/\QQ$ such that
  $\End^0(A_K)$ is a quadratic field.
\end{lemma}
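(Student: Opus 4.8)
The plan is to exploit the general machinery of Section~\ref{subsection:general-k-varieties} applied with base field $K$ in place of $\QQ$. Since $A_L$ is a strong $\QQ$-variety (Lemma~\ref{lemma:pQM-surface-is-Q-variety}), it is in particular a $K$-variety for every intermediate field $K$, and since $L/K$ is assumed Galois we may run the action construction~\eqref{eq:action} of $\Gal(L/K)$ on the center $\widetilde{E} = Z(\End^0(A_L))$. Here $\End^0(A_L) = D$ is an indefinite quaternion algebra over $\QQ$, so $\widetilde{E} = \QQ$ and the action of $\Gal(L/K)$ on it is forced to be trivial. By Proposition~\ref{prop:Galois-theory} this means $\widetilde{E}_0 = \widetilde{E} = \QQ$ and the kernel $G_0$ of the action is all of $\Gal(L/K)$, i.e. $A_K$ is already a strong $K$-variety (this also follows directly from Corollary~\ref{corollary:strong-K-varieties}). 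Now I would invoke the dimension count: the representation $\widetilde{\sigma_{A,\lambda}}$ of $\Gal_\QQ$ attached to $A/L$ has dimension $\frac{2g}{\tilde d [\widetilde{E}_0:\QQ]} = \frac{4}{2\cdot 1} = 2$, while if $\End^0(A_K) = \QQ$ for some $K$ with $L/K$ cyclic, running the same construction with $A_K$ in place of $A_L$ would produce a $2$-dimensional representation of $\Gal_K$ extending to $\Gal_\QQ$, which by Remark~\ref{remark:strong-unicity} in the cyclic case needs no twist — but this contradicts the fact that over $L$ the variety already has quaternionic multiplication forcing its $\Gal_L$-representation to be irreducible of dimension $2$ and its endomorphism algebra to be nonabelian, so the $2$-dimensional representation over $K$ cannot carry a compatible quaternionic action unless $\End^0(A_K)$ is strictly larger than $\QQ$.

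More carefully, the cleanest route is the following. Let $K \subseteq L$ with $L/K$ cyclic. Then $\Gal(L/K)$ is cyclic, and $A_K$ being a strong $K$-variety (as above), Theorem~\ref{theorem:extension} together with the cyclic case of Remark~\ref{remark:strong-unicity} gives an extension $\widetilde{\rho_\lambda}$ of $\rho_\lambda$ to $\Gal_K$ \emph{without twisting}. The key point is then to compute $\End_{\Gal_K}$ of this $2$-dimensional representation: since $\rho_\lambda$ already has quaternionic endomorphisms $D \otimes_\QQ \QQ_\ell \simeq M_2(\QQ_\ell)$ acting $\Gal_L$-equivariantly (and the isogenies $\mu_\sigma$ commute with $D$ by the strong hypothesis), these endomorphisms are $\Gal_K$-equivariant as well, so $D \hookrightarrow \End_{\Gal_K}(\widetilde{W_\lambda} \otimes \QQ_\ell)$. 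But a faithful action of the quaternion algebra $D$ on a representation forces, via Faltings' semisimplicity and the identification of the endomorphism ring (as in \cite{MR0718935}), that $D \subseteq \End^0(A_K) \otimes_\QQ \QQ_\ell$ — more precisely, the full endomorphism algebra of $A_K$ contains $D$ because the Tate conjecture (Faltings) identifies $\End_{\Gal_K}(\Vl A_K)$ with $\End^0(A_K) \otimes \QQ_\ell$, and $D$ is already visible there. Hence $\End^0(A_K) \supseteq D \supsetneq \QQ$, and since $\End^0(A_K) \subseteq \End^0(A_L) = D$ we get $\End^0(A_K) = D$ or a subfield; in any case it contains the quadratic subfields of $D$, so it contains a quadratic field (a quaternion algebra over $\QQ$ always contains quadratic subfields, e.g. splitting it at a suitable place).

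For the second assertion: if $\End^0(A) = \QQ$, then $L \neq \QQ$ (otherwise $A$ would have QM over $\QQ$), and $L/\QQ$ cannot be cyclic, for if it were, applying the first part with $K = \QQ$ would force $\End^0(A)$ to contain a quadratic field, a contradiction. Since $\Gal(L/\QQ)$ is $C_n$ or $D_n$ with $n \in \{2,3,4,6\}$ by \cite[Proposition~2.1]{MR2091964}, and it is not cyclic, it must be dihedral $D_n$ with $n \geq 2$; in particular $D_n$ has a subgroup $H$ of index $2$ that is cyclic (namely the rotation subgroup $C_n$, or for $D_2$ any of the three $C_2$'s), and the fixed field $K := L^H$ is then a quadratic extension of $\QQ$ with $L/K$ cyclic. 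Applying the first part to this $K$ shows $\End^0(A_K)$ contains a quadratic field; since $[L:K] = n$ and the Galois-type constraints limit $\End^0(A_K)$, one checks it is exactly a quadratic field (it cannot be all of $D$, as that would require $A_K$ to have full QM, hence by minimality of $L$ we would have $K \supseteq L$, impossible for $n \geq 2$).

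The main obstacle I anticipate is making precise the step "$D$ acts $\Gal_K$-equivariantly, hence $\End^0(A_K) \supseteq D$": one must be careful that the extension $\widetilde{\rho_\lambda}$ was constructed abstractly (via lifting a projective representation) and a priori its commutant is only controlled over $\Gal_L$, so one needs either to track the $D$-action through the construction of the $\mu_\sigma$ (using that they commute with $D$ by the strong $K$-variety property) to see that the extended representation genuinely realizes a sub-Tate-module of $A_K$, or to argue directly on $\Vl A$ that the quaternionic endomorphisms, being defined over $L$ and compatible with the descent data given by the $\mu_\sigma$, descend to endomorphisms over $K$. Faltings' theorem then does the rest.
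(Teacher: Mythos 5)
The key step of your ``cleanest route'' fails. You argue that, because the isogenies $\mu_\sigma$ of the strong structure satisfy $\mu_\sigma\,{}^\sigma\varphi=\varphi\,\mu_\sigma$, the quaternionic endomorphisms become $\Gal_K$-equivariant, so that Faltings' theorem yields $\End^0(A_K)\supseteq D$. But the $\mu_\sigma$ are auxiliary isogenies, not the Galois action: for $\tau\in\Gal_K$ restricting to a nontrivial element of $\Gal(L/K)$, conjugation by $\rho_{A,\ell}(\tau)$ carries the action of $\varphi\in D$ on $\Vl A$ to that of ${}^\tau\varphi$, which differs from $\varphi$ unless $\varphi$ is already defined over $K$; hence only $\End^0(A_K)\otimes\QQ_\ell$, and not $D\otimes\QQ_\ell$, lies in $\End_{\Gal_K}(\Vl A)$. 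Indeed the conclusion $\End^0(A_K)\supseteq D$ contradicts the minimality of $L$ whenever $K\subsetneq L$, and since nothing in that step actually uses cyclicity of $L/K$ (the absence of a twist in Theorem~\ref{theorem:extension} plays no role in the commutant computation), the same argument applied to $K=\QQ$ would contradict the standing hypothesis $\End^0(A)=\QQ$. The intermediate claim $D\hookrightarrow\End_{\Gal_K}(\widetilde{W_\lambda}\otimes\QQ_\ell)$ is also impossible: $D\otimes\QQ_\ell\simeq M_2(\QQ_\ell)$ acts on $\Vl A\simeq W_\lambda^{\oplus 2}$, not on the $2$-dimensional piece $W_\lambda$, and since $\widetilde{W_\lambda}$ is already absolutely irreducible as a $\Gal_L$-module its $\Gal_K$-commutant consists of scalars.

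What is missing is a direct computation of $\End^0(A_K)$ as a fixed algebra, which is how the paper proceeds. Since all endomorphisms are defined over $L$, one has $\End^0(A_K)=D^{\Gal(L/K)}$. Writing $\Gal(L/K)=\langle\sigma\rangle$ and applying Skolem--Noether to the $\QQ$-algebra automorphism $\varphi\mapsto{}^\sigma\varphi$ of the quaternion algebra $D$ gives $\alpha_\sigma\in D^\times$ with ${}^\sigma\varphi=\alpha_\sigma\varphi\alpha_\sigma^{-1}$, so $\End^0(A_K)$ is the centralizer in $D$ of $\QQ(\alpha_\sigma)$: either $\alpha_\sigma\in\QQ^\times$, in which case the centralizer is all of $D$ and minimality of $L$ forces $K=L$, or $\QQ(\alpha_\sigma)$ is a quadratic subfield and $\End^0(A_K)=\QQ(\alpha_\sigma)$ exactly. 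In either case $\End^0(A_K)$ contains a quadratic field, which is the content of the first assertion. Your treatment of the ``in particular'' part (ruling out cyclic $L/\QQ$ by applying the first part to $K=\QQ$, then taking the fixed field of a cyclic index-two subgroup of $D_n$ and noting that $\End^0(A_K)$ cannot be all of $D$ by minimality) is essentially the intended argument and is fine once the first part is repaired along these lines.
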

\begin{proof}
  By the Skolem-Noether theorem, for each $\sigma\in\Gal(L/\QQ)$ there
  exists some $\alpha_\sigma\in D^\times$ such that
  \( {}^\sigma\varphi = \alpha_\sigma \varphi \alpha_\sigma^{-1} \) for each
  $\varphi\in D$. In particular, for each $\sigma\in\Gal(L/\QQ)$ the
  subalgebra $\End^0(A_{L^{\langle\sigma\rangle}})$ equals the
  centralizer in $D$ of the field $\QQ(\alpha_\sigma)$.

  With this in mind, we let $\sigma\in\Gal(L/\QQ)$ be any element and
  let $K:=L^{\langle\sigma\rangle}$. If $\alpha_\sigma\in\QQ^\times$, then
  the centralizer of $\QQ$ in $D$ is $D$, and in fact
  $K=L$. Otherwise, $\alpha_\sigma$ generates a (maximal) quadratic
  subfield of $D$, and the centralizer of $\QQ(\alpha_\sigma)$ is
  $\QQ(\alpha_\sigma)$ itself. This proves the claim.
\end{proof}

\begin{theorem}\label{theorem:potentialQM-implies-paramodular}
  The abelian surface $A$ is Siegel modular, i.e. there exists a Siegel
  newform of $\GSp_4(\A_\QQ)$ whose $L$-series matches
  that of $A$.
\end{theorem}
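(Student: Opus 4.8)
The plan is to realise $L(A,s)$ as the spin $L$-function of a holomorphic Siegel newform by descending to a two-dimensional representation of $\Gal_\QQ$, applying Serre's conjecture, and then inducing and transferring to $\GSp_4$. Write $V:=\Vl A\otimes_{\QQ_\ell}\overline{\QQ_\ell}$. By Lemma~\ref{lemma:pQM-surface-is-Q-variety}, $A_L$ is a strong $\QQ$-variety, and by \cite{MR2091964} together with Lemma~\ref{lemma:quadratic-field} the group $\Gal(L/\QQ)$ is dihedral and there is a quadratic field $K/\QQ$ with $\Gal(L/K)$ cyclic such that $\End^0(A_K)$ is a quadratic field. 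Since $\End^0(A)=\QQ$, the semisimple module $\Vl A$ has $\End_{\Gal_\QQ}(\Vl A)=\End^0(A)\otimes_\QQ\QQ_\ell=\QQ_\ell$, so $V$ is absolutely irreducible of dimension $4$; and since $\End_{\Gal_K}(\Vl A)=\End^0(A_K)\otimes_\QQ\QQ_\ell$ is an étale $\QQ_\ell$-algebra of rank $2$, the restriction $V|_{\Gal_K}$ splits as $\rho\oplus\rho'$ with $\rho\not\cong\rho'$ absolutely irreducible of dimension $2$. Irreducibility of $V$ forces $\Gal(K/\QQ)$ to interchange $\rho$ and $\rho'$, so $V\cong\Ind_{\Gal_K}^{\Gal_\QQ}\rho$ (hence $\rho\not\cong{}^{\sigma}\rho$ for $\sigma$ the nontrivial element of $\Gal(K/\QQ)$), and therefore $L(A,s)=L(s,\rho)$.

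Next I would build a two-dimensional representation of $\Gal_\QQ$. Choose $\ell$ split in $\End^0(A_L)$, so the representation $\rho_\lambda$ of $\Gal_L$ from Theorem~\ref{theorem:pairing-existence} has dimension $2$. Because $Z(\End^0(A_L))=\QQ$ is fixed by $\Gal(L/\QQ)$, the field $L'$ of Theorem~\ref{thm:strong-field} equals $\QQ$, and Theorem~\ref{theorem:extension} yields an absolutely irreducible extension $\widetilde{\rho_\lambda}\colon\Gal_\QQ\to\GL_2(\overline{\QQ_\ell})$ of a finite-order twist $\rho_\lambda\otimes\psi$. This $\widetilde{\rho_\lambda}$ is de Rham with Hodge--Tate weights $\{0,1\}$, is odd (complex conjugation acts on $\Vl A$, hence on $\rho_\lambda$, with trace $0$), and is not dihedral (otherwise $A_L$, hence $A_{\overline\QQ}$, would acquire more endomorphisms than a quaternion algebra over $\QQ$). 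The last point makes $\psi$ invariant under $\Gal_\QQ$-conjugation, so $\psi$ extends to a character of $\Gal_K$ (the obstruction lies in $H^2(\Gal(L/K),\overline{\QQ_\ell}^\times)=0$); comparing restrictions to $\Gal_L$ and using that $L/K$ is cyclic then gives $\rho\cong\widetilde{\rho_\lambda}|_{\Gal_K}\otimes\mu$ for a finite-order Hecke character $\mu$ of $K$, and in particular $\widetilde{\rho_\lambda}|_{\Gal_K}$ is irreducible.

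I would then invoke modularity over $\QQ$. As an odd, absolutely irreducible, de Rham, two-dimensional representation of $\Gal_\QQ$, $\widetilde{\rho_\lambda}$ is modular by Serre's conjecture \cite{MR2551763,MR2551764} together with modularity lifting, exactly as in Ribet's treatment of $\QQ$-curves \cite{MR2058653}; write $\widetilde{\rho_\lambda}\cong\rho_{g,\lambda'}$ for a weight-$2$ newform $g$, with associated cuspidal automorphic representation $\pi_g$ of $\GL_2(\A_\QQ)$. The base change $\mathrm{BC}_{K/\QQ}(\pi_g)$ is cuspidal (its Galois representation is the irreducible $\widetilde{\rho_\lambda}|_{\Gal_K}$), so $\Pi:=\mathrm{BC}_{K/\QQ}(\pi_g)\otimes\mu$ is cuspidal on $\GL_2(\A_K)$ with $L(s,\Pi)=L(s,\rho)=L(A,s)$. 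Since $\rho\not\cong{}^\sigma\rho$, the representation $\Pi$ is not $\Gal(K/\QQ)$-invariant, and automorphic induction for the cyclic extension $K/\QQ$ (Arthur--Clozel) produces a cuspidal automorphic representation $\Pi':=\mathrm{AI}_{K/\QQ}(\Pi)$ of $\GL_4(\A_\QQ)$ with $L(s,\Pi')=L(A,s)$.

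Finally I would descend to $\GSp_4$. A polarization of $A$ defined over $\QQ$ — obtained by averaging any polarization over $\Gal(L/\QQ)$ — equips $\Vl A$ with a perfect alternating $\Gal_\QQ$-equivariant pairing of similitude $\chi_\ell$ (as in Section~\ref{section:endomorphisms}), so $V$, and hence $\Pi'$, is of symplectic type. By Arthur's classification of the discrete automorphic spectrum of $\GSp_4$ — equivalently, by the twisted endoscopic transfer from $\GSp_4$ to $\GL_4$ — the symplectic-type cuspidal representation $\Pi'$ is the functorial lift of a cuspidal automorphic representation $\pi$ of $\GSp_4(\A_\QQ)$; and the Hodge--Tate weights $\{0,0,1,1\}$ of $\Vl A$ place $\pi_\infty$ in the (limit of) holomorphic discrete series $L$-packet of scalar weight $2$. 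Thus $\pi$ is generated by a holomorphic Siegel newform of weight $2$ whose spin $L$-series equals $L(s,\Pi')=L(A,s)$, which is the assertion. The main obstacle is precisely this last step: extracting the $\GSp_4$-representation from $\Pi'$ and identifying its archimedean component as a holomorphic (limit of) discrete series of weight $2$ relies on Arthur's classification for $\GSp_4$ and the associated local packet analysis; a subsidiary point is verifying the oddness and residual absolute irreducibility hypotheses needed to apply Serre's conjecture to $\widetilde{\rho_\lambda}$, which one handles exactly as for $\QQ$-curves.
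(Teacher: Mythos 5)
Your proposal is correct and follows essentially the same route as the paper's proof: extend the $2$-dimensional representation of $\Gal_L$ to $\Gal_\QQ$ via the strong $\QQ$-variety structure (Theorem~\ref{theorem:extension}), apply Serre's conjecture plus modularity lifting, identify the $2$-dimensional constituent over the quadratic field $K$ of Lemma~\ref{lemma:quadratic-field} with a twist of $\widetilde{\rho_\lambda}|_{\Gal_K}$, use base change and automorphic induction for the quadratic extension $K/\QQ$ to obtain a cuspidal representation of $\GL_4(\A_\QQ)$ matching $\rho_{A,\ell}$, and descend to $\GSp_4$ using the symplectic type coming from the Weil pairing. The differences are cosmetic: the paper gets the twist comparison from equality of projective representations rather than by extending $\psi$, and it cites \cite[Theorem~2.9.3]{MR4349242} for the $\GSp_4$ descent where you invoke Arthur's classification directly.
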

\begin{proof}
  Since all endormorphisms of $A$ are defined over $L$, there exists
  an irreducible Galois representation
  $\rho_{\lambda,L}:\Gal_L \to \GL_2(\overline{\QQ_\ell})$ attached to
  $A/L$. By Lemma~\ref{lemma:pQM-surface-is-Q-variety} $A/L$ is a
  strong $\QQ$-variety, so Theorem~\ref{theorem:extension} implies
  that there exists a character $\theta$ of $\Gal_L$ and a
  representation
  $\widetilde{\rho_\lambda}:\Gal_\QQ\to\GL_2(\overline{\QQ_\ell})$
  (with Hodge-Tate weights $\{0,1\}$) such that
  $\widetilde{\rho_\lambda}|_{\Gal_L}\simeq
  (\rho_{\lambda,L})\otimes\theta$.

  Let $\ell$ be a prime number such that the residual representation
  $\overline{\widetilde{\rho_\lambda}}$ is absolutely irreducible
  (this is always the case if $\ell$ is large enough). Serre's
  modularity conjecture (\cite{MR2551763,MR2551764}) implies that the
  residual representation $\overline{\widetilde{\rho_\lambda}}$ is
  modular so (by \cite{MR2600871}) $\widetilde{\rho_\lambda}$ itself
  is modular.  By solvable base change for $\GL_2$ (as proved in
  \cite{MR0574808}), the representation $\widetilde{\rho_\lambda}$
  restricted to $\Gal_L$ is modular, and then the same is true for
  $\rho_\lambda$ (as twisting preserves modularity).
  
  Fix a quadratic extension $K/\QQ$ as given in
  Lemma~\ref{lemma:quadratic-field} and let $E= \End^0(A_K)$ be the
  corresponding quadratic field. Let $\ell$ be a rational prime
  splitting in $E$. Consider the $\ell$-adic representation of $A$,
  $\rho_\ell:\Gal_\QQ\to \GSp_4(\QQ_\ell)$. Fix $\lambda$ a prime of
  $E$ over $\ell$, then there is a subrepresentation (abusing
  notation) $\rho_\lambda:\Gal_K\to\GL_2(E_\lambda)$ of
  $\rho_\ell|_{\Gal_K}$, and
  $\rho_\ell = \Ind^{\Gal_K}_{\Gal_\QQ}\rho_\lambda$. The
  representation $\rho_\lambda$ must be a twist of
  $\widetilde{\rho_\lambda}$ restricted to $\Gal_K$ (as they have the
  same projective representation), so $\rho_\lambda$ is also modular.
  Automorphic induction (as proven in \cite[Th\'eor\`eme
  3]{MR2903769}) then implies that
  $\Ind^{\Gal_K}_{\Gal_\QQ}\rho'_\lambda \simeq \rho_\ell$ is modular
  itself.

We are led to prove that the automorphic representation $\Pi$ of
$\GL_4(\A_\QQ)$ is actually a transfer from $\GSp_4(\A_\QQ)$. The fact
that $A$ preserves the Weil pairing implies that $\Pi$ has symplectic
type (as in \cite{MR4349242} \S 2.9) and the result then follows from
\cite[Theorem 2.9.3]{MR4349242}.
\end{proof}
  In general we do not know how to prove that the automorphic form of
  the last theorem has a paramodular fixed vector. Following the
  previous notation, if $\rho_\lambda$ has trivial Nebentypus
  (i.e. its similitude character equals the cyclotomic one), then we can show the paramodularity of its induction to $\Gal_\QQ$. We do that as follows.

\begin{proposition}\label{proposition:real-quadratic-field}
  Suppose $\Gal(L/\QQ)\simeq D_4 \simeq C_2\times C_2$. Then, there is
  at most one quadratic subfield $K$ of $L$ such that $E=\End^0(A_K)$
  is an imaginary quadratic field. If such $K$ exists, the determinant
  of the representation $\rho_\lambda:\Gal_K\to \GL_2(E_\lambda)$
  equals $\eta_{L/K}\cdot\chi_\ell$, with $\eta_{L/K}$ the nontrivial
  character of $\Gal(L/K)$. For any other quadratic
  subfield $K'\subset L$, $\End^0(A_{K'})$ is real quadratic, and the
  determinant of the corresponding representation is $\chi_\ell$.
\end{proposition}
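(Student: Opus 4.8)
Since $\Gal(L/\QQ)\simeq C_2\times C_2$ has exactly three subgroups of order two, $L$ has exactly three quadratic subfields $K_1,K_2,K_3$, the fixed fields of the three nontrivial elements $\sigma_1,\sigma_2,\sigma_3$, which we label so that $\sigma_3=\sigma_1\sigma_2$. As in the proof of Lemma~\ref{lemma:quadratic-field}, write ${}^{\sigma_i}\varphi=\alpha_i\varphi\alpha_i^{-1}$ on $D$ with $\alpha_i\in D^\times$ (Skolem--Noether); then $\End^0(A_{K_i})$ is the centralizer of $\QQ(\alpha_i)$ in $D$, and minimality of $L$ forces $\alpha_i\notin\QQ$, so $\QQ(\alpha_i)$ is a maximal subfield of $D$ and $\End^0(A_{K_i})=\QQ(\alpha_i)$ is a quadratic field. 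From $\sigma_i^2=1$ we get $\alpha_i^2\in\QQ^\times$, so we may take $\alpha_i$ of reduced trace $0$, and then $\QQ(\alpha_i)$ is imaginary (resp. real) quadratic precisely when $\alpha_i^2<0$ (resp. $\alpha_i^2>0$).

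The first assertion is then a computation inside the quaternion algebra. Comparing inner automorphisms in $\sigma_3=\sigma_1\sigma_2$ gives $\alpha_1\alpha_2=c\,\alpha_3$ for some $c\in\QQ^\times$; since $\alpha_3$ has reduced trace $0$ this forces $\alpha_1$ and $\alpha_2$ to anticommute, so $D\simeq\bigl(\tfrac{\alpha_1^2,\,\alpha_2^2}{\QQ}\bigr)$ and $\alpha_3^2=-c^{-2}\alpha_1^2\alpha_2^2$ (and the same holds for every pair among $\alpha_1,\alpha_2,\alpha_3$). Hence $\alpha_1^2\alpha_2^2\alpha_3^2<0$, so an odd number of the $\alpha_i^2$ is negative; but if two of them, say $\alpha_i^2$ and $\alpha_j^2$, were negative, then $D\simeq\bigl(\tfrac{\alpha_i^2,\alpha_j^2}{\QQ}\bigr)$ would be ramified at $\infty$, contradicting that $D$ is indefinite. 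Therefore exactly one of $\alpha_1^2,\alpha_2^2,\alpha_3^2$ is negative, i.e. exactly one $K_i$ has imaginary quadratic endomorphism algebra and the other two are real quadratic.

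For the determinants, observe that in each case $\rho_\lambda$ is a $2$-dimensional representation over $E_\lambda=\QQ_\ell$ (since $\ell$ splits in $E$), so its determinant equals the similitude character of any Galois-equivariant nondegenerate alternating form that it preserves. If $E=\End^0(A_{K'})$ is real quadratic, then $A_{K'}$ is simple of Albert type I, so Theorem~\ref{theorem:pairing-existence} already provides a $\Gal_{K'}$-equivariant symplectic form on $\rho_\lambda$ with similitude character $\chi_\ell$; hence $\det\rho_\lambda=\chi_\ell$.

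The imaginary case is the crux. Let $K$ be the unique quadratic subfield with $E=\End^0(A_K)$ imaginary quadratic; here Theorem~\ref{theorem:pairing-existence} does not apply ($\ell$ is split, not inert, in the CM field $E$), so I would argue by hand with the Weil pairing $\langle\cdot,\cdot\rangle$ on $\Vl A$. The Rosati involution is a positive involution of the CM field $E$, hence is complex conjugation on $E$; consequently it interchanges the two idempotents $e_\lambda,e_{\bar\lambda}$ of $E\otimes_\QQ\QQ_\ell$ ($\bar\lambda$ being the other prime over $\ell$), which makes $\mathrm{V}_\lambda$ and $\mathrm{V}_{\bar\lambda}$ isotropic; thus the Weil pairing restricts to a perfect $\Gal_K$-equivariant pairing $\mathrm{V}_\lambda\times\mathrm{V}_{\bar\lambda}\to\QQ_\ell$ with similitude character $\chi_\ell$. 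Now choose $j\in D$ with $D=E\oplus Ej$, $j$ of reduced trace $0$, $j\varphi j^{-1}=\overline{\varphi}$ for $\varphi\in E$ and $j^2\in\QQ^\times$; using the shape $x\mapsto u\overline{x}u^{-1}$ of the Rosati involution on the indefinite algebra $D$ (with $\QQ(u)=E$ here, since the Rosati involution restricts to conjugation on $E$) one checks that $j'=j$. Then $j$ gives an isomorphism $\mathrm{V}_\lambda\xrightarrow{\sim}\mathrm{V}_{\bar\lambda}$, so $H(u,v):=\langle u,jv\rangle$ is a nondegenerate form on $\mathrm{V}_\lambda$, alternating because $j'=j$ and $\langle\cdot,\cdot\rangle$ is skew-symmetric. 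The essential point is that $j$ is defined over $L$ but not over $K$: for $\sigma\in\Gal_K$ restricting to the generator $\sigma_i$ of $\Gal(L/K)$ one computes ${}^\sigma j=\alpha_i j\alpha_i^{-1}=-j$ (as $\alpha_i\in E$ anticommutes with $j$), so ${}^\sigma j=\eta_{L/K}(\sigma)\,j$ for all $\sigma\in\Gal_K$. Carrying this through the Galois-action computation for $H$ yields $H(\rho_\lambda(\sigma)u,\rho_\lambda(\sigma)v)=\eta_{L/K}(\sigma)\chi_\ell(\sigma)H(u,v)$, so $\rho_\lambda$ preserves a symplectic form with similitude character $\eta_{L/K}\chi_\ell$, whence $\det\rho_\lambda=\eta_{L/K}\chi_\ell$ as desired. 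I expect the two steps that need care to be the identity ${}^{\sigma_i}j=-j$ (which rests on $\Gal(L/\QQ)$ acting on $D$ by inner automorphisms, by elements of the subfield $E$) and the identity $j'=j$, extracted from the explicit form of the Rosati involution on an indefinite quaternion algebra containing its CM subfield.
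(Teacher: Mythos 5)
Your argument is correct, and on the determinant computation it takes a genuinely different route from the paper. For the ``at most one'' statement the paper uses the same ingredients as you (Skolem--Noether elements $\alpha_\sigma$, the fact that two imaginary fields $E_1\neq E_2$ would generate $D$, and indefiniteness), but you make the tacit step explicit: from $\alpha_1\alpha_2=c\,\alpha_3$ with all $\alpha_i$ pure you deduce anticommutation, hence $D\simeq\bigl(\tfrac{\alpha_i^2,\,\alpha_j^2}{\QQ}\bigr)$ for every pair, and the sign-parity argument then gives the sharper conclusion that \emph{exactly} one quadratic subfield has imaginary quadratic endomorphisms (the paper claims, and needs, only ``at most one''). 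For the determinants the paper simply quotes the formula $\det\rho_\lambda=\chi_\ell\cdot(\sigma\mapsto\alpha_\sigma/\overline{\alpha_\sigma})$ from \cite[Lemma~5.11]{MR2058652} and, in the imaginary case, gets nontriviality from Ribet's trace identity $a_{\mathfrak p}=\chi(\mathrm{Frob}_{\mathfrak p})\overline{a_{\mathfrak p}}$ with $E=\QQ(a_{\mathfrak p})$ \cite{MR2058653}; you instead exhibit the invariant symplectic forms directly: the type~I pairing of Theorem~\ref{theorem:pairing-existence} in the real case, and in the imaginary case the form $(u,v)\mapsto\langle u,jv\rangle$ on $\mathrm{V}_\lambda$, using isotropy of $\mathrm{V}_\lambda$ and $\mathrm{V}_{\bar\lambda}$, the identity $j'=j$, and ${}^{\sigma}j=\eta_{L/K}(\sigma)\,j$. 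The two delicate points you flag do check out: $\alpha_\sigma$ is automatically pure (since $\alpha_\sigma^2\in\QQ$ and $\alpha_\sigma\notin\QQ$ by minimality of $L$), which gives ${}^{\sigma}j=-j$; and since the Rosati involution is positive while $D$ is indefinite, it cannot be the canonical involution, so it is $x\mapsto u\bar xu^{-1}$ with $u$ pure, and $u\in E$ because its restriction to $E$ is complex conjugation, yielding $j'=j$ (with the canonical involution you would get $j'=-j$ and a symmetric form, so indefiniteness is genuinely used here). Two small things should be said explicitly: fix the polarization over $\QQ$, so that the Weil pairing is $\Gal_\QQ$-equivariant with similitude $\chi_\ell$ and the Rosati involution preserves $E=\End^0(A_K)$; and note that $A_{K'}$ is simple (its endomorphism algebra is a field) before invoking Theorem~\ref{theorem:pairing-existence}. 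What each approach buys: the paper's proof is shorter by citation, while yours is self-contained, proves existence of the imaginary subfield, and produces explicitly the $\eta_{L/K}\chi_\ell$-similitude alternating pairing that the remark following the proposition refers to.
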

\begin{proof}
  Let $K_1$ and $K_2$ be any two different quadratic subfields of
  $L$. We will show that either $E_1=\End^0(A_{K_1})$ or
  $E_2=\End^0(A_{K_2})$ is a real quadratic field.

  Suppose for a contradiction that $E_1$ and $E_2$ are both imaginary
  quadratic fields. We first show that $E_1\neq E_2$. For that, let
  $\sigma_i\in \Gal(L/\QQ)$ be the nontrivial automorphism fixing
  $K_i$ for $i=1,2$. Then
  $E_i=\End^0(A_{L^{\langle \sigma_i\rangle}})$ for $i=1,2$ by
  Lemma~\ref{lemma:quadratic-field}. Hence if $E_1= E_2$, we would
  have
  $E_1\subset \End^0(A_{L^{\langle\sigma_1,\sigma_2\rangle}})=\End^0(A)$. But
  this is a contradiction, since we are assuming that
  $\End^0(A)=\QQ$. Hence $E_1\neq E_2$.

  It follows that the quaternion algebra $D=\End^0(A_L)$ is generated
  (as an algebra over $\QQ$) by two quadratic elements $\sqrt{a_1}$,
  $\sqrt{a_2}$ with $a_1,a_2\in\QQ$ that generate $E_1$ and $E_2$,
  respectively. These elements do not commute, since $E_1\neq E_2$ are
  maximal fields and $D$ is noncommutative. Since $\End^0(A_L)$ is an
  indefinite quaternion algebra, either $a_1$ or $a_2$ must be
  positive: otherwise, $D\otimes_\QQ \mathbb R$ would be isomorphic to the
  Hamilton quaternions, which contradicts the fact that $D$ is indefinite.  Therefore either
  $E_1=\QQ(\sqrt{a_1})$ or $E_2=\QQ(\sqrt{a_2})$ is a real quadratic
  field.

  Let $K\subset L$ be any quadratic subfield and let $E=\End^0(A_K)$
  be the corresponding quadratic field of endomorphisms. Given
  $\sigma\in\Gal_K$, by Skolem-Noether we have that there exists some
  $\alpha_\sigma\in E^\times$ such that
  \( {}^\sigma \varphi = \alpha_\sigma\varphi \alpha_\sigma^{-1} \)
  for all $\varphi\in D$. Each $\alpha_\sigma$ is determined up to
  multiplication by elements in $\QQ^\times$, the center of $D$. It
  can be checked that the determinant of the representation
  $\rho_\lambda:\Gal_K\to\GL_2(E_\lambda)$ is the cyclotomic character
  times
\begin{align*}
	\chi:\Gal(L/K)&\to E^\times\\
	\sigma &\mapsto \alpha_\sigma/\overline{\alpha_\sigma}
\end{align*}
where $\bar\cdot$ denotes complex conjugation (cf. \cite{MR2058652},
Lemma 5.11).  In particular, when $E$ is a real field, then complex
conjugation restricts to the identity on $E$, and so $\chi$ is
trivial.

When $E$ is an imaginary field, we use
\cite[Proposition~3.5]{MR2058653} to find a trace of Frobenius
$a_{\mathfrak p}\neq 0$ with $E=\QQ(a_{\mathfrak p})$, and then by
\cite[Proposition~3.4]{MR2058653} we have
$a_{\mathfrak p} = \chi(\mathrm{Frob}_{\mathfrak p})
\overline{a_{\mathfrak p}}$. This shows
$\chi(\mathrm{Frob}_{\mathfrak p})=-1$, and so $\chi$ is nontrivial
when $E$ is imaginary quadratic.
\end{proof}

\begin{remark}
  Keeping the last proposition hypothesis, if there exists a quadratic
  field $K$ such that $\End^0(A_K)$ is imaginary quadratic, then the
  representation $\rho_{A,\ell}:\Gal_\QQ\to\GL_4(\QQ_\ell)$ preserves
  two alternating pairings. The first of them is obtained from $K$ by
  taking the skew-symmetric form on a $2$-dimensional vector space,
  whose similitude character matches
  $\det\rho_\lambda = \eta_{L/K}\cdot \chi_\ell$. Since $\Gal(L/\QQ)$
  is abelian, the character $\eta_{L/K}$ extends to the full Galois
  group, and Proposition~\ref{prop:extension-induction} produces a
  non-degenerate alternating pairing on the induced representation
  $\rho_{A,\ell}=\Ind_{\Gal_\QQ}^{\Gal_K}\rho_\lambda$.
	
  On the other hand, the representation $\rho_{A,\ell}$ preserves the
  Weil pairing, which has similitude character $\chi_\ell$.
  The statement is consistent with the fact that many traces of $\rho_{A,\ell}$
  are zero (otherwise, the two similitude characters would coincide).

Here is an example of this phenomenon from
  \cite{MR2091964} (page 620). Consider the genus $2$ curve
  \[
\C: y^2 = (x^2+7)(83/30x^4+14x^3-1519/30x^2+49x-1813/120),
\]
and let $A$ denote its Jacobian. Then the following holds:
\begin{itemize}
\item $\End_L(A)$ is a maximal order in $B_6$ (the indefinite quaternion
  algebra of discriminant $6$), for $L=\QQ(\sqrt{-6},\sqrt{-14})$.
  
\item $\End_\QQ(A)=\ZZ$.
\item $\End_{\QQ(\sqrt{-6})}(A)\otimes_\ZZ \QQ =
  \QQ(\sqrt{-6})$. 
\end{itemize}

\end{remark}

\begin{corollary}
  Suppose that $A$ is principally polarizable and that $\End(A_L)$ is
  an hereditary order in $\End^0(A_L)$. Then $A$ is paramodular.
\end{corollary}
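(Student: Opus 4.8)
The plan is to combine Theorem~\ref{theorem:potentialQM-implies-paramodular} with the theory of local newforms for $\GSp_4$. That theorem already produces a cuspidal automorphic representation $\Pi$ of $\GSp_4(\A_\QQ)$, of symplectic type, whose spin $L$-function matches that of $A$; since $\End^0(A)=\QQ$ the Galois representation $\rho_{A,\ell}$ is absolutely irreducible, so the underlying $\GL_4$-representation is cuspidal and $\Pi$ is the generic (in particular non-CAP) member of its packet. To conclude that $\Pi$ is paramodular of level equal to the conductor $N$ of $A$, I would show two things: first, that $\Pi$ has trivial central character; and second, that its (Roberts--Schmidt) paramodular conductor equals $N$.

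For the first point, the principal polarization of $A$ supplies, via its Weil pairing, a symplectic structure on $\rho_{A,\ell}$ with similitude character exactly $\chi_\ell$; the content is to ensure that the $\GSp_4$-transfer of Theorem~\ref{theorem:potentialQM-implies-paramodular} is taken with respect to \emph{this} pairing rather than the twisted one appearing in the Remark following Proposition~\ref{proposition:real-quadratic-field}. Here I would use Lemma~\ref{lemma:quadratic-field} together with Proposition~\ref{proposition:real-quadratic-field} and the classification of $\Gal(L/\QQ)$ as a dihedral group $D_n$ with $n\in\{2,3,4,6\}$: when $\Gal(L/\QQ)\simeq C_2\times C_2$ one picks, among the (at least two) quadratic subfields $K\subset L$ for which $\End^0(A_K)$ is real quadratic, one such $K$, so that $\det\rho_\lambda=\chi_\ell$ and the automorphic induction of $\rho_\lambda$ transfers to a representation of $\GSp_4(\A_\QQ)$ of trivial central character; the remaining cases (where only an imaginary quadratic $\End^0(A_K)$ occurs) are handled by the pairing analysis of Section~\ref{section:pairings}, using Proposition~\ref{prop:extension-induction} to single out the twist of similitude character $\chi_\ell$, and this is the step where principal polarizability is genuinely used.

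For the second point, Roberts--Schmidt's theory attaches to each local component $\Pi_p$ a one-dimensional space of vectors fixed by a paramodular group of a well-defined level, whose product over all $p$ is the conductor of the $\GL_4$-transfer. To identify this level with $N$ one must rule out a conductor discrepancy at the primes of bad reduction, and this is where the hypothesis that $R=\End(A_L)$ is a hereditary order enters: a hereditary order in a quaternion algebra is an Eichler order of squarefree level, which forces $A$ to have at worst toric (Iwahori-type) behaviour at the ramified primes, hence $\rho_{A,\ell}|_{\Gal_{\QQ_p}}$ is minimally ramified and $\Pi_p$ lands in the semistable range of the Roberts--Schmidt classification, where the paramodular conductor equals the Artin conductor exponent. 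Combining the two points shows $A$ is paramodular.

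I expect the local analysis in the last step to be the main obstacle: matching the reduction type of a QM abelian surface with hereditary endomorphism order (through its Néron model over $\ZZ_p$), the ramification of $\rho_{A,\ell}|_{\Gal_{\QQ_p}}$, and Roberts--Schmidt's list of local paramodular newvectors requires a case analysis at the finitely many bad primes, including those dividing the discriminant of $\End^0(A_L)$. By comparison, the reduction to trivial central character is essentially a repackaging of Proposition~\ref{proposition:real-quadratic-field}, and genericity of $\Pi$ is immediate from the absolute irreducibility of $\rho_{A,\ell}$.
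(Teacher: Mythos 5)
Your proposal diverges from the paper's argument and has two genuine gaps. First, you never extract the actual content of the two hypotheses. In the paper, principal polarizability and the hereditarity of $\End(A_L)$ are used together, via \cite[Theorem~3.4]{MR2091964}, to force $\Gal(L/\QQ)\simeq C_2\times C_2$; then Proposition~\ref{proposition:real-quadratic-field} guarantees (since at most one quadratic subfield of $L$ has imaginary quadratic endomorphism field) that one can run the proof of Theorem~\ref{theorem:potentialQM-implies-paramodular} with a quadratic $K\subset L$ for which $\End^0(A_K)$ is \emph{real} quadratic, so that $\det\rho_\lambda=\chi_\ell$ and the Nebentypus is trivial. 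You instead assign the two hypotheses separate roles (polarization for the central character, hereditarity for local conductors) and relegate the ``remaining cases where only an imaginary quadratic $\End^0(A_K)$ occurs'' to an unspecified pairing argument; but the whole point of the hypotheses is that those cases do not arise, and without the $C_2\times C_2$ step your case distinction is not closed.

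Second, your endgame via Roberts--Schmidt newform theory does not work as stated. Since $\rho_{A,\ell}=\Ind_{\Gal_K}^{\Gal_\QQ}\rho_\lambda$, the transfer to $\GSp_4(\A_\QQ)$ is an endoscopic (Yoshida-type) lift; absolute irreducibility of the $4$-dimensional Galois representation does \emph{not} imply that the relevant automorphic representation is generic, and the existence of paramodular vectors for such lifts is precisely the nontrivial content of the results the paper invokes at this point (the main theorem of \cite{MR2887605} when $K$ is real quadratic, and \cite[Theorem~4.1]{MR3404028} when $K$ is imaginary quadratic), not something obtained from local newform theory plus conductor bookkeeping. Likewise, your claim that a hereditary order forces Iwahori-type reduction and minimally ramified local representations at the bad primes is unsubstantiated, and the level-matching analysis you flag as ``the main obstacle'' is indeed missing; in the paper no such local analysis is needed, because once the Nebentypus is trivial the cited lifting theorems give paramodularity directly.
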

\begin{proof}
  By \cite[Theorem~3.4]{MR2091964}, our hypotheses imply that
  $\Gal(L/\QQ)\simeq C_2\times C_2$, so the Proposition~\ref{proposition:real-quadratic-field} implies that we
  can take $K$ such that $\End^0(A_K)$ is real quadratic in the proof
  of Theorem~\ref{theorem:potentialQM-implies-paramodular}, so the
  Nebentypus of $\rho_{A,\lambda}$ is trivial. Now the
  result follows from \cite{MR2887605} (main theorem) when $K$ is real
  quadratic and from \cite[Theorem 4.1]{MR3404028} when $K$ is
  imaginary quadratic.
\end{proof}

\bibliographystyle{alpha}
\bibliography{references}

\end{document}